\documentclass{amsart}
\usepackage{amssymb,amsfonts}
\usepackage[all,arc]{xy}
\usepackage{enumerate}
\usepackage{mathrsfs}
\usepackage{hyperref}
\usepackage{wasysym}
\usepackage{mathtools}
\usepackage{setspace}
\usepackage{float}
\usepackage{tikz}
\usepackage{tikz-cd}
\usepackage{makecell}
\usepackage[style=alphabetic,sorting=nyt]{biblatex}
\usepackage{multirow}
\newtheorem{thm}{Theorem}[section]
\newtheorem{cor}[thm]{Corollary}
\newtheorem{prop}[thm]{Proposition}
\newtheorem{lem}[thm]{Lemma}

\theoremstyle{definition}
\newtheorem{defn}[thm]{Definition}

\newtheorem{exmp}[thm]{Example}

\newtheorem{rem}[thm]{Remark}

\newcommand{\mycomment}[1]{}
\newcommand{\cal}{\mathcal}
\newcommand{\bb}{\mathbb}

\DeclareMathOperator{\Hom}{Hom}
\DeclareMathOperator{\Tor}{Tor}

\DeclareMathOperator{\cok}{coker}

\DeclareMathOperator{\Spec}{Spec}

\DeclareMathOperator{\Kum}{Kum}
\DeclareMathOperator{\chara}{char}
\DeclareMathOperator{\Aut}{Aut}
\DeclareMathOperator{\End}{End}

\DeclareMathOperator{\et}{\acute{e}t}
\DeclareMathOperator{\GL}{GL}
\DeclareMathOperator{\cris}{cris}

\makeatletter
\let\c@equation\c@thm
\makeatother
\numberwithin{equation}{section}

\title[The good reduction of generalized Kummer surfaces]{The good reduction of generalized Kummer surfaces in the non-supersingular case}

\author{Tianchen Zhao}
\begin{document}
\begin{abstract}
    In this article, we study the good reduction of generalized Kummer surfaces, which are K3 surfaces obtained as minimal resolutions of quotients of abelian surfaces by finite groups. In particular, we establish a criterion for good reduction when the abelian surface has non-supersingular reduction and the group is cyclic. This extends the result of Lazda and Skorobogatov on Kummer surfaces.
\end{abstract}
\maketitle

\setcounter{tocdepth}{1}
\tableofcontents

Throughout the whole article, we fix the following notations.
\[\begin{array}{ll}
    \cal{O}_K & \mbox{a Henselian DVR}\\
    K & \mbox{the fraction field of $\cal{O}_K$ with characteristic $0$}  \\
    k & \mbox{the residue field of $\cal{O}_K$, assumed to be perfect} \\
    p & \mbox{the characteristic of $k$, assumed to be positive}\\
    \overline{K} & \mbox{a fixed algebraic closure of $K$} \\
    \overline{k} & \mbox{the residue field of $\overline{K}$, also algebraically closed} \\
    G_K, G_k & \mbox{the Galois groups of $\overline{K}/K$ and $\overline{k}/k$ respectively} \\
    \omega & \mbox{a fixed primitive cubic root of unity in $\overline{K}$} \\
    i & \mbox{a fixed primitive fourth root of unity in $\overline{K}$} \\
\end{array}\]
\begin{enumerate}
    \item Let $\Gamma$ be a group acting on a set $S$. Let $F$ be a field. We denote by $F^S$ the $F$-valued permutation representation of $\Gamma$ associated to $S$.
    \item Let $n>0$ be an integer. We denote by $\underline{n}$ the trivial $G_K$-set of cardinality $n$.
\end{enumerate}

\section{Introduction}

Let $A$ be an abelian surface over $K$. Let $G$ be a finite group acting on $A$. Suppose the minimal desingularization of $A/G$ is a K3 surface. Then we denote this K3 surface by $\Kum(A,G)$. In this article, we investigate when $\Kum(A,G)$ has good reduction.

The possible isomorphism types of such $G$ are classified in \cite[Corollary 3.17]{generalized_kummer_katsura}: $G$ is isomorphic to $C_2$, $C_3$, $C_4$, $C_6$, $Q_8$, $Q_{12}$, or $\text{SL}_2(\bb{F}_3)$. By the table in \cite[Section 3]{generalized_kummer_katsura}, in the non-cyclic case, $\Kum(A,G)$ is a singular K3 surface. The moduli space of singular K3 surfaces has dimension $0$, so singular K3 surfaces only exist in isolated examples. Therefore, we focus on the cyclic case. Let $\tau\in G$ be a generator and write $\Kum(A,\tau)=\Kum(A,G)$.

By Proposition \ref{twisting argument}, if $\Kum(A,\tau)$ has good reduction, we may twist $(A,\tau)$ by a $1$-cocycle class $\phi\in H^1(G_K,\Aut(A_{\overline{K}},\tau))$ such that $\Kum(A,\tau)=\Kum(A^{\phi},\tau^{\phi})$ and $A^{\phi}$ has good reduction. Therefore, we simply assume $A$ has good reduction.

We say we are in the tame case if $p\nmid \lvert G\rvert$. Otherwise, we are in the wild case. The tame case is simple: let $\cal{A}$ be the N\'eron model of $A$. The action of $G$ extends uniquely to $\cal{A}$ and restricts to $\cal{A}_k$. The singularities of $A/G$ and $\cal{A}_k/G$ are the same. Then we may desingularize $\cal{A}/G$ to get a smooth model of $\Kum(A,G)$. Therefore, we are left with the wild case.

We abuse notation and write $\tau$ also for its extension to $\cal{A}$ as well as the restriction to $\cal{A}_k$. Since we want to consider the wild case and $\tau$ has order $2$, $3$, $4$, or $6$, we are reduced to the following cases:
\begin{enumerate}
    \item $\tau$ has order $2$ and $p=2$.
    \item $\tau$ has order $3$ and $p=3$.
    \item $\tau$ has order $4$ and $p=2$.
    \item $\tau$ has order $6$ and $p=2$.
    \item $\tau$ has order $6$ and $p=3$.
\end{enumerate}

We say $\cal{A}_k$ is ordinary, almost ordinary, or supersingular if
\[
    \lvert \cal{A}[p](\overline{k})\rvert = p^r
\]
with $r=2, 1$, and $0$ respectively. Equivalently, we say $A$ has ordinary, almost ordinary, or supersingular good reduction.

Suppose $\tau$ has order $2$. By \cite[Lemma 3.5]{generalized_kummer_katsura}, $\tau$ coincides with the inversion automorphism of $A$, so $\Kum(A,\tau)=\Kum(A)$ is the usual Kummer surface. Then Lazda and Skorobogatov gave a criterion in the non-supersingular case for $\Kum(A)$ to have good reduction in \cite{non-supersingular-kummer}.

To state their results, let $G$ be any finite flat group scheme over $\cal{O}_K$. Then we have a short exact sequence of $G_K$-modules:
\begin{equation*}
    0\to G^\circ(\overline{K})\to G(\overline{K})\overset{\rho}{\to} G(\overline{k})\to 0.
\end{equation*}
This is a variation of the connected-\'etale sequence. For more details, see Section \ref{Cartier duality and the connected etale sequence}. This short exact sequence will appear throughout this article.

\begin{thm}\label{theorem usual kummer}\cite[Theorem 2, Theorem 3]{non-supersingular-kummer}
    Suppose $p=2$. Let $A$ be an abelian surface over $K$ with good reduction.
    \begin{enumerate}
        \item Suppose $A$ has ordinary good reduction. Then $\Kum(A)$ has good reduction if and only if there is a $G_K$-equivariant splitting to the short exact sequence
        \[
            0\to \cal{A}[2]^\circ(\overline{K})\to A[2](\overline{K})\to \cal{A}[2](\overline{k})\to 0.
        \]
        \item Suppose $A$ has almost ordinary good reduction. Then $\Kum(A)$ has good reduction if and only if $A[2](\overline{K})$ is trivial.
    \end{enumerate}
\end{thm}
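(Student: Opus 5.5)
The strategy is to reduce good reduction of $\Kum(A)$ to a statement about the Galois representation on étale (or crystalline) cohomology, and to compute that cohomology through $A[2]$. The basic tool is the Néron–Ogg–Shafarevich type criterion for K3 surfaces of Chiarellotto–Lazda–Liedtke, in the form used by Lazda–Skorobogatov. In residue characteristic $2$ it says: a K3 surface $X/K$ has good reduction, possibly after an unramified extension, if and only if $H^2_{\et}(X_{\overline{K}},\bb{Q}_2)$ is crystalline. For Kummer surfaces we have the $G_K$-equivariant decomposition
\[
H^2_{\et}(\Kum(A)_{\overline{K}},\bb{Q}_2)\cong H^2_{\et}(A_{\overline{K}},\bb{Q}_2)\oplus \bb{Q}_2(-1)^{A[2](\overline{K})}.
\]
Since $A$ has good reduction, the first summand is crystalline. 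The twisted Tate factor is crystalline if and only if the permutation representation $\bb{Q}_2^{A[2](\overline{K})}$ is crystalline, that is, unramified. This only gives the potential-unramified statement, and it is too weak: the real content is the integral structure. So I would follow Lazda–Skorobogatov and work with an explicit model instead.

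\emph{Sufficiency.} Let $\cal{A}$ be the Néron model. In the ordinary case, a $G_K$-equivariant splitting $s$ of the displayed sequence gives a $G_K$-stable subgroup $s(\cal{A}[2](\overline{k}))\subset A[2]$ complementary to the connected part. Its schematic closure $H\subset\cal{A}[2]$ is a finite étale subgroup scheme lifting the étale quotient. I would form $\cal{B}=\cal{A}/H$, or equivalently compose with an isogeny, so that the $2$-torsion of the new abelian scheme is as far from étale as possible. Then $\cal{B}/\{\pm1\}$ has reduction whose singularities are rational double points. Over the special fibre these are the configurations analysed by Shioda and Katsura for ordinary abelian surfaces in characteristic $2$, where the quotient is a K3 surface with $D_4^{\,}$/$D_8$-type singularities. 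A simultaneous resolution, after Artin's theorem and possibly an unramified base change followed by descent, yields a smooth model. In the almost ordinary case with $A[2](\overline{K})$ trivial as a Galois module, I would do the same with a full level-$2$ trivialisation.

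\emph{Necessity.} Assume $\Kum(A)$ has good reduction with smooth model $\cal{X}$. By the crystalline criterion and the comparison above, the exceptional curves $E_x$, $x\in A[2](\overline{K})$, span a Galois-stable sublattice of $\mathrm{NS}$ that specialises to $\cal{X}_{\overline{k}}$. Specialisation of $(-2)$-curves must then be compatible with the geometry of $\cal{A}_k/\pm1$. In the special fibre, the $2^r$ points of $\cal{A}[2](\overline{k})$ are the only singular points, each of which receives the $2^{2-r}$ curves $E_x$ with $\rho(x)$ equal to that point. Galois permutes the $E_x$ through its action on $A[2](\overline{K})$, while the reduction map $\rho$ sends $E_x$ to the configuration over $\rho(x)$. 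The plan is to show that in each fibre of $\rho$ a distinguished curve is singled out, for instance by its position in the $D$-type dual graph of the resolved singularity of the special fibre. Taking these distinguished curves gives a Galois-equivariant section of $\rho$, which is the required splitting. In the almost ordinary case, the singularities of the special fibre have type (Katsura) $D_8^{\,}$-ish with no symmetry. Every $E_x$ would then be individually distinguished, so Galois acts trivially on $A[2](\overline{K})$.

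I expect necessity to be the main obstacle, namely pinning down the specialisation of the sixteen $(-2)$-curves. I would first try to deduce it integrally. Use $H^2_{\cris}$ of $\cal{X}_k$ together with the $\bb{Z}_2$-lattice comparison (Fontaine–Laffaille fails at $p=2$, so use Breuil–Kisin modules) to show that the $\bb{Z}_2(-1)$-permutation part must be an unramified permutation module compatible with $\rho$. Failing that, I would use the explicit local equations of the singularities of $\cal{A}_k/\pm1$.
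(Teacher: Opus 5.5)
You have a genuine gap: you set aside the one tool that makes the argument work, and the substitutes you sketch do not close it. You are right that unramifiedness (or crystallinity) of $H^2$ only gives good reduction after an unramified extension. But the descent obstruction is exactly what the sharper Chiarellotto--Lazda--Liedtke criterion detects (Theorem \ref{comparison theorem}). Once $X$ has good reduction over an unramified extension, it has good reduction over $K$ if and only if $H^2_{\et}(X_{\overline{K}},\bb{Q}_\ell)\cong H^2_{\et}(X^\circ_{\overline{k}},\bb{Q}_\ell)$ as $G_K$-modules, where $X^\circ$ is the canonical reduction. The proof, which the present paper imitates, runs as follows.
\begin{enumerate}
\item $(\cal{A}/\pm1)_k\cong\cal{A}_k/\pm1$ has only rational double points. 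Artin's simultaneous resolution then gives potential good reduction, and the analogue of Lemma \ref{find the caonical reduction} identifies $X^\circ$ with the minimal resolution of $\cal{A}_k/\pm1$.
\item Decomposing $H^2$ on both sides reduces the problem to $\bb{Q}_\ell^{S_K}\cong\bb{Q}_\ell^{S_k}$, as in Proposition \ref{general criterion}. Here $S_K\cong A[2](\overline{K})$. In the ordinary case $S_k\cong\cal{A}[2]^\circ(\overline{K})\times\cal{A}[2](\overline{k})$, because the four curves over each $D_4^1$ point are indexed $G_k$-equivariantly by $\cal{A}^\vee[2](\overline{k})\cong\cal{A}[2]^\circ(\overline{K})$ via Cartier duality.
\item A group-theoretic lemma shows that such an isomorphism of permutation representations holds if and only if the sequence splits $G_K$-equivariantly.
\end{enumerate}
The canonical reduction, the computation of the Galois action on the special-fibre exceptional curves, and this group-theoretic step are all absent from your plan.

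The steps you do propose fail as stated.
\begin{itemize}
\item \emph{Sufficiency, the isogeny.} $\cal{A}/H$ is a different abelian scheme, and $\Kum(A/H_K)$ is in general not isomorphic to $\Kum(A)$, so it yields no model of $\Kum(A)$. Nor is an isogeny needed to get rational double points, since $\cal{A}_k/\pm1$ already has them.
\item \emph{Sufficiency, the descent.} The real difficulty is that Artin's resolution exists only after a possibly ramified extension. Descent is the whole content of the theorem, and you give no mechanism by which the splitting permits it.
\item \emph{Necessity, the specialisation.} The sixteen curves $E_x$ are pairwise orthogonal $(-2)$-classes. So the four over a $D_4$ point cannot specialise to the four vertices, since the central vertex $c$ meets the ends $a,b,d$. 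At best they go to an orthogonal frame such as $\{a,b,d,a+b+d+2c\}$, which contains a reducible class. Hence ``a distinguished curve singled out by its position in the dual graph'' is not meaningful as stated.
\item \emph{Necessity, the section.} Even granting a $G_K$-equivariant set-theoretic section of $\rho$, you would still have to show its image is a subgroup.
\item \emph{Almost ordinary case.} The $D_8$ diagram does have a non-trivial automorphism. So triviality of the Galois action on the special-fibre curves has to be proved, not read off the graph. Also, the fibres of $\rho$ have $2^{4-r}$ elements, not $2^{2-r}$.
\item \emph{Compatibility.} Making specialisation compatible with $G_K$ and with the map to $\cal{A}/\pm1$ requires comparing an arbitrary smooth model with Artin's resolution. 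That is precisely what the canonical reduction does.
\end{itemize}
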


For the supersingular case, we know that $\Kum(A)$ has potential good reduction by \cite[Theorem 1.2]{matsumoto_supersingularkummer}, but the criterion for good reduction remains open.

In this article, we adapt the method in \cite{non-supersingular-kummer} to solve the non-supersingular cases when $\tau$ has order $3$, $4$, or $6$. In particular, in these cases, $A$ must have ordinary good reduction by Corollary \ref{ordinary}.

\begin{thm}\label{main theorem}
    Let $A$ be an abelian surface over $K$ with ordinary good reduction. Let $\tau$ be an automorphism of $A$, such that the minimal desingularization $\Kum(A,\tau)$ of $A/\tau$ is a K3 surface.
    \begin{enumerate}
        \item Suppose $\tau$ has order $3$ and $p=3$. Then $\Kum(A,\tau)$ has good reduction if and only if $\omega\in K$ and there is a $G_K$-equivariant splitting to the short exact sequence
        \[
            0\to \cal{A}[1-\tau]^\circ(\overline{K})\to A[1-\tau](\overline{K})\to \cal{A}[1-\tau](\overline{k})\to 0.
        \]
        \item Suppose $\tau$ has order $4$ and $p=2$. Then $\Kum(A,\tau)$ has good reduction if and only if $i\in K$, $A[1-\tau](\overline{K})$ is trivial, and there is a $G_K\times\langle\tau\rangle$-equivariant splitting to the short exact sequence
        \[
            0\to \cal{A}[2]^\circ(\overline{K})\to A[2](\overline{K})\to \cal{A}[2](\overline{k})\to 0.
        \]
        \item Suppose $\tau$ has order $6$ and $p=2$. Then $\Kum(A,\tau)$ has good reduction if and only if there is a $G_K$-equivariant splitting to the short exact sequence
        \[
            0\to \cal{A}[2]^\circ(\overline{K})\to A[2](\overline{K})\to \cal{A}[2](\overline{k})\to 0.
        \]
        \item Suppose $\tau$ has order $6$ and $p=3$. Then $\Kum(A,\tau)$ has good reduction if and only if $\omega\in K$ and there is a $G_K$-equivariant splitting to the short exact sequence
        \[
            0\to \cal{A}[1-\tau^2]^\circ(\overline{K})\to A[1-\tau^2](\overline{K})\to \cal{A}[1-\tau^2](\overline{k})\to 0.
        \]
    \end{enumerate}
\end{thm}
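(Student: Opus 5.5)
The proof will follow the strategy of Lazda and Skorobogatov: a cohomological criterion for good reduction of K3 surfaces, combined with an explicit description of the transcendental/exceptional part of $H^2$ of $\Kum(A,\tau)$. By the Matsumoto/Chiarellotto--Lazda--Liedtke criterion, a K3 surface over $K$ with $p$ large enough, or in general with potential good reduction as here, has good reduction if and only if $H^2_{\et}(\Kum(A,\tau)_{\overline{K}},\bb{Q}_p)$ is crystalline. In small $p$ we need the refined version with an unramified/crystalline condition plus a condition ruling out the flop ambiguity; I would cite the version used in \cite{non-supersingular-kummer}. Potential good reduction of $\Kum(A,\tau)$ holds in the ordinary case, since after a finite extension one can realize the quotient in the tame-style construction of the introduction using the ordinary structure.

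We then decompose
\[
H^2(\Kum(A,\tau)_{\overline{K}},\bb{Q}_p)\cong H^2(A_{\overline{K}},\bb{Q}_p)^{\tau}\oplus \bigoplus_{E}\bb{Q}_p(-1)^{E},
\]
where $E$ runs over the exceptional curves over the singular points of $A/\tau$. The first summand is crystalline because $A$ has good reduction. So the problem is that the permutation representation $\bb{Q}_p(-1)^{S}$ of $G_K$ be crystalline, where $S$ is the $G_K$-set of exceptional curves. This holds iff $S$ is unramified, that is, $I_K$ acts trivially on $S$. Hence the main work is combinatorial and arithmetic.

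First, we describe $S$ via fixed points. For each $\tau^j$, the points of $A(\overline{K})$ with nontrivial stabilizer are $A[1-\tau^j](\overline{K})$. Katsura's tables tell us the singularity types ($A_2$, $A_3$, $A_1$, $A_5$, \dots) and hence the chains of exceptional curves. The $G_K$-action on $S$ is induced by the action on the orbits of these fixed points, together with the action on the two ends of each chain $A_n$ with $n\ge2$. That second action is the Galois action on the eigenvalues of $\tau$ on the tangent space, hence it is trivial iff $\omega\in K$, respectively $i\in K$. This is where the conditions $\omega\in K$ and $i\in K$ come from. Second, we translate ``$I_K$ acts trivially on $A[1-\tau](\overline{K})$ modulo $\tau$-orbits'' (and similarly for $A[2]$, $A[1-\tau^2]$) into the splitting conditions. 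By the ordinary hypothesis (Corollary \ref{ordinary}), $\cal{A}[n]^\circ$ is of multiplicative type and the étale quotient is unramified. The connected part $\cal{A}[1-\tau]^\circ(\overline{K})$ is a twist of $\mu_p$'s on which $I_K$ acts through the cyclotomic character. Triviality of inertia on the relevant subgroup forces the extension to split equivariantly, using that inertia acts on the subgroup by a character nontrivial on wild inertia when $\omega\notin K$. Conversely, a splitting gives an inertia-trivial complement. The one wrinkle is that unramified permutation sets can be upgraded to a $G_K$-splitting by a Galois-descent argument on the finite set.

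The main obstacle is the converse direction in each case: showing the explicit condition is really sufficient for good reduction, not just for crystallinity of $H^2$. In small characteristic the cohomological criterion may need an additional input. In that case I would construct the model directly: base change to the unramified extension trivializing $S$, use the splitting to form the quotient $\cal{A}/\langle\tau\rangle$, modify it by quotienting a suitable twist by the multiplicative subgroup via the splitting, and then resolve explicitly and check it descends. The case $\tau$ of order $4$, $p=2$ looks hardest. There $A[1-\tau]$ trivial is needed to avoid fixed points of $\tau$ itself, and the $\langle\tau\rangle$-equivariance of the splitting of $A[2]$ must be verified by hand on the $2$-torsion orbits.
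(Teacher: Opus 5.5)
There is a genuine gap. Your reduction replaces ``good reduction'' by ``$H^2$ is crystalline'', equivalently ``$I_K$ acts trivially on $S_K$''. That condition only characterizes good reduction after a finite \emph{unramified} extension (Liedtke--Matsumoto), which is not what the theorem asserts.

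Over $K$ itself you need the Chiarellotto--Lazda--Liedtke criterion (Theorem~\ref{comparison theorem}). It says $X$ has good reduction if and only if $H^2_{\et}(X_{\overline K},\bb{Q}_\ell)\cong H^2_{\et}(X^\circ_{\overline k},\bb{Q}_\ell)$ as $G_K$-representations, where $X^\circ$ is the canonical reduction. This compares the full $G_K$-actions, Frobenius included, and the theorem's conditions are strictly stronger than unramifiedness. In Example~\ref{order 2 counterexample}, $A[2]$ is defined over the unramified extension $\bb{Q}_2(\sqrt5)$, so $S_K$ is unramified and $H^2$ is crystalline. Yet $\Kum(A,\tau)$ has no good reduction over $\bb{Q}_2$, because the sequence does not split $G_K$-equivariantly. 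Example~\ref{order 3 counterexample} does the same for $p=3$.

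Likewise, inertia acting trivially on $\{\omega,\omega^2\}$ only says $K(\omega)/K$ is unramified, not that $\omega\in K$; take for instance $K=\bb{Q}_3(\sqrt3)$. So your ``Galois-descent upgrade'' from an unramified permutation set to a $G_K$-splitting is false. Your stated equivalences with $\omega\in K$ or $i\in K$ do not follow from your own criterion. Your fallback of building a model over the unramified extension and ``checking it descends'' is exactly the step that fails in these non-examples.

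The missing ingredient is the special fibre. The paper first shows $\cal{A}_k/\tau\cong(\cal{A}/\tau)_k$ has only rational double points. It then applies Artin's simultaneous resolution to get potential good reduction (Theorem~\ref{potential good redn}); your ``tame-style construction'' does not work in the wild case. Next it identifies $X^\circ$ with the minimal resolution of $\cal{A}_k/\tau$. This turns the CLL criterion into $\bb{Q}_\ell^{S_K}\cong\bb{Q}_\ell^{S_k}$ as $G_K$-representations (Proposition~\ref{general criterion}).

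The real content is then computing $S_k$ with its Galois action. The special fibre has wild singularities $3E_6^1$, $2E_7^3+D_4^1$, $E_6^1+4A_2+D_4^1$ and $E_7^1+E_6^1+5A_1$, which do not come from Katsura's tables. Their components are permuted through $D(\cal{A}[1-\tau]^\circ)(\overline k)$ or $\cal{A}[2]^\circ(\overline K)$, and this is where $\cal{A}[\,\cdot\,]^\circ(\overline K)$ enters. The splitting conditions come from matching $S_K$ against $S_k$: for example, $\bb{Q}_\ell^{A[1-\tau](\overline K)}\cong\bb{Q}_\ell^{V_1\oplus V_2}$ forces a splitting via $H^1(G_V,R_u(P))=0$. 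The conditions $\omega\in K$, $i\in K$ and $A[1-\tau](\overline K)=0$ come from counting fixed points of individual $\sigma\in G_K$ on $S_K$ versus $S_k$.

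Finally, your remark that inertia acts on $\cal{A}[1-\tau]^\circ(\overline K)$ by a character ``nontrivial on wild inertia'' is wrong. That action is the mod-$3$ cyclotomic character twisted by an unramified character, has order at most $2$, and is tame.
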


\noindent \textbf{Acknowledgements.} I would like to express my sincere gratitude to my supervisor, Christopher Lazda, for the invaluable guidance and support throughout this work. I am also grateful to Alvaro Gonzalez Hernandez for the helpful discussions on generalized Kummer surfaces in positive characteristics, and to Ken Lee for the assistance with crystalline cohomology. Last but not least, I gratefully acknowledge the financial support provided by the University of Exeter.

\section{Preliminaries}

In this section, we introduce the preliminaries of the article.

\subsection{Katsura's criterion}

We first review Katsura's criterion characterizing when the minimal desingularization of $A/G$ is a K3 surface. Let $F$ be a field and let $A$ be an abelian surface over $F$. Let $G$ be a finite group acting on $A$.

We say a curve $C\subset A$ is a fixed curve of $G$ if there exists $g\in G$ different from the identity, such that $g$ is trivial on $C$. 

\begin{thm}\cite[Theorem 2.4]{generalized_kummer_katsura}\label{Katsura's first criterion}
    Assume $\chara F\neq 2$. Let $A$ be an abelian surface over $F$ and let $G$ be a finite group acting on $A$. Then the minimal desingularization of $A/G$ is a K3 surface if and only if the following conditions hold:
    \begin{enumerate}
        \item the $G$-action has no fixed curves;
        \item there exists some $g\in G$ with isolated fixed points;
        \item all the singularities of $A/G$ are rational double points;
        \item the $G$-action is symplectic, i.e. $G$ acts trivially on $H^0(A,\omega_A)$.
    \end{enumerate}
\end{thm}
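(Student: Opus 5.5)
The plan is to prove the two implications separately. In both directions the basic tool is the canonical bundle formula for the quotient map and for the minimal resolution. Write $\pi\colon A\to A/G$ and $f\colon X\to A/G$ for the minimal desingularization.

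\emph{Sufficiency.} Assume (1)--(4). By (1), every nontrivial $g\in G$ has only isolated fixed points. Hence $\pi$ is étale in codimension one, and $K_A=\pi^*K_{A/G}$ as $\mathbb{Q}$-divisors. Since $K_A=0$, the divisor $K_{A/G}$ is numerically trivial. By (4), a generator of $H^0(A,\omega_A)$ is $G$-invariant, so it descends to a nowhere vanishing $2$-form on the smooth locus of $A/G$. This gives $\omega_{A/G}\cong\mathcal{O}_{A/G}$ on the normal surface $A/G$.

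Condition (3) says the singularities are rational double points. The minimal resolution of such a point is crepant, so $K_X=f^*K_{A/G}=0$.

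It remains to show $H^1(X,\mathcal{O}_X)=0$, so that $X$ is K3 rather than abelian, and this uses (2). Rational singularities give $H^1(X,\mathcal{O}_X)=H^1(A/G,\mathcal{O}_{A/G})$. I would identify the latter with $H^1(A,\mathcal{O}_A)^G$. This needs care when $p\mid|G|$: there I would instead compute $b_1(X)$ via étale cohomology, or compare Euler characteristics. The element $g$ with isolated fixed points acts on $H^1(A,\mathcal{O}_A)$ without eigenvalue $1$. Otherwise $g$ would fix a one-dimensional abelian subvariety direction, and then $1-g$ would not be an isogeny, contradicting the finiteness of the fixed locus $\ker(1-g)$. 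So the invariants vanish, $q(X)=0$, and $X$ is K3.

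\emph{Necessity.} Assume $X$ is K3.
\begin{itemize}
\item If some $g$ fixed a curve, then $\pi$ would ramify along a divisor. Hurwitz would give $K_{A/G}\cdot H<0$ for an ample $H$. Since $f$ only contracts curves over points, $K_X$ would then fail to be nef, contradicting $K_X=0$. So (1) holds.
\item Given (1), $K_{A/G}\equiv0$ and $K_X=f^*K_{A/G}-\sum a_iE_i$ with $a_i\ge0$ (minimality). Then $K_X=0$ forces every $a_i=0$. So the singularities are canonical Gorenstein surface singularities, i.e.\ rational double points, which is (3).
\item For (4), $H^0(X,\omega_X)\ne0$ pulls back to a nonzero $G$-invariant $2$-form on $A$. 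Hence $G$ acts trivially on the one-dimensional space $H^0(A,\omega_A)$.
\item For (2), suppose no element has isolated fixed points. Given (1), every $g\ne1$ would then act freely, so $\pi$ is étale. Then $A/G$ is smooth with $\chi(\mathcal{O})=0$, whereas a K3 surface has $\chi(\mathcal{O}_X)=2$. So $G$ is trivial or (2) holds. The trivial group is excluded because $X$ is K3 and not abelian.
\end{itemize}

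\emph{Main obstacle.} I expect the hardest step to be the wild case $\chara F\mid|G|$. There taking invariants of coherent cohomology is not exact, and the quotient of a normal surface need not be Cohen--Macaulay in general. Here I would use that $A/G$ is a normal surface, hence Cohen--Macaulay, and argue with Euler characteristics and the Leray spectral sequence for $\pi_*\mathcal{O}_A$, whose invariant part is $\mathcal{O}_{A/G}$. The hypothesis $\chara F\ne2$ enters through the classification of rational double points and through excluding quasi-elliptic pathologies in the Enriques classification.
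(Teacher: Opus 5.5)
The paper gives no proof of this statement; it is quoted from Katsura, so I am judging your argument on its own. Your necessity direction is fine. So is the first half of sufficiency: (1) makes $\pi$ étale in codimension one, the invariant $2$-form gives $\omega_{A/G}\cong\mathcal{O}_{A/G}$, and crepancy of the resolution of rational double points gives $\omega_X\cong\mathcal{O}_X$.

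\textbf{The gap} is the step $H^1(X,\mathcal{O}_X)=0$ when $\chara F$ divides $\lvert G\rvert$, which is exactly the case this paper needs. You assert that the element $g$ with isolated fixed points has no eigenvalue $1$ on $H^1(A,\mathcal{O}_A)$. This is false in that case. If $g$ has order $p^a$ with $p=\chara F$, then $(g-1)^{p^a}=g^{p^a}-1=0$ on any $F$-vector space, so $g$ is unipotent. For example, take $\tau$ of order $3$ on an ordinary abelian surface in characteristic $3$. Then $H^1(A,\mathcal{O}_A)^{\tau}\neq 0$, while $\Kum(A,\tau)$ is K3, so $H^1(A/\tau,\mathcal{O})=0$. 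Thus the identification $H^1(A/G,\mathcal{O}_{A/G})\cong H^1(A,\mathcal{O}_A)^G$ is not merely delicate but wrong. Your geometric justification confuses $\ker(1-g)$ with the kernel of its differential. An eigenvalue $1$ on $H^1(A,\mathcal{O}_A)=\mathrm{Lie}(A^\vee)$ only means the relevant isogeny is inseparable. The Euler characteristic/Leray sketch does not repair this. Without a Reynolds operator, $\mathcal{O}_{A/G}=(\pi_*\mathcal{O}_A)^G$ need not be a direct summand, and nothing in the sketch computes $h^1(\mathcal{O}_{A/G})$.

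\textbf{The fix.} The $\ell$-adic fallback you mention is the right route, but it must be run on $V_\ell A$ and then completed by the classification.
\begin{enumerate}
\item Write $g=t_a\circ h$ with $h$ a homomorphism. Then $g$ has finitely many fixed points iff $1-h$ is an isogeny iff $\det(1-h\mid V_\ell A)=\deg(1-h)\neq 0$. Since translations act trivially on cohomology, $H^1_{\et}(A_{\overline{F}},\mathbb{Q}_\ell)^G=0$.
\item We have $R^1f_*\mathbb{Q}_\ell=0$ for rational double points, and the $\ell$-adic cohomology of $A_{\overline{F}}/G$ is the $G$-invariants in every characteristic. Hence $b_1(X)=0$.
\item In characteristic $p$ this does not yet give $h^1(\mathcal{O}_X)=0$. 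You need Bombieri--Mumford: a surface with $\omega_X\cong\mathcal{O}_X$ and $b_1=0$ is either K3 or an Enriques surface with trivial canonical bundle (singular or supersingular, with $b_2=10$ and $h^1(\mathcal{O})=1$). The latter exist only in characteristic $2$.
\end{enumerate}
This is precisely where $\chara F\neq 2$ is used, not in the classification of rational double points or in quasi-elliptic phenomena. With this step supplied, the rest of your argument goes through.
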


\begin{defn}
    We say the $G$-action on $A$ is rigid if for every $g\in G$ different from the identity, $g$ only has finitely many fixed points on $A$. Similarly, we say an automorphism $\tau$ of finite order of $A$ is rigid if the action of $\langle\tau\rangle$ on $A$ is rigid.
\end{defn}

We may rephrase Theorem \ref{Katsura's first criterion} as follows using \cite[Lemma 3.4]{generalized_kummer_katsura} and \cite[Proposition 3.3]{rybakov_generalized_kummer}:

\begin{thm}\label{Katsura's criterion}
    Assume $\chara F\neq 2$. Let $A$ be an abelian surface over $F$ and let $G$ be a finite group acting on $A$. Then the minimal desingularization of $A/G$ is a K3 surface if and only if the following conditions hold:
    \begin{enumerate}
        \item the $G$-action is rigid;
        \item the $G$-action is symplectic;
        \item $A/G$ is singular with rational double points.
    \end{enumerate}
\end{thm}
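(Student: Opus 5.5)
We derive Theorem \ref{Katsura's criterion} from Theorem \ref{Katsura's first criterion} by showing that the two lists of conditions are equivalent. The symplectic condition appears in both, so we only compare the rest. First suppose conditions (1)--(4) of Theorem \ref{Katsura's first criterion} hold. Take a nontrivial $g\in G$ and let $A^g$ be its fixed locus. If $A^g$ were not finite, it would be a positive-dimensional closed subscheme of the surface $A$. Its one-dimensional part would then contain a curve on which $g$ acts trivially, that is, a fixed curve, contradicting (1). Hence the action is rigid. Condition (2) gives some $g$ with isolated fixed points, so the images of these points are non-free points of $A/G$. Since the action is symplectic and has no fixed curves, these images should be genuine singular points; we justify this with \cite[Lemma 3.4]{generalized_kummer_katsura}, which says a symplectic rigid action in characteristic $\neq 2$ produces singular images of fixed points. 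Together with (3), this gives that $A/G$ is singular with only rational double points.

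Conversely, assume the action is rigid, symplectic, and that $A/G$ is singular with rational double points. Rigidity means every nontrivial $g$ has finite fixed locus, so no nontrivial element fixes a curve pointwise; this is (1). Since $A/G$ is singular, some point of $A$ has nontrivial stabilizer: if every stabilizer were trivial, the quotient map would be étale and $A/G$ would be smooth. (In the wild case the map could still fail to be étale for scheme-theoretic reasons, but with $G$ a constant group acting with trivial geometric stabilizers the quotient is étale.) Any nontrivial $g$ in such a stabilizer has fixed points, and these are isolated by rigidity; this is (2). Condition (3) is assumed directly, and (4) is the symplectic hypothesis.

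The main obstacle is the step ``isolated fixed point $\Rightarrow$ singular image'' in the first direction, where $p$ may divide the stabilizer order. In the tame case this is Chevalley--Shephard--Todd: a symplectic element acting on a two-dimensional tangent space has no pseudo-reflections, so the invariant ring is not regular. In the wild case we instead cite \cite[Proposition 3.3]{rybakov_generalized_kummer}, which covers exactly this (the quotient by a group acting with isolated fixed points and no fixed curves is singular at the images), and \cite[Lemma 3.4]{generalized_kummer_katsura} for the link between absence of fixed curves and rigidity. An alternative for this direction is to avoid the singularity claim altogether: if $A/G$ were smooth, the quotient map $A\to A/G$ would be unramified in codimension one, since there are no fixed curves. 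Then $\omega_{A/G}$ would be trivial and $A/G$ would already be a K3 surface or an abelian surface. We would rule this out by a second Betti number or Euler characteristic count, using the existence of a fixed point.
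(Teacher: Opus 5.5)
Your overall strategy, deriving the statement from Theorem \ref{Katsura's first criterion} by matching the two lists of conditions, is what the paper does. The paper gives no argument beyond citing \cite[Lemma 3.4]{generalized_kummer_katsura} and \cite[Proposition 3.3]{rybakov_generalized_kummer}. Most of your verification is fine:
\begin{itemize}
\item having no fixed curves is equivalent to rigidity;
\item a free action gives an \'etale quotient map, so a singular $A/G$ forces some nontrivial element to have a fixed point, which is isolated by rigidity;
\item in the tame case, Chevalley--Shephard--Todd shows that the image of an isolated fixed point is singular.
\end{itemize}

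The gap is the remaining step: that $A/G$ is singular when $p$ divides a stabilizer order. This is exactly the wild situation the paper is about, and there you offer only attributions. In this paper, \cite[Proposition 3.3]{rybakov_generalized_kummer} is used for the link between rigidity and the absence of the eigenvalue $1$ on $V_\ell$ and $H^1_{\et}$ (see the proofs of Corollary \ref{ordinary} and Proposition \ref{cohomology computation}). It is not used for singularities of wild quotients, and nothing supports the content you ascribe to \cite[Lemma 3.4]{generalized_kummer_katsura}. Your fallback is also unsound as written. Being unramified in codimension one only gives $\omega_A\cong\pi^*\omega_{A/G}$ for the quotient map $\pi$, so $\omega_{A/G}$ is merely torsion and ``K3 or abelian'' does not follow. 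In addition, Euler characteristic counts for wildly ramified maps acquire Swan conductor terms.

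The step is easy to close in a characteristic-free way, and without the symplectic hypothesis. There are two ways to do it.
\begin{itemize}
\item \emph{Global argument.} In this direction, Theorem \ref{Katsura's first criterion} already says the minimal desingularization of $A_{\overline F}/G$ is a K3 surface. If $A_{\overline F}/G$ were smooth, it would be that K3 surface and so have $b_2=22$. But $H^2_{\et}(A_{\overline F}/G,\bb{Q}_\ell)\cong H^2_{\et}(A_{\overline F},\bb{Q}_\ell)^G$, which has dimension at most $6$. (The argument of Lemma \ref{quotient cohomology} works for any finite group, since the geometric fibres of the quotient map are $G$-orbits.)
\item \emph{Local argument.} Suppose $x$ has nontrivial stabilizer $G_x$ and its image $y$ is a regular point. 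No nontrivial element fixes a curve pointwise, so all inertia groups at codimension-one points are trivial. Hence $A\to A/G$ is \'etale in codimension one over a neighbourhood of $y$. Zariski--Nagata purity of the branch locus then makes it \'etale at $x$. This forces $\widehat{\cal{O}}_{A,x}^{G_x}=\widehat{\cal{O}}_{A/G,y}\cong\widehat{\cal{O}}_{A,x}$, so $G_x$ is trivial, a contradiction.
\end{itemize}
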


Now we return to the good reduction problem. Let $A$ be an abelian surface over $K$ with good reduction. Let $\tau$ be an automorphism of $A$ of order $3$, $4$, or $6$, such that the minimal desingularization $\Kum(A,\tau)$ of $A/\tau$ is a K3 surface. Suppose we are in the wild case. Let $\ell\neq p$ be a prime number.

\begin{prop}\label{min poly}
    The automorphisms induced by $\tau$ on $V_\ell(A)$ and on $V_\ell(\cal{A}_k)$ have the same characteristic polynomial and minimal polynomial:
    \begin{enumerate}
        \item $(X^2+X+1)^2$ and $X^2+X+1$ if $\tau$ has order $3$,
        \item $(X^2+1)^2$ and $X^2+1$ if $\tau$ has order $4$,
        \item $(X^2-X+1)^2$ and $X^2-X+1$ if $\tau$ has order $6$.
    \end{enumerate}
\end{prop}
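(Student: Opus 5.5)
We first reduce to the generic fibre. The specialization map identifies $T_\ell(A)$ with $T_\ell(\mathcal{A}_k)$, since $\mathcal{A}[\ell^n]$ is finite étale over the Henselian base $\mathcal{O}_K$ and $\ell\neq p$. This identification is equivariant for $\tau$, because $\tau$ extends to $\mathcal{A}$ and restricts to $\mathcal{A}_k$. Hence the two automorphisms of the $4$-dimensional $\mathbb{Q}_\ell$-spaces are conjugate, and it suffices to compute the polynomials for $\tau$ on $V_\ell(A)$. We may also base change to $\overline{K}$, where we are in characteristic $0$ and can use complex-analytic or Hodge-theoretic arguments via an embedding $\overline{K}\hookrightarrow\mathbb{C}$.

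Let $n\in\{3,4,6\}$ be the order of $\tau$, and write $\Phi_n$ for the $n$-th cyclotomic polynomial, so that $\Phi_3=X^2+X+1$, $\Phi_4=X^2+1$ and $\Phi_6=X^2-X+1$. The characteristic polynomial $P$ of $\tau$ on $V_\ell(A)$ has integer coefficients, does not depend on $\ell$, and has degree $4$. Since $\tau^n=1$, its roots are $n$-th roots of unity.

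We now use Theorem \ref{Katsura's criterion} over $\overline{K}$, whose characteristic $0\neq 2$. The action is rigid, so no nontrivial power $\tau^j$ fixes a curve. We claim this forces $1$ not to be an eigenvalue of $\tau^j$ for $0<j<n$. Indeed, if $\tau^j$ had eigenvalue $1$ on $V_\ell$, then $1-\tau^j$ would not be an isogeny. Its kernel would then have positive dimension, and a translate of an elliptic curve, or of the whole of $A$, would consist of fixed points, contradicting rigidity. Conversely, if $1-\tau^j$ is an isogeny, then the fixed locus of $\tau^j$ is finite. It follows that every eigenvalue of $\tau$ is a primitive $n$-th root of unity.

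For $n=3,4,6$ the primitive $n$-th roots of unity are exactly the two roots of $\Phi_n$. Since $P$ is rational of degree $4$, we get $P=\Phi_n^2$. The minimal polynomial divides $X^n-1$, which has distinct roots, so $\tau$ is semisimple. The minimal polynomial is therefore the product of the distinct irreducible factors of $P$ over $\mathbb{Q}_\ell$. Since it must have rational coefficients (it is the minimal polynomial of $\tau$ in $\mathrm{End}^0(A)$), it equals $\Phi_n$. Alternatively, $\Phi_n(\tau)$ is semisimple with only zero eigenvalues, hence is zero.

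The main obstacle is the rigidity-to-eigenvalue step, where we must check that an eigenvalue $1$ of $\tau^j$ really produces a fixed curve. To handle it, consider the endomorphism $1-\tau^j$ of $A$. If it is not an isogeny, its image is an abelian subvariety $B$ of dimension less than $2$, and the connected component of its kernel is an abelian subvariety of dimension $2-\dim B\geq 1$. That component is fixed pointwise by $\tau^j$. It is either all of $A$, which is impossible since $\tau^j\neq 1$, or an elliptic curve, which is a fixed curve. Either way this contradicts rigidity, as required.
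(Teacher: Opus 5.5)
Your proof is correct, and it reaches the characteristic polynomial by a different route from the paper.

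\textbf{What is shared.} Both arguments begin with the $\tau$-equivariant identification $V_\ell(A)\cong V_\ell(\cal{A}_k)$. Both finish the minimal polynomial in essentially the same way: the paper takes the gcd of $X^n-1$ with $\Phi_n^2$, while you use separability of $X^n-1$ to get semisimplicity.

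\textbf{Where you differ.} The paper reads off $\Phi_n^2$ directly from the table in Katsura's classification. You derive it from rigidity alone:
\begin{itemize}
\item finiteness of $\ker(1-\tau^j)$ makes $1-\tau^j$ an isogeny, hence invertible on $V_\ell$;
\item so no $\tau^j$ with $0<j<n$ has eigenvalue $1$, and every eigenvalue of $\tau$ is a primitive $n$-th root of unity;
\item rationality of the characteristic polynomial together with $\varphi(n)=2$ then forces $\Phi_n^2$.
\end{itemize}

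\textbf{What each route buys.} The paper's route is shorter but depends on the external classification table. Yours is self-contained modulo Theorem \ref{Katsura's criterion} and standard facts from Milne (integrality of the characteristic polynomial, and isogenies acting invertibly on $V_\ell$). It never uses symplecticity. It also gives an elementary proof of the ``no eigenvalue $1$'' fact that the paper later imports from Rybakov in Corollary \ref{ordinary} and Proposition \ref{cohomology computation}. It explains structurally why the answer is $\Phi_n^{4/\varphi(n)}$; for instance, for $n=2$ it immediately gives $\tau=-1$.

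One small point: your remark about embedding $\overline{K}$ into $\mathbb{C}$ is never used and can be dropped.
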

\begin{proof}
    Since $A$ has good reduction, there is a $\tau$-equivariant isomorphism $V_\ell(A)\cong V_\ell(\cal{A}_k)$. This proves that $\tau$ on $V_\ell(A)$ and on $V_\ell(\cal{A}_k)$ have the same characteristic polynomial and minimal polynomial. By the table in \cite[Section 3]{generalized_kummer_katsura}, the characteristic polynomial of $\tau$ on $V_\ell(A)$ is $(X^2+X+1)^2$, $(X^2+1)^2$, and $(X^2-X+1)^2$ respectively when $\tau$ has order $3$, $4$, and $6$.

    Suppose $\tau$ has order $3$. Then its minimal polynomial divides both $X^3-1$ and $(X^2+X+1)^2$. Therefore, the minimal polynomial is $X^2+X+1$. Similarly, we obtain the minimal polynomial of $\tau$ in the remaining cases.
\end{proof}

\begin{cor}\label{key identities of tau}
    \begin{enumerate}
        \item Suppose $\tau$ has order $3$. Then
        \[
            \tau^2+\tau+1=0.
        \]
        \item Suppose $\tau$ has order $4$. Then
        \[
            \tau^2+1=0.
        \]
        \item Suppose $\tau$ has order $6$. Then
        \[
            \tau^2-\tau+1=0.
        \]
        In particular, $\tau^3=-1$ and $\tau^4+\tau^2+1=0$.
    \end{enumerate}
\end{cor}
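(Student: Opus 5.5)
Proposition \ref{min poly} gives the minimal polynomial of $\tau$ acting on $V_\ell(A)$ and on $V_\ell(\cal{A}_k)$, and we want to turn this into an identity in $\End(A)$, in $\End(\cal{A})$ and in $\End(\cal{A}_k)$. The key fact is that $\End(B)\otimes\bb{Z}_\ell\to\End(T_\ell(B))$ is injective for an abelian variety $B$ over any field and any prime $\ell$ different from the characteristic. Let $f$ be the relevant minimal polynomial: $X^2+X+1$, $X^2+1$ or $X^2-X+1$. Then $f(\tau)$ is an element of $\End(A)$ (respectively $\End(\cal{A}_k)$) that acts as zero on $V_\ell$, hence on $T_\ell$, which is torsion-free and sits inside $V_\ell$. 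By injectivity, $f(\tau)=0$.

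For the N\'eron model $\cal{A}$, I would argue via the N\'eron mapping property. Restriction to the generic fibre gives an isomorphism $\End(\cal{A})\cong\End(A)$, so the identity on $\cal{A}$ follows from the one on $A$. It then restricts to the identity on $\cal{A}_k$, which agrees with the direct argument above.

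The remaining claims in (3) are pure algebra. Multiplying $\tau^2-\tau+1=0$ by $\tau+1$ gives $\tau^3+1=0$. Multiplying by $\tau^2+\tau+1$ gives $(\tau^2+1)^2-\tau^2=\tau^4+\tau^2+1=0$.

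The only point that needs care is the injectivity of $\End(B)\otimes\bb{Z}_\ell\to\End_{\bb{Z}_\ell}(T_\ell B)$. This is the standard result of Tate and Mumford, valid over any field for $\ell\neq\chara$, so I would simply cite it. There is also a more elementary alternative: $f(\tau)$ kills $B[\ell^n]$ for every $n$, so it is divisible by $\ell^n$ in $\End(B)$ for every $n$. Since $\End(B)$ is a finitely generated free $\bb{Z}$-module, this forces $f(\tau)=0$.
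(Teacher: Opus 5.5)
Your proof is correct and follows the paper's argument. The paper likewise combines the faithfulness of $\End(A)$ on $T_\ell(A)$ (citing Milne, Lemma 12.2) with the minimal polynomials from Proposition \ref{min poly}. Your extra treatment of $\cal{A}$ (via the N\'eron mapping property) and of $\cal{A}_k$, and your explicit factorizations $(\tau+1)(\tau^2-\tau+1)=\tau^3+1$ and $(\tau^2+\tau+1)(\tau^2-\tau+1)=\tau^4+\tau^2+1$, spell out steps the paper leaves implicit.
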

\begin{proof}
    By \cite[Lemma 12.2]{Abelian_variety_milne_book}, the representation of $\End(A)$ on $T_\ell(A)$ is faithful. Then it follows from Proposition \ref{min poly}.
\end{proof}

\begin{cor}\label{ordinary}
    The abelian surface $A$ has either ordinary or supersingular good reduction.
\end{cor}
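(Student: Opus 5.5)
We need to show that the $p$-rank of $\cal{A}_k$ is not $1$. The plan is to suppose $\cal{A}_k$ is almost ordinary and derive a contradiction from the action of $\tau$ on the étale part of the $p$-divisible group (equivalently on the slope-$0$ part of the Dieudonné module). We are in the wild case, so $p$ divides $\lvert G\rvert$: either $p=3$ with $\tau$ of order $3$ or $6$, or $p=2$ with $\tau$ of order $4$ or $6$.

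First, the $p$-adic Tate module $T_p(\cal{A}_{\overline{k}}) = \varprojlim \cal{A}[p^n](\overline{k})$ is a free $\bb{Z}_p$-module of rank equal to the $p$-rank $r$. The endomorphism $\tau$ of $\cal{A}_k$ acts on it. By Corollary \ref{key identities of tau}, which holds in $\End(A)=\End$ of the Néron model and hence in $\End(\cal{A}_k)$, $\tau$ satisfies $\Phi(\tau)=0$. Here $\Phi$ is one of $X^2+X+1$, $X^2+1$, $X^2-X+1$. Consequently $\tau$ acting on $T_p$ also satisfies $\Phi$.

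Now suppose $r=1$. Then $\tau$ acts on $T_p\cong\bb{Z}_p$ by a scalar $a\in\bb{Z}_p$ with $\Phi(a)=0$. Since $\Phi$ is irreducible of degree $2$ over $\bb{Q}$, we check case by case that it has no root in $\bb{Q}_p$ for the relevant $p$:
\begin{enumerate}
    \item $X^2+X+1$ over $\bb{Q}_3$: its roots generate $\bb{Q}_3(\sqrt{-3})$, which is ramified and hence not $\bb{Q}_3$.
    \item $X^2+1$ over $\bb{Q}_2$: $-1$ is not a square in $\bb{Q}_2$, since $-1\not\equiv 1 \pmod 8$.
    \item $X^2-X+1$ over $\bb{Q}_2$ or $\bb{Q}_3$: the splitting field is again $\bb{Q}_p(\sqrt{-3})$. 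This is unramified quadratic for $p=2$, because $-3\equiv 5\pmod 8$ is not a square, and ramified for $p=3$. In neither case is it $\bb{Q}_p$.
\end{enumerate}
This contradiction shows $r\neq 1$, so $\cal{A}_k$ is ordinary ($r=2$) or supersingular ($r=0$).

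The main subtlety is ensuring that the identity $\Phi(\tau)=0$ really holds on the $p$-adic side, not only on $V_\ell$. The order of $\tau$ being divisible by $p$ makes no difference here: Corollary \ref{key identities of tau} is an identity in $\End(A)$. Moreover, $\End(A)\to\End(\cal{A}_k)$ is an injective ring map by the Néron property and good reduction, so the identity transfers to $\cal{A}_k$ and then to $\cal{A}[p^\infty]^{\et}$.

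There is one alternative route, should one prefer to work over $\overline{k}$ rather than with $T_p$ directly. The same argument applies to the étale quotient of $\cal{A}_{\overline{k}}[p^\infty]$, whose Tate module is $T_p$ as above, so nothing changes.
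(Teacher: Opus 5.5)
Your proof is correct, and its core mechanism is the paper's: the action of $\tau$ on $T_p(\cal{A}_{\overline{k}})$ factors through $\bb{Z}_p[X]/(\Phi)$, and $\Phi$ is irreducible over $\bb{Q}_p$ in every wild case, so the $\bb{Z}_p$-rank of $T_p$ cannot be $1$.

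The routes differ mainly in how the module structure is obtained.

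\begin{itemize}
\item \textbf{The paper.} It first transfers rigidity of $\tau$ from $A$ to $\cal{A}_k$ via \cite[Proposition 3.3]{rybakov_generalized_kummer}. It then invokes \cite[Proposition 2.3.2]{Alvaro_thesis} to rule out the eigenvalue $1$ for non-trivial powers of $\tau$ on $T_p$. From this it regards $T_p$ as a module over the DVR $\bb{Z}_p[\zeta_n]$ and uses freeness to conclude that the rank is even.
\item \textbf{Your proof.} You get $\Phi(\tau)=0$ on $\cal{A}_k$ directly from Corollary \ref{key identities of tau}. This works because $\End(A)=\End(\cal{A})\to\End(\cal{A}_k)$ is a ring map; its injectivity is not actually needed. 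This makes the rigidity transfer and the external eigenvalue result unnecessary.
\end{itemize}

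You also check explicitly that $\Phi$ has no root in $\bb{Q}_p$. The paper leaves this implicit when it asserts that $\bb{Z}_p[\zeta_n]$ is a DVR, necessarily of rank $2$ over $\bb{Z}_p$. Your case check is sound: $-3$ has odd $3$-adic valuation, and $-1\equiv 7$ and $-3\equiv 5 \pmod 8$ are non-squares in $\bb{Z}_2$.

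Your argument also shows why the order-$2$ case must be excluded: there $\Phi=X+1$ has the root $-1\in\bb{Z}_p$, consistent with almost ordinary reduction occurring for ordinary Kummer surfaces.

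The paper's phrasing (freeness over $\bb{Z}_p[\zeta_n]$) is slightly more conceptual. Your version is more self-contained and uses only results already established in the paper.
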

\begin{proof}
    We assume $\cal{A}_k$ is non-supersingular and prove that $\cal{A}_k$ is ordinary. Let $n$ be the order of $\tau$. By Theorem \ref{Katsura's criterion}, $\tau$ on $A$ is rigid. Since $A$ has good reduction, there is a $\tau$-equivariant isomorphism $V_\ell(A)\cong V_\ell(\cal{A}_k)$. By \cite[Proposition 3.3]{rybakov_generalized_kummer}, $\tau$ is also rigid on $\cal{A}_k$.

    Let $\zeta_n$ be a primitive $n$-th root of unity. We view $T_p(\cal{A}_k)$ as a module over $\bb{Z}_p[\zeta_n]$ where $\zeta_n$ acts as $\tau$. Since $\cal{A}_k$ is not supersingular, $1$ is not an eigenvalue of any non-trivial power of $\tau$ by \cite[Proposition 2.3.2]{Alvaro_thesis}. In particular, since $\bb{Z}_p[\zeta_n]$ is a DVR, $T_p(\cal{A}_k)$ is free over $\bb{Z}_p[\zeta_n]$. Therefore, $T_p(\cal{A}_k)$ has rank $2$ over $\bb{Z}_p$ and $\cal{A}_k$ is ordinary.
\end{proof}

\subsection{Cartier duality and the connected-\'etale sequence}\label{Cartier duality and the connected etale sequence}

This sub-section is based on \cite[Section 4.1]{non-supersingular-kummer}. Let $G$ be a finite flat group scheme over $\cal{O}_K$ with the connected-\'etale sequence
\[
    0\to G^\circ\to G\to G^{\et}\to 0.
\]
Since $(G_k)^{\et}=(G^{\et})_k$, we obtain the following isomorphism of $G_K$-modules:
        \[
            G^{\et}(\overline{K}) = G^{\et}(K^{\rm{nr}})\cong G^{\et}(\overline{k}) = G(\overline{k}).
        \]
Now if we take the $\overline{K}$-points of the connected-\'etale sequence, we get
\begin{equation}\label{variation of connected etale sequence}
    0\to G^\circ(\overline{K})\to G(\overline{K})\overset{\rho}{\to} G(\overline{k})\to 0,
\end{equation}
where $\rho:G(\overline{K})\to G(\overline{k})$ is the reduction map. We denote it by $\rho$ throughout this article.

Denote by $D(G)$ the Cartier dual of $G$. Then we have the following lemma:

\begin{lem}\label{group scheme embedding}
    Suppose the degree of $G$ is a power of $p$. Then the canonical isomorphism $D(D(G))\overset{\sim}{\to} G$ induces an embedding
    \[
        D(D(G)^{\et})\hookrightarrow G^\circ.
    \]
    In particular, if $D(G)^{\et}$ and $G^\circ$ have the same degree, $D(D(G)^{\et})\cong G^\circ$.
\end{lem}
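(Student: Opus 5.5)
Dualize the connected-\'etale sequence of $D(G)$. Cartier duality is an exact contravariant anti-equivalence on finite flat commutative group schemes over $\cal{O}_K$. So from
\[
    0\to D(G)^\circ\to D(G)\to D(G)^{\et}\to 0
\]
we obtain the exact sequence
\[
    0\to D(D(G)^{\et})\to D(D(G))\to D(D(G)^\circ)\to 0.
\]
Composing with the canonical isomorphism $D(D(G))\overset{\sim}{\to} G$ gives a closed immersion $\iota: D(D(G)^{\et})\hookrightarrow G$, and it remains to show that $\iota$ factors through $G^\circ$.

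First I check that $H:=D(D(G)^{\et})$ is connected. Its special fibre is $D((D(G)_k)^{\et})$, the Cartier dual of an \'etale group scheme of $p$-power order over the perfect field $k$. Over $\overline{k}$ an \'etale $p$-group is a constant $p$-group, whose dual is a product of $\mu_{p^n}$'s and hence infinitesimal. So $H_k$ is connected. Since $\cal{O}_K$ is Henselian, a finite flat scheme over $\cal{O}_K$ is connected if and only if its special fibre is, so $H$ is connected. This is the step where the $p$-power degree hypothesis is used.

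Next I compose $\iota$ with the quotient map $G\to G^{\et}$. This gives a homomorphism from a connected finite flat group scheme to an \'etale one. Such a map is trivial: its image is connected (it is the image of a connected scheme through the identity section) and \'etale, hence equal to the identity section. More concretely, on special fibres $H_k$ is infinitesimal, so the map factors through the identity point of $G^{\et}_k$. Since $G^{\et}$ is \'etale, a homomorphism out of the flat scheme $H$ is determined by its special fibre, so the map is zero. Therefore $\iota$ factors through $\ker(G\to G^{\et})=G^\circ$. It remains a closed immersion, because $G^\circ\hookrightarrow G$ is a closed immersion and $\iota$ is a monomorphism of finite schemes. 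This gives the embedding $D(D(G)^{\et})\hookrightarrow G^\circ$.

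For the final assertion, $H$ and $G^\circ$ are finite flat of the same degree, and $H\hookrightarrow G^\circ$ is a closed immersion. A closed immersion of finite locally free $\cal{O}_K$-schemes of equal rank is an isomorphism, since the surjection of locally free modules of the same rank is an isomorphism. Also, the degree of $D(D(G)^{\et})$ equals the degree of $D(G)^{\et}$, so the hypothesis is exactly what is needed. The main point requiring care is the connectedness of the dual of an \'etale $p$-group over the Henselian base; everything else is formal.
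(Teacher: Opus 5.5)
Your proof is correct and follows the same route as the paper. As there, $D(G)^{\et}$ is \'etale of $p$-power order, so its Cartier dual is connected, and hence its image under $D(D(G))\cong G$ lies in $G^\circ$; you simply supply the details the paper leaves implicit (exactness of Cartier duality giving the closed immersion, connectedness via the special fibre over the Henselian base, triviality of maps from a connected scheme to an \'etale one, and the equal-rank argument for the final isomorphism).
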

\begin{proof}
    Since the degree of $G$ is a power of $p$, the same is true for $D(G)$ and $D(G)^{\et}$. Thus, $D(G)^{\et}$ is unipotent and $D(D(G)^{\et})$ is connected. Then the image of $D(D(G)^{\et})$ under $D(D(G))\overset{\sim}{\to} G$ lies in $G^\circ$.
\end{proof}

\subsubsection{Fixed points and degree calculation}\label{fixed points and degrees}

We return to the good reduction problem. Let $A$ be an abelian surface over $K$ with ordinary good reduction. Let $\tau$ be an automorphism of $A$, such that the minimal desingularization $\Kum(A,\tau)$ of $A/\tau$ is a K3 surface. Suppose we are in the wild case. We can now compute the degrees and the numbers of $\overline{K}$-points of $\cal{A}[1-\tau^j]$, $\cal{A}[1-\tau^j]^\circ$, and $\cal{A}[1-\tau^j]^{\et}$ for different $j$.

Suppose $\tau$ has order $3$ and $p=3$. Then the only non-trivial case is $j=1$. By Corollary \ref{key identities of tau}, $(1-\tau)^2=-3\tau$. Then we have $\cal{A}[1-\tau]\subset \cal{A}[3]$. Also, we obtain
\[
    (\text{deg}\cal{A}[1-\tau])^2 = \text{deg}\cal{A}[3]=81.
\]
Therefore, $\text{deg}\cal{A}[1-\tau]=\lvert A[1-\tau](\overline{K})\rvert=9$. Moreover, we have
\[
    \lvert \cal{A}[1-\tau](\overline{k})\rvert^2 = \lvert \cal{A}[3](\overline{k})\rvert=9.
\]
Thus, $\text{deg}\cal{A}[1-\tau]^{\et}=\lvert \cal{A}[1-\tau](\overline{k})\rvert=3$. Finally, using the connected-\'etale sequence and \eqref{variation of connected etale sequence}, we obtain $\text{deg}\cal{A}[1-\tau]^\circ=\lvert\cal{A}[1-\tau]^\circ(\overline{K})\rvert=3$.

Similarly, we do the computation in the other cases. The results are summarized as follows:
\[
    \begin{array}{c|c|c|c|c|c}
    \operatorname{ord}(\tau) & p & j
    & \makecell{\deg \cal{A}[1-\tau^j] \\ {}= \lvert\cal{A}[1-\tau^j](\overline{K})\rvert}
    & \makecell{\text{deg}\cal{A}[1-\tau^j]^\circ \\ {}=\lvert\cal{A}[1-\tau^j]^\circ(\overline{K})\rvert}
    & \makecell{\deg \cal{A}[1-\tau^j]^{\et} \\ {}= \lvert\cal{A}[1-\tau^j](\overline{k})\rvert}
    \\ \hline
    3 & 3 & 1 & 9 & 3 & 3 \\ \hline
    \multirow{2}{*}{4} & \multirow{2}{*}{2} & 1 & 4 & 2 & 2 \\
    & & 2 & 16 & 4 & 4 \\ \hline
    \multirow{6}{*}{6} & \multirow{3}{*}{2} & 1 & 1 & 1 & 1 \\
    & & 2 & 9 & 1 & 9 \\
    & & 3 & 16 & 4 & 4 \\ \cline{2-6}
    & \multirow{3}{*}{3} & 1 & 1 & 1 & 1 \\
    & & 2 & 9 & 3 & 3 \\
    & & 3 & 16 & 1 & 16
    \end{array}
\]
In particular, $\tau$ on $A_{\overline{K}}$ and $\cal{A}_{\overline{k}}$ only fixes the origin when $\tau$ has order $6$.

\subsubsection{An isomorphism of $G_K$-modules}

Let $A$ be an abelian surface over $K$ with ordinary good reduction. Let $\cal{A}^\vee$ be the dual of $\cal{A}$, which exists by \cite[Theorem 5, Chapter 8]{Neron_models}.

\begin{cor}\label{group scheme embedding special case}
    \begin{enumerate}
        \item Suppose $p=3$ and $\omega\in K$. Let $\tau$ be an automorphism of $A$ of order $3$, such that the minimal desingularization of $A/\tau$ is a K3 surface. Then there is an isomorphism of $G_K$-modules
        \[
            D(\cal{A}[1-\tau]^\circ)(\overline{k})\cong \cal{A}[1-\tau]^\circ(\overline{K}).
        \]
        \item Suppose $p=3$ and $\omega\in K$. Let $\tau$ be an automorphism of $A$ of order $6$, such that the minimal desingularization of $A/\tau$ is a K3 surface. Then there is an isomorphism of $G_K$-modules
        \[
            D(\cal{A}[1-\tau^2]^\circ)(\overline{k})\cong \cal{A}[1-\tau^2]^\circ(\overline{K}).
        \]
    \end{enumerate}
\end{cor}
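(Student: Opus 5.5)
The plan is to reduce the statement to an elementary fact about Galois modules of order $3$, without using Lemma \ref{group scheme embedding} in any essential way. I treat (1); for (2) the same argument applies with $1-\tau$ replaced by $1-\tau^2$, since the table in Section \ref{fixed points and degrees} again gives $\deg \cal{A}[1-\tau^2]^\circ = 3$.

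Set $H=\cal{A}[1-\tau]^\circ$. By the table in Section \ref{fixed points and degrees}, $H$ is a connected finite flat group scheme of degree $3$ over $\cal{O}_K$, and $H(\overline{K})$ has order $3$.

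\textbf{Step 1: $D(H)$ is \'etale.} Because $(1-\tau)^2=-3\tau$, we have $H\subset \cal{A}[3]^\circ$. Since $\cal{A}_k$ is ordinary, $\cal{A}_k[3]^\circ_{\overline{k}}\cong \mu_3^2$ is of multiplicative type. Its closed subgroup $H_{\overline{k}}$ is then also of multiplicative type, so $D(H)_k$ is \'etale. For a finite flat group scheme over $\cal{O}_K$, \'etaleness may be checked on the special fibre, so $D(H)$ is \'etale. Consequently $D(H)(\overline{k})\cong D(H)(\overline{K})$ as $G_K$-modules, via the isomorphism of Section \ref{Cartier duality and the connected etale sequence} applied to $D(H)=D(H)^{\et}$.

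\textbf{Step 2: compute $D(H)(\overline{K})$.} Cartier duality on the generic fibre gives
\[
D(H)(\overline{K})\cong \Hom\bigl(H(\overline{K}),\mu_3(\overline{K})\bigr)
\]
$G_K$-equivariantly. Since $\omega\in K$, $\mu_3(\overline{K})\cong \bb{Z}/3$ with trivial $G_K$-action, so $D(H)(\overline{K})\cong \Hom(H(\overline{K}),\bb{Z}/3)$. Now write the action of $G_K$ on the cyclic group $H(\overline{K})\cong\bb{Z}/3$ through a character $\chi:G_K\to \Aut(\bb{Z}/3)=\bb{F}_3^\times=\{\pm1\}$. The dual module is then given by $\chi^{-1}$. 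Since $\chi^2=1$, we get $\chi^{-1}=\chi$, so $H(\overline{K})\cong D(H)(\overline{K})\cong D(H)(\overline{k})$ as $G_K$-modules.

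\textbf{Main obstacle.} The only step that is not formal is Step 1, namely that the connected part is of multiplicative type. This needs ordinarity of $\cal{A}_k$ and the fact that subgroups of multiplicative-type group schemes are again of multiplicative type. Alternatively, one could invoke Lemma \ref{group scheme embedding} on $\cal{A}[3]$ with its dual $\cal{A}^\vee[3]$. The hypothesis $\omega\in K$ is used exactly once, to trivialize $\mu_3$.
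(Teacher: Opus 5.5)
Your proof is correct. It reaches the key fact, that $D(\cal{A}[1-\tau]^\circ)$ is \'etale, by a different route from the paper.

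The paper invokes Oda's result that $\cal{A}[1-\tau]$ is Cartier dual to $\cal{A}^\vee[1-\tau^\vee]$. It then applies Lemma~\ref{group scheme embedding} with a degree count: both $\cal{A}[1-\tau]^\circ$ and $\cal{A}^\vee[1-\tau^\vee]^{\et}$ have degree $3$ by ordinarity. This identifies $D(\cal{A}[1-\tau]^\circ)\cong \cal{A}^\vee[1-\tau^\vee]^{\et}$.

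From there the endgame matches yours. Cartier duality over $\overline{K}$ and $\omega\in K$ make the two sides $\bb{F}_3$-linear duals of each other, and a character with values in $\bb{F}_3^\times=\{\pm1\}$ is self-dual.

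You instead note that $\cal{A}[1-\tau]^\circ$ sits inside $\cal{A}[3]^\circ$, whose special fibre is of multiplicative type by ordinarity. Hence its dual is \'etale on the special fibre, and therefore over $\cal{O}_K$.

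Your route is more self-contained. It needs neither the dual abelian scheme nor Oda's theorem, only two standard facts. First, the connected $p$-torsion of an ordinary abelian variety is multiplicative; equivalently $\widehat{\cal{A}}_{\overline{k}}\cong\widehat{\bb{G}}_m^{\oplus 2}$, which the paper uses anyway in Section~\ref{computation section}. Second, closed subgroups of multiplicative-type groups are multiplicative. It also treats (2) directly via $(1-\tau^2)^2=-3\tau^2$.

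The paper's route yields an extra identification, $D(\cal{A}[1-\tau]^\circ)(\overline{k})\cong\cal{A}^\vee[1-\tau^\vee](\overline{k})$. This parallels the Lazda--Skorobogatov indexing of the $D_4^1$ exceptional divisors by $\cal{A}^\vee[2](\overline{k})$, but the corollary itself does not need it.
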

\begin{proof}
    (2) is a consequence of (1), so we focus on (1). Let $\tau^\vee:\cal{A}^\vee\to \cal{A}^\vee$ be the dual of $\tau$. By \cite[Corollary 1.3]{oda_first_derham_cohomology}, $\cal{A}[1-\tau]$ and $\cal{A}^\vee[1-\tau^\vee]$ are Cartier dual to each other. Also, since $A$ has ordinary good reduction, $\cal{A}[1-\tau]^\circ$ and $D(D(\cal{A}[1-\tau])^{\et})$ both have degree $3$. By Lemma \ref{group scheme embedding}, $\cal{A}[1-\tau]^\circ\cong D(D(\cal{A}[1-\tau])^{\et})\cong D(\cal{A}^\vee[1-\tau^\vee]^{\et})$ and $D(\cal{A}[1-\tau]^\circ)\cong \cal{A}^\vee[1-\tau^\vee]^{\et}$. Then we have
    \begin{align*}
        \cal{A}[1-\tau]^\circ(\overline{K}) & \cong D(\cal{A}^\vee[1-\tau^\vee]^{\et})(\overline{K}) \\ & \cong \Hom(\cal{A}^\vee[1-\tau^\vee]^{\et}(\overline{K}),\bb{F}_3) \\ & \cong \Hom(\cal{A}^\vee[1-\tau^\vee](\overline{k}),\bb{F}_3),
    \end{align*}
    and
    \[
        D(\cal{A}[1-\tau]^\circ)(\overline{k}) \cong \cal{A}^\vee[1-\tau^\vee]^{\et}(\overline{k})=\cal{A}^\vee[1-\tau^\vee](\overline{k}).
    \]
    Therefore, $D(\cal{A}[1-\tau]^\circ)(\overline{k})$ and $ \cal{A}[1-\tau]^\circ(\overline{K})$ are $1$-dimensional representations of $G_K$ over $\bb{F}_3$ dual to each other, which are isomorphic.
\end{proof}

\subsection{The $\ell$-adic cohomology of generalized Kummer surfaces}

Let $A$ be an abelian surface over a perfect field $F$. Let $\tau$ be an automorphism of $A$, such that the minimal desingularization of $A/\tau$ is a K3 surface $X$. Let $\ell\neq \chara F$ be a prime number. Denote by $S$ the set of exceptional divisors of $X\to A_{\overline{F}}/\tau$.

\begin{prop}\label{cohomology computation}
    There is a Galois-equivariant isomorphism:
    \[
        H^2_{\et}(X_{\overline{F}},\bb{Q}_\ell)\cong H^2_{\et}(A_{\overline{F}},\bb{Q}_\ell)^{\tau=\textnormal{id}}\oplus \bb{Q}_\ell^S(-1).
    \]
\end{prop}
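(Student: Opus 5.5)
The plan is to compute $H^2$ of the singular quotient $Y:=A_{\overline{F}}/\tau$ first, and then account for the exceptional curves of the resolution $\pi:X_{\overline{F}}\to Y$.

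\emph{Step 1: the quotient.} Let $q:A_{\overline{F}}\to Y$ be the quotient map and $\Gamma=\langle\tau\rangle$. Since $\ell\neq\chara F$ and we use $\bb{Q}_\ell$-coefficients, the order of $\Gamma$ is invertible in the coefficients. The standard transfer argument then gives $R^iq_*\bb{Q}_\ell=0$ for $i>0$ and $(q_*\bb{Q}_\ell)^\Gamma=\bb{Q}_\ell$. From this we get a Galois-equivariant isomorphism
\[
H^2_{\et}(Y,\bb{Q}_\ell)\cong H^2_{\et}(A_{\overline{F}},\bb{Q}_\ell)^{\tau=\textnormal{id}}.
\]
This is Galois-equivariant because $\tau$ is defined over $F$, so the quotient and $q$ are defined over $F$.

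\emph{Step 2: the resolution.} The singularities of $Y$ are rational double points by Theorem \ref{Katsura's criterion}, so they are finitely many points $y_1,\dots,y_m$. The exceptional locus $E=\pi^{-1}\{y_1,\dots,y_m\}$ is a union of trees of $\bb{P}^1$'s, with irreducible components indexed by $S$. Let $U=Y\setminus\{y_j\}\cong X_{\overline{F}}\setminus E$. I will compare the two long exact sequences of cohomology with supports, for $(Y,\{y_j\})$ and for $(X_{\overline{F}},E)$. Since $Y$ is a normal surface with quotient singularities (rationally smooth), $H^i_{\{y_j\}}(Y,\bb{Q}_\ell)=0$ for $i=1,2,3$, because the local links are rational homology spheres. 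Hence $H^2(Y)\cong H^2(U)$ and $H^3(U)\cong H^3(Y)$.

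On the $X$ side, purity gives $H^2_E(X)\cong\bigoplus_{C\in S}\bb{Q}_\ell(-1)=\bb{Q}_\ell^S(-1)$, Galois-equivariantly: $G_F$ permutes the components and the cycle classes are twisted by $(-1)$. Also $H^1_E=0$. The sequence
\[
0\to H^2_E(X)\to H^2(X)\to H^2(U)\to H^3_E(X)
\]
has injective first map, since the intersection matrix of the exceptional curves is negative definite. I would then show that $H^2(X)\to H^2(U)$ is surjective; for instance, $H^2(U)\cong H^2(Y)$ injects into $H^2(X)$ via $\pi^*$, which splits the map.

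Conversely, the orthogonal complement of the span of the classes of the curves in $S$, under cup product, maps isomorphically onto $H^2(U)$. This gives
\[
H^2(X)\cong \pi^*H^2(Y)\oplus\bb{Q}_\ell^S(-1),
\]
where the splitting is Galois-equivariant because it is defined by the equivariant pairing.

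The main obstacle I expect is the vanishing of the local cohomology $H^i_{\{y_j\}}(Y,\bb{Q}_\ell)$ in positive characteristic, including wild cases such as $p\mid\operatorname{ord}\tau$. I would handle it by reducing to the fact that the link cohomology is $H^*$ of a finite quotient of a punctured smooth germ with $\bb{Q}_\ell$-coefficients. Alternatively, I would argue via the rational double point structure: the negative definite intersection matrix of the exceptional curves has nonzero determinant, and this forces the local $H^2$ and $H^3$ to vanish rationally. As a sanity check on dimensions, $6+16=22$ for order $2$ and $2+20=22$ for order $3$ should be consistent with the table of singularities.
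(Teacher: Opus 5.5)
Your proposal is correct, but the resolution step takes a different route from the paper.

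\textbf{The paper's route.} The paper runs the Leray spectral sequence for $\pi$. Since $R^1\pi_*\bb{Q}_\ell=0$ and $R^2\pi_*\bb{Q}_\ell$ is the skyscraper sheaf $\bb{Q}_\ell^S(-1)$, this gives
\[
0\to H^2(Y)\to H^2(X)\to \bb{Q}_\ell^S(-1)\to H^3(Y).
\]
The paper then kills $H^3(Y)\cong H^3_{\et}(A_{\overline{F}},\bb{Q}_\ell)^{\tau=\mathrm{id}}$ by a global argument: rigidity gives $H^1(A)^{\tau=\mathrm{id}}=0$ via Rybakov, and Poincar\'e duality transfers this to $H^3$. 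It splits the resulting sequence with cycle classes.

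\textbf{Your route.} You use cohomology with supports and only local input. The key fact is the vanishing of $H^i_{\{y_j\}}(Y,\bb{Q}_\ell)$ for $i=2,3$. Your second justification works in every characteristic, including the wild case. On the resolution side, $H^1(E)=0$ and $H^3_E(X)=0$. The composite $H^2_E(X)\to H^2(X)\to H^2(E)$ is the invertible intersection matrix. Together these force the punctured neighbourhood of each singular point to have $H^1=H^2=0$. Your first justification (quotient of a punctured smooth germ) also works: rigidity rules out fixed curves, so the map of punctured germs is finite \'etale. This vanishing makes $H^2(Y)\to H^2(U)$ an isomorphism, so $\pi^*$ gives the Galois-equivariant splitting.

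\textbf{Comparison.} Your argument never needs $H^3(Y)=0$. It therefore proves the decomposition for any normal projective surface whose singularities resolve to trees of smooth rational curves, such as rational double points, independently of rigidity. The paper's argument is shorter because rigidity is already available in its setting.

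\textbf{Two slips, neither used in the proof.}
\begin{itemize}
\item The claim $H^3(U)\cong H^3(Y)$ is false once there are $m\ge 2$ singular points. Here $H^4_{\{y_j\}}(Y)\cong\bb{Q}_\ell(-2)^m$ surjects onto $H^4(Y)\cong\bb{Q}_\ell(-2)$, since $H^4(U)=0$. So you only get $H^3(Y)\hookrightarrow H^3(U)$, with cokernel of dimension $m-1$.
\item Your order-$3$ sanity check should read $4+18=22$. $H^2(A)^{\tau=\mathrm{id}}$ is $4$-dimensional, spanned by the classes $e_a\wedge f_b$, and $9A_2$ contributes $18$ curves.
\end{itemize}
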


To prove this, we need the following lemma.

\begin{lem}\cite[Proposition 3.2.1]{Harder_Narasimhan_1975}\label{quotient cohomology}
    For every $j\geq 0$, there is a Galois-equivariant isomorphism:
    \[
        H^j_{\et}(A_{\overline{F}}/\tau,\bb{Q}_\ell)\cong H^j_{\et}(A_{\overline{F}},\bb{Q}_\ell)^{\tau=\textnormal{id}}.
    \]
\end{lem}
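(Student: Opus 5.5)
The plan is the standard transfer argument for quotients by a finite group, carried out at the level of sheaves on the quotient. Write $G=\langle\tau\rangle$, $Y=A_{\overline{F}}/\tau$, and let $\pi:A_{\overline{F}}\to Y$ be the quotient map. It is finite and surjective, and everything is defined over $F$. I will work first with $\Lambda_n=\bb{Z}/\ell^n$ coefficients and pass to the limit and invert $\ell$ only at the end, since $\ell$ may divide $\lvert G\rvert$.

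\textbf{Step 1: the sheaf $\pi_*\Lambda_n$.} The group $G$ acts on $\pi_*\Lambda_n$ by functoriality, since $\pi\circ g=\pi$. I claim that the adjunction map $\Lambda_n\to\pi_*\Lambda_n$ identifies $\Lambda_n$ with $(\pi_*\Lambda_n)^G$. To check this, pass to the stalk at a geometric point $\bar y$ of $Y$. Because $\pi$ is finite, $(\pi_*\Lambda_n)_{\bar y}=\prod_{x\in\pi^{-1}(\bar y)}\Lambda_n$. Since $Y$ is the geometric quotient, $G$ acts transitively on the fibre, so the invariants are exactly the diagonal copy of $\Lambda_n$.

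Next I define the trace $\mathrm{tr}=\sum_{g\in G}g:\pi_*\Lambda_n\to(\pi_*\Lambda_n)^G=\Lambda_n$. This is the step I expect to be the main technical point, because one cannot use the idempotent $\frac{1}{|G|}\sum g$ integrally. With this definition, the composite $\Lambda_n\to\pi_*\Lambda_n\xrightarrow{\mathrm{tr}}\Lambda_n$ is multiplication by $\lvert G\rvert$. The composite $\pi_*\Lambda_n\xrightarrow{\mathrm{tr}}\Lambda_n\to\pi_*\Lambda_n$ is $\sum_{g}g$.

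\textbf{Step 2: pass to cohomology.} Since $\pi$ is finite, $\pi_*$ is exact, so $H^j(A_{\overline{F}},\Lambda_n)=H^j(Y,\pi_*\Lambda_n)$ compatibly with the $G$-action. The maps of Step 1 then induce $\pi^*:H^j(Y,\Lambda_n)\to H^j(A_{\overline{F}},\Lambda_n)$ and $\pi_*:H^j(A_{\overline{F}},\Lambda_n)\to H^j(Y,\Lambda_n)$ satisfying
\[
\pi_*\pi^*=\lvert G\rvert,\qquad \pi^*\pi_*=\textstyle\sum_{g\in G}g^*.
\]
Both maps are compatible with $n$, so they pass to the inverse limit over $n$. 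Tensoring with $\bb{Q}_\ell$ makes $\lvert G\rvert$ invertible. The first identity then shows that $\pi^*$ is injective. The second shows that $\pi^*$ has image exactly $H^j(A_{\overline{F}},\bb{Q}_\ell)^{G}$: the image is $G$-invariant because $\pi\circ g=\pi$, and any invariant class $c$ satisfies $c=\pi^*\bigl(\lvert G\rvert^{-1}\pi_*c\bigr)$. Since $G$ is cyclic generated by $\tau$, $G$-invariants are the same as $\tau$-invariants.

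\textbf{Step 3: Galois equivariance.} The maps $\pi$ and $\tau$ are defined over $F$, so the sheaf maps $\Lambda_n\to\pi_*\Lambda_n$ and $\mathrm{tr}$ descend to $A/\tau$ over $F$. Hence $\pi^*$ commutes with the action of $\mathrm{Gal}(\overline{F}/F)$, and the isomorphism $H^j_{\et}(A_{\overline{F}}/\tau,\bb{Q}_\ell)\cong H^j_{\et}(A_{\overline{F}},\bb{Q}_\ell)^{\tau=\mathrm{id}}$ is Galois-equivariant.

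The only delicate point is the stalk computation in Step 1, in particular that the geometric fibres of $\pi$ are single $G$-orbits. This holds because the quotient of a quasi-projective variety by a finite group is a geometric quotient.
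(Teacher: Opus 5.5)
Your proposal is correct. The paper gives no argument of its own here and simply cites Harder--Narasimhan. Your transfer argument is the standard proof of that cited result: you identify $(\pi_*\Lambda_n)^G$ with $\Lambda_n$ via transitivity of $G$ on geometric fibres, use $\pi_*\pi^*=\lvert G\rvert$ and $\pi^*\pi_*=\sum_{g\in G}g^*$, and invert $\lvert G\rvert$ after passing to $\bb{Q}_\ell$-coefficients, with Galois-equivariance holding because $\pi$ and $\tau$ are defined over $F$ and $A_{\overline{F}}/\tau=(A/\tau)_{\overline{F}}$.
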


\begin{proof}[Proof of Proposition \ref{cohomology computation}]
    Let $\pi:X\to A_{\overline{F}}/\tau$ be the minimal desingularization of $A_{\overline{F}}/\tau$. This is an isomorphism away from the singularities on $A_{\overline{F}}/\tau$, on which $R^j\pi_*\bb{Q}_\ell$ is supported. The fibres over these points are exactly the exceptional divisors of $\pi$, which are configurations of smooth rational curves with dual graphs of ADE type. Thus, $R^1\pi_*\bb{Q}_\ell=0$ and $R^2\pi_*\bb{Q}_\ell$ is the $\bb{Q}_\ell(-1)$-valued skyscraper sheaf supported on the singular points of $A_{\overline{F}}/\tau$. In particular,
    \begin{equation}\label{higher pushforward cohomology}
        \begin{split}
            H^j_{\et}(A_{\overline{F}}/\tau,R^1\pi_*\bb{Q}_\ell) & = 0\text{ for all $j\geq 0$,} \\
            H^0_{\et}(A_{\overline{F}}/\tau,R^2\pi_*\bb{Q}_\ell) & = \bb{Q}_\ell^S(-1).
    \end{split}
    \end{equation}
    Consider the Leray spectral sequence 
    \[
        H^p_{\et}(A_{\overline{F}}/\tau,R^q\pi_*\bb{Q}_\ell)\implies H^{p+q}_{\et}(X_{\overline{F}},\bb{Q}_\ell).
    \]
    By \eqref{higher pushforward cohomology} and Lemma \ref{quotient cohomology}, we get the following exact sequence
    \[
        0\to H^2_{\et}(A_{\overline{F}},\bb{Q}_\ell)^{\tau=\text{id}}\to H^2_{\et}(X_{\overline{F}},\bb{Q}_\ell)\to \bb{Q}_\ell^S(-1)\to H^3_{\et}(A_{\overline{F}},\bb{Q}_\ell)^{\tau=\text{id}}.
    \]
    By Theorem \ref{Katsura's criterion}, the $\tau$-action on $A$ is rigid. In particular, by \cite[Proposition 3.3]{rybakov_generalized_kummer}, $H^1_{\et}(A_{\overline{F}},\bb{Q}_\ell)^{\tau=\text{id}}=0$, so $1$ is not an eigenvalue of $\tau$ on $H^1_{\et}(A_{\overline{F}},\bb{Q}_\ell)$. By Poincar\'e duality, $H^3_{\et}(A_{\overline{F}},\bb{Q}_\ell)$ and $H^1_{\et}(A_{\overline{F}},\bb{Q}_\ell)$ are dual to each other as representations of $\langle\tau\rangle$. This implies that $1$ is not an eigenvalue of $\tau$ on $H^3_{\et}(A_{\overline{F}},\bb{Q}_\ell)$. Therefore, $H^3_{\et}(A_{\overline{F}},\bb{Q}_\ell)^{\tau=\text{id}}=0$. We then obtain the following short exact sequence:
    \[
        0\to H^2_{\et}(A_{\overline{F}},\bb{Q}_\ell)^{\tau=\text{id}}\to H^2_{\et}(X_{\overline{F}},\bb{Q}_\ell)\to \bb{Q}_\ell^S(-1)\to 0.
    \]
    Note that the cycle class map provides a section for this short exact sequence, which finishes the proof.
\end{proof}

\subsection{The good reduction of $A$ and the arithmetic twists}

Let $A$ be an abelian surface over $K$. Let $\tau$ be an automorphism of $A$ of order $n\in \{3,4,6\}$, such that the minimal desingularization $X=\Kum(A,\tau)$ of $A/\tau$ is a K3 surface. Suppose $X$ has good reduction. In this sub-section, we prove the following:

\begin{prop}\label{twisting argument}
    There exists $\phi\in H^1(G_K,\Aut(A_{\overline{K}},\tau))$ such that $\Kum(A,\tau)=\Kum(A^{\phi},\tau^{\phi})$ and $A^{\phi}$ has good reduction.
\end{prop}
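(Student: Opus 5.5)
We follow the strategy of Lazda--Skorobogatov for ordinary Kummer surfaces: first show that good reduction of $X$ forces the $G_K$-action on $H^1_{\et}(A_{\overline{K}},\bb{Q}_\ell)$ to become unramified after twisting by automorphisms of $(A_{\overline{K}},\tau)$, and then apply N\'eron--Ogg--Shafarevich. Fix a prime $\ell\neq p$ with $\ell\geq 3$.

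\textbf{Step 1: inertia acts through $C(\tau)$.} Since $X$ has good reduction, the inertia group $I_K$ acts trivially on $H^2_{\et}(X_{\overline{K}},\bb{Q}_\ell)$. By Proposition \ref{cohomology computation}, $I_K$ therefore acts trivially on $H^2_{\et}(A_{\overline{K}},\bb{Q}_\ell)^{\tau=\mathrm{id}}$. It also acts trivially on the permutation part $\bb{Q}_\ell^S(-1)$, so $I_K$ fixes every exceptional curve and hence every singular point of $A_{\overline{K}}/\tau$.

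\textbf{Step 2: inertia acts on $H^1$ by automorphisms commuting with $\tau$.} Since $\tau$ is defined over $K$, the Galois action on $V_\ell(A)$ commutes with $\tau$. By Proposition \ref{min poly}, $V_\ell(A)$ is a free module of rank $2$ over $E=\bb{Q}_\ell(\zeta_n)$ (or over $\bb{Q}_\ell\times\bb{Q}_\ell$ when $\ell$ splits). So $G_K$ acts $E$-linearly on a $2$-dimensional $E$-space $W$. The space $H^2(A)^{\tau}\subset\wedge^2_{\bb{Q}_\ell}W^\vee$ contains the $E$-determinant piece together with further $\tau$-invariant pieces built from $W\otimes\overline{W}$. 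Triviality of $I_K$ on these forces the image of $I_K$ in $\GL_E(W)$ to be finite and central, i.e. scalars $\mu\subset E^\times$ of finite order. These are exactly automorphisms of $A_{\overline{K}}$ of the form $\tau^j$ (or $-\tau^j$), which commute with $\tau$. The main obstacle is this step: showing the image of inertia is a finite group of roots of unity in $\bb{Z}[\tau]^\times$ rather than merely a compact subgroup. For this I would use Grothendieck's semistability theorem (after a finite extension, inertia acts unipotently) together with triviality on the $\tau$-invariant part of $H^2$: a unipotent element acting trivially on this part must be trivial. Hence $I_K$ acts through a finite quotient, and the only elements of $\GL_E(W)$ with finite order that act trivially on all of $H^2(A)^\tau$ are scalars in $\langle\pm\zeta_n\rangle$.

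\textbf{Step 3: twist.} Since $I_K$ acts through the finite quotient $\langle\pm\tau\rangle\subset\Aut(A_{\overline{K}},\tau)$, I will construct a cocycle $\phi$ cancelling it. Concretely, inertia acts on $V_\ell(A)$ via a homomorphism $\chi: I_K\to\langle-1,\tau\rangle$ that factors through a finite, tamely or wildly ramified extension $L/K$. I want to extend $\chi^{-1}$ to a cocycle on $G_K$ with values in the abelian group $\Aut(A_{\overline{K}},\tau)\supseteq\langle -1,\tau\rangle$. Since $\tau$ is $K$-rational, $G_K$ acts trivially on $\langle-1,\tau\rangle$, so I need a homomorphism $G_K\to\langle-1,\tau\rangle$ extending $\chi^{-1}$ (up to the $G_K/I_K$-conjugation, which is trivial here). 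This extension exists because $G_K/I_K\cong G_k$ is procyclic-like and $\chi$ is $G_K$-invariant. If necessary, I would pass to $\widehat{\bb{Z}}$-valued lifting via local class field theory: the map $K^\times\to$ cyclic group restricted to units extends by choosing the value on a uniformizer.

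\textbf{Step 4: conclude.} Twisting by $\phi$ changes the Galois action on $V_\ell(A^\phi)$ by $\chi^{-1}$ on $I_K$, so $I_K$ acts trivially on it. By N\'eron--Ogg--Shafarevich, $A^\phi$ has good reduction. Because $\phi$ takes values in automorphisms commuting with $\tau$, $\tau$ descends to $\tau^\phi$ on $A^\phi$, and $A^\phi/\tau^\phi$ is the twist of $A/\tau$ by the image of $\phi$ in $\Aut(A_{\overline{K}}/\tau)$. Since $\tau^j$ acts trivially on $A/\tau$, and $-1$ commutes with $\tau$ and I will choose $\phi$ in $\langle\tau\rangle$ whenever possible, this image is trivial and $\Kum(A^\phi,\tau^\phi)=\Kum(A,\tau)$. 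When $-1\notin\langle\tau\rangle$ (the order-$3$ case), the central element $-1$ acts on $A/\tau$ nontrivially. I would then argue from Step 1 (trivial action on $H^2(A)^\tau$ and on $S$) that $\chi$ actually lands in $\langle\tau\rangle$: $-1$ acts trivially on $H^2$ but would permute the singular points nontrivially, contradicting the triviality on $\bb{Q}_\ell^S$ unless it acts trivially there too.
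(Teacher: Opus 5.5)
Your outline matches the paper's. Triviality of inertia on $H^2_{\et}(A_{\overline{K}},\bb{Q}_\ell)^{\tau=\mathrm{id}}$ and on $S_K$ makes each $\sigma\in I_K$ act as $\mathrm{diag}(a,a^{-1},a,a^{-1})$ in a $\tau$-eigenbasis. Grothendieck's monodromy theorem makes the image finite. One then twists by a homomorphism into $\langle\tau\rangle$ lifted to $G_K$.

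\textbf{Main gap (Step 2).} The step you flag as the main obstacle is not resolved. It is false that the only finite-order elements of $\GL_E(W)$ commuting with $\tau$ and trivial on $H^2(A)^{\tau}$ are the scalars in $\langle\pm\zeta_n\rangle$. They are exactly the scalars $a\in\mu(E)$ with $N_{E/\bb{Q}_\ell}(a)=1$. For example, every $a\in\mu_{\ell-1}$ qualifies when $\ell\equiv 1 \bmod 12$, and $a=i\in\bb{Q}_{11}(\omega)$ (norm $i^{12}=1$) qualifies when $n=3$, $\ell=11$. Finiteness plus triviality on $H^2(A)^\tau$ uses nothing arithmetic about inertia, so it cannot pin down $a$.

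The missing ingredient is integrality. Once $A_L$ has good reduction, each $\sigma\in I_K$ acts semi-linearly on the N\'eron model over $\cal{O}_L$, hence by an automorphism of $\cal{A}_{k_L}$. By smooth proper base change and integrality of traces of endomorphisms of abelian varieties, its trace on $H^1$ is an integer. This forces $2(a+a^{-1})\in\bb{Z}$, i.e.\ $a\in\{\pm1,\pm\zeta_3^{\pm1},\pm i\}$. You must still exclude $a=\pm i$ for $n\in\{3,6\}$ and $a=\pm\zeta_3^{\pm1}$ for $n=4$. The paper does this by taking $\ell\equiv 7$, resp.\ $5 \bmod 12$. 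Then $\tau$ is diagonalizable over $\bb{Q}_\ell$, so $a\in\bb{Q}_\ell$, while the unwanted roots of unity are not in $\bb{Q}_\ell$. An arbitrary $\ell\ge 3$ gives no such control.

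\textbf{Two further repairs.}

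\emph{Step 4, $n=3$.} Your exclusion of $\chi(\sigma)=-\tau^k$ conflates the Galois element $\sigma$ with the automorphism $[-1]$ of $A_{\overline{K}}$. The points of $A[1-\tau](\overline{K})\subset A[3](\overline{K})$ are $p$-torsion and invisible in $V_\ell$, so the $\ell$-adic information has to be transported through the special fibre. The paper argues as follows. By faithfulness of $\End$ on $T_\ell$, $\sigma$ induces $-\tau^{\pm k}$ on $\cal{A}_{k_L}$, which acts as $-1$ on $\cal{A}[1-\tau](\overline{k})\cong\bb{Z}/3$. On the other hand, triviality on $S_K$ makes $\sigma$ trivial on $A[1-\tau](\overline{K})$. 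Via the surjective $\sigma$-equivariant reduction map $\rho$, $\sigma$ is then trivial on $\cal{A}[1-\tau](\overline{k})$, a contradiction.

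\emph{Step 3.} The residue field is only assumed perfect, so $G_k$ need not be procyclic and local class field theory is unavailable. The lift from $H^1(I_K,\bb{Z}/n)^{G_k}$ to $H^1(G_K,\bb{Z}/n)$ should come from the Hochschild--Serre five-term sequence. For the $p$-part use $H^2(G_k,\bb{Z}/p^r)=0$, since $\mathrm{cd}_p(G_k)\le 1$. For the prime-to-$p$ part use injectivity of inflation on $H^2$, or Kummer theory with a uniformizer.
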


Let $\zeta_m$ be a primitive $m$-th root of unity for any integer $m>0$. Fix any prime number $\ell\neq p$ such that the following hold:
\begin{enumerate}
    \item If $n=3$ or $6$, $\ell\equiv 7\text{ mod } 12$, so that $\zeta_3\in \bb{Q}_\ell$ and $\zeta_4\notin \bb{Q}_\ell$.
    \item If $n=4$, $\ell\equiv 5\text{ mod } 12$, so that $\zeta_4\in \bb{Q}_\ell$ and $\zeta_3\notin \bb{Q}_\ell$.
\end{enumerate}
The reason for this choice will become clear in the proof of Lemma \ref{computing galois reps}. Let $\chi:G_K\to \Aut_{\bb{Q}_\ell}(H^1_{\et}(A_{\overline{K}},\bb{Q}_\ell))$ and $\alpha:\langle\tau\rangle\to \Aut_{\bb{Q}_\ell}(H^1_{\et}(A_{\overline{K}},\bb{Q}_\ell))$ respectively be the actions of $G_K$ and $\langle\tau\rangle$ on $H^1_{\et}(A_{\overline{K}},\bb{Q}_\ell)$.

\begin{lem}\label{computing galois reps}
    For every $\sigma\in I_K$, there exists $k_{\sigma}\in\bb{Z}$ such that $\chi(\sigma)=\alpha(\tau)^{k_{\sigma}}$.
\end{lem}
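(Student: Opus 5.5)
The plan is to exploit the good reduction of $X$ through Proposition \ref{cohomology computation}. Since $X$ has good reduction, the smooth proper base change theorem shows that $I_K$ acts trivially on $H^2_{\et}(X_{\overline{K}},\bb{Q}_\ell)$. The summand $H^2_{\et}(A_{\overline{K}},\bb{Q}_\ell)^{\tau=\mathrm{id}}$ is $G_K$-stable, so $I_K$ also acts trivially on it. Write $V=H^1_{\et}(A_{\overline{K}},\bb{Q}_\ell)$. Then $H^2=\wedge^2V$, and the problem becomes a statement about linear algebra of $V$ as a representation of $G_K\times\langle\tau\rangle$. The two actions commute because $\tau$ is defined over $K$.

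First, I will describe $V$ as a $\langle\tau\rangle$-module. By Proposition \ref{min poly}, $\alpha(\tau)$ has minimal polynomial the cyclotomic polynomial $\Phi_n$ of degree $2$ and characteristic polynomial $\Phi_n^2$. By the choice of $\ell$, $\Phi_n$ splits over $\bb{Q}_\ell$: for $n=3,6$ because $\zeta_3\in\bb{Q}_\ell$, and for $n=4$ because $\zeta_4\in\bb{Q}_\ell$. Hence $V=V_\zeta\oplus V_{\bar\zeta}$, a sum of two $2$-dimensional eigenspaces with eigenvalues $\zeta=\zeta_n$ and $\bar\zeta=\zeta^{-1}$.

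Since $G_K$ commutes with $\tau$, each $\chi(\sigma)$ preserves both $V_\zeta$ and $V_{\bar\zeta}$. We then get
\[
\wedge^2V=\wedge^2V_\zeta\oplus\wedge^2V_{\bar\zeta}\oplus (V_\zeta\otimes V_{\bar\zeta}),
\]
where $\tau$ acts by $\zeta^2$, $\zeta^{-2}$ and $1$ respectively. For every $n\in\{3,4,6\}$ we have $\zeta^2\neq 1$, so $(\wedge^2V)^{\tau=\mathrm{id}}=V_\zeta\otimes V_{\bar\zeta}$. Therefore, for $\sigma\in I_K$, writing $a=\chi(\sigma)|_{V_\zeta}$ and $b=\chi(\sigma)|_{V_{\bar\zeta}}$, we have $a\otimes b=\mathrm{id}$. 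This forces $a=\lambda\,\mathrm{id}$ and $b=\lambda^{-1}\mathrm{id}$ for some $\lambda\in\bb{Q}_\ell^\times$.

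Next, I will pin down $\lambda$. The element $\chi(\sigma)$ is defined over $\bb{Q}_\ell$ acting on $H^1$, and by Grothendieck's quasi-unipotence its eigenvalues are roots of unity up to unipotent parts. Here $\chi(\sigma)$ is semisimple, so $\lambda$ is a root of unity in $\bb{Q}_\ell$.

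The main obstacle is to show that $\lambda$ is a power of $\zeta$, i.e. $\lambda\in\mu_n$; I will also use that $\lambda^{-1}$ sits on $V_{\bar\zeta}$. The roots of unity in $\bb{Q}_\ell$ form $\mu_{\ell-1}$, which is far larger than $\mu_n$, so more input is needed. I will use that the characteristic polynomial of $\chi(\sigma)$ on $H^1$, namely $(X-\lambda)^2(X-\lambda^{-1})^2$, has rational integer coefficients independent of $\ell$, a result of Serre--Tate/Grothendieck for inertia acting on abelian varieties. The same $\sigma$ also commutes with $\tau$ for another auxiliary prime $\ell'$, and the argument above applies at $\ell'$ as well. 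Consequently $\lambda+\lambda^{-1}\in\bb{Q}$ and $\lambda$ is a root of unity with $[\bb{Q}(\lambda):\bb{Q}]\le 2$. So $\lambda\in\mu_4\cup\mu_6$, and $\lambda$ must also lie in $\bb{Q}_\ell$.

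This is where the congruence condition on $\ell$ enters. For $n=3,6$, $\zeta_4\notin\bb{Q}_\ell$, so $\lambda\in\mu_6=\langle\zeta_6\rangle$. For $n=4$, $\zeta_3\notin\bb{Q}_\ell$, so $\lambda\in\mu_4$. It remains to match $\lambda$ with a power of $\zeta$. When $n=4$ or $6$ we have $\mu_n=\langle\zeta\rangle$ directly. When $n=3$ and $\lambda=-\zeta^k$, I expect the trouble to be handled by noting that $-1=[-1]^*$ does not lie in $\langle\tau\rangle$. To rule this case out, I would consider the determinant or the action on $H^1$ of the reduction, and fall back if necessary to replacing $\tau$ by $-\tau$, arguing that this still yields the claim up to the sign character. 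Finally, $\lambda=\zeta^{k}$ gives $\chi(\sigma)=\alpha(\tau)^{k}$, since the two operators agree on $V_\zeta$ and, via $\lambda^{-1}=\bar\zeta^{k}$, on $V_{\bar\zeta}$.
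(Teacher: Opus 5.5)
\textbf{Cases $n=4,6$.} Your argument is correct here and is essentially the paper's. The paper also reduces to $\chi(\sigma)=\mathrm{diag}(a,a^{-1},a,a^{-1})$ using the triviality of $I_K$ on $H^2_{\et}(A_{\overline{K}},\bb{Q}_\ell)^{\tau=\mathrm{id}}$, and also uses Grothendieck's monodromy theorem. It then gets $2(a+a^{-1})\in\bb{Z}$ differently from you: it passes to $L/K$ with $A_L$ of good reduction and takes the trace of the automorphism of $\cal{A}_{k_L}$ induced by $\sigma$ (Milne, Prop.~12.9), where you quote Serre--Tate. Your second prime $\ell'$ is unnecessary, since integrality at one $\ell$ suffices.

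\textbf{The gap: $n=3$.} Here you must exclude $\lambda=-\zeta^k$, and the ideas you list cannot do it. The determinant of $\chi(\sigma)$ on $H^1$ is $\lambda^2\lambda^{-2}=1$ whatever $\lambda$ is. Replacing $\tau$ by $-\tau$ proves a different statement. The lemma exists to produce a cocycle with values in $\langle\tau\rangle$ so that $\Kum(A^{\phi},\tau^{\phi})=\Kum(A,\tau)$, and twisting by $-1$ changes the Kummer surface. By smooth proper base change, the action on $H^1$ of the reduction carries no more information than $\chi(\sigma)$ itself.

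More fundamentally, every hypothesis you actually use is blind to the sign. Take $A$ with good reduction and a ramified quadratic character $\epsilon$. The twist $A^{\epsilon}$ has $\chi(\sigma)=-\mathrm{id}\notin\langle\alpha(\tau)\rangle$ for $\sigma\in I_K$ with $\epsilon(\sigma)=-1$. Yet its $H^2_{\et}(A_{\overline{K}},\bb{Q}_\ell)$ is unramified and the characteristic polynomial $(X+1)^4$ is integral.

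The missing input is the other summand $\bb{Q}_\ell^{S_K}(-1)$ in Proposition \ref{cohomology computation}, which you never use. The paper argues as follows:
\begin{enumerate}
\item Since $(-\zeta^k)^3=-1$, replacing $\sigma$ by $\sigma^3$ reduces to ruling out $\chi(\sigma)=-\mathrm{id}$.
\item Because $I_K$ acts trivially on $S_K$, $\sigma$ fixes every singular point of $A_{\overline{K}}/\tau$, hence every point of $A[1-\tau](\overline{K})$.
\item On the other hand, $\sigma$ acts semi-linearly on the N\'eron model $\cal{A}$ of $A_L$. It induces an automorphism of $\cal{A}_{k_L}$ acting as $-1$ on $H^1_{\et}(\cal{A}_{\overline{k}},\bb{Q}_\ell)$, which is $[-1]$ by faithfulness of $\End$ on Tate modules (Milne, Lemma~12.2).
\item $[-1]$ acts non-trivially on the non-zero $3$-group $\cal{A}[1-\tau](\overline{k})$. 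This contradicts the $\sigma$-equivariance and surjectivity of $\rho\colon A[1-\tau](\overline{K})\to\cal{A}[1-\tau](\overline{k})$.
\end{enumerate}
The last step needs $\cal{A}[1-\tau](\overline{k})\neq 0$, which holds unless $p=3$ and the reduction is supersingular.
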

\begin{proof}
    Let $S_K$ be the set of exceptional divisors of the minimal desingularization $X_{\overline{K}}\to A_{\overline{K}}/\tau$. Since $X$ has good reduction, $H^2_{\et}(X_{\overline{K}},\bb{Q}_\ell)$ is unramified. Then by Proposition \ref{cohomology computation}, the absolute inertia group $I_K$ of $K$ acts trivially on $H^2_{\et}(A_{\overline{K}},\bb{Q}_\ell)^{\tau=\text{id}}$ and $S_K$.

    By Proposition \ref{min poly}, there exists a basis $e_1,f_1,e_2,f_2$ of $H^1_{\et}(A_{\overline{K}},\bb{Q}_\ell)$ such that $\tau$ acts as a diagonal matrix $M=\text{diag}(\zeta_n, \zeta_n^{-1},\zeta_n,\zeta_n^{-1})$. Therefore, $H^2_{\et}(A_{\overline{K}},\bb{Q}_\ell)^{\tau=\text{id}}$ has a basis
    \[
        \{e_1\wedge f_1, e_1\wedge f_2, e_2\wedge f_1, e_2\wedge f_2\}.
    \]
    Fix an arbitrary $\sigma\in I_K$. Since $\sigma$ acts trivially on $H^2_{\et}(A_{\overline{K}},\bb{Q}_\ell)^{\tau=\text{id}}$, $\sigma$ acts on $H^1_{\et}(A_{\overline{K}},\bb{Q}_\ell)$ as a diagonal matrix $\text{diag}(a,a^{-1},a,a^{-1})$ with respect to $e_1,f_1,e_2,f_2$ by computation.

    By Grothendieck's $\ell$-adic monodromy theorem \cite[Corollary 1.25]{Fontaine_Ouyang}, there exists a finite Galois extension $L/K$ such that every element in $I_L$ acts on $H^1_{\et}(A_{\overline{K}},\bb{Q}_\ell)$ unipotently. The only unipotent matrix of the form $\text{diag}(a,a^{-1},a,a^{-1})$ is the identity, so $I_L$ acts trivially on $H^1_{\et}(A_{\overline{K}},\bb{Q}_\ell)$. Then $A_L$ has good reduction. Let $\cal{A}$ be the N\'eron model of $A_L$ over $\cal{O}_L$. The semi-linear action of $\sigma$ on $A_L$ extends to a semi-linear action on $\cal{A}$, and restricts to a $k_L$-automorphism on $\cal{A}_{k_L}$. By the smooth proper base change theorem, there is a $\sigma$-equivariant isomorphism $H^1_{\et}(A_{\overline{K}},\bb{Q}_\ell)\cong H^1_{\et}(\cal{A}_{\overline{k}},\bb{Q}_\ell)$. By \cite[Proposition 12.9]{Abelian_variety_milne_book}, the trace of $\sigma$ on $H^1_{\et}(\cal{A}_{\overline{k}},\bb{Q}_\ell)$ is an integer. Thus, $2(a+a^{-1})\in \bb{Z}$ and $a\in \{\pm 1, \pm\zeta_3,\pm\zeta_3^2,\pm\zeta_4\}$. Then by the choice of $\ell$, we have the following:
    \begin{enumerate}
        \item if $n=3$ or $6$, $a=\pm \zeta_3^k$ for some integer $k$;
        \item if $n=4$, $a=\pm\zeta_4^k$ for some integer $k$.
    \end{enumerate}
    This finishes the proof when $n=4$ or $6$. We are left with the $n=3$ case.

    In this case, it remains to exclude the possibility that $a=-1$. Suppose $\chi(\sigma)=-\text{id}$. Using the $\sigma$-equivariant isomorphism $H^1_{\et}(A_{\overline{K}},\bb{Q}_\ell)\cong H^1_{\et}(\cal{A}_{\overline{k}},\bb{Q}_\ell)$, we see that $\sigma$ acts as $-\text{id}$ on $H^1_{\et}(\cal{A}_{\overline{k}},\bb{Q}_\ell)$. Then by \cite[Lemma 12.2]{Abelian_variety_milne_book}, $\sigma$ acts as the inversion automorphism on $\cal{A}_{k_L}$, therefore non-trivially on $\cal{A}[1-\tau](\overline{k})$.

    However, $\sigma$ acts trivially on $S_K$. In particular, it acts trivially on the singularities of $A_{\overline{K}}/\tau$, which are given by the projection of $A[1-\tau](\overline{K})$. Since the reduction map $\rho:A[1-\tau](\overline{K})\to \cal{A}[1-\tau](\overline{k})$ is $\sigma$-equivariant, $\sigma$ also acts trivially on $\cal{A}[1-\tau](\overline{k})$, a contradiction.
\end{proof}

Let $\phi:I_K\to \Aut(A_{\overline{K}},\tau)$ be the homomorphism that sends $\sigma\in I_K$ to $\tau^{-k_{\sigma}}$. Since the $G_K$-action on $A_{\overline{K}}$ commutes with $\tau$, it follows that $\phi$ defines a cocycle class in $H^1(I_K,\langle\tau\rangle)\subset H^1(I_K,\Aut(A_{\overline{K}},\tau))$. Also, $\phi$ is invariant under the action of $G_K$ on $I_K$ by conjugation. Therefore, $\phi$ lies in the $G_k$-invariant subset $H^1(I_K,\Aut(A_{\overline{K}},\tau))^{G_k=\text{id}}$.

We want to lift $\phi$ to a cocycle class in $H^1(G_K,\Aut(A_{\overline{K}},\tau))$. Again since the $G_K$-action on $A_{\overline{K}}$ commutes with $\tau$, we have $\langle\tau\rangle\cong \bb{Z}/n$ and $\phi\in H^1(I_K,\bb{Z}/n)^{G_k=\text{id}}$. Then it suffices to prove the following lemma:

\begin{lem}\label{cocycle lifting}
    The natural map $H^1(G_K,\bb{Z}/n)\to H^1(I_K,\bb{Z}/n)^{G_k=\textnormal{id}}$ is surjective.
\end{lem}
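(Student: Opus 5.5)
The natural route is the inflation--restriction exact sequence for the closed normal subgroup $I_K\subset G_K$, whose quotient is $G_K/I_K\cong G_k$. Here $\bb{Z}/n$ carries the trivial $G_K$-action: this is the identification $\langle\tau\rangle\cong\bb{Z}/n$ made just before the statement, which uses that the Galois action commutes with $\tau$. The low-degree part of the Hochschild--Serre spectral sequence $H^a(G_k,H^b(I_K,\bb{Z}/n))\Rightarrow H^{a+b}(G_K,\bb{Z}/n)$ gives the exact sequence
\[
0\to H^1(G_k,\bb{Z}/n)\to H^1(G_K,\bb{Z}/n)\to H^1(I_K,\bb{Z}/n)^{G_k}\to H^2(G_k,\bb{Z}/n)\to H^2(G_K,\bb{Z}/n).
\]
Here we use $(\bb{Z}/n)^{I_K}=\bb{Z}/n$. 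So the map in the lemma is surjective precisely when the transgression $H^1(I_K,\bb{Z}/n)^{G_k}\to H^2(G_k,\bb{Z}/n)$ vanishes.

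It therefore suffices to kill this obstruction, and I would try to show outright that $H^2(G_k,\bb{Z}/n)=0$. Write $n=\prod q^{e}$ and reduce to the $q$-primary parts. Since $\bb{Z}/n\cong\bigoplus\bb{Z}/q^{e}$ and cohomology commutes with finite direct sums, it is enough to treat $\bb{Z}/q^e$ for each prime $q\mid n$, where $q\in\{2,3\}$.

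For $q=p$, the group $G_k$ of a perfect field of characteristic $p$ has $p$-cohomological dimension at most $1$. This follows from Artin--Schreier theory for the separable closure, and $k$ being perfect means $\overline{k}$ is its separable closure. Hence $H^2(G_k,\bb{Z}/p^e)=0$.

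For $q\neq p$, I am less sure what to use, since $k$ is only assumed perfect and $G_k$ need not have $q$-cohomological dimension $\le 1$. My first attempt would be to sidestep $H^2(G_k)$ and instead use the splitting $G_K\cong I_K\rtimes G_k$ available in the tame quotient. Concretely, the maximal tame quotient of $I_K$ is $\prod_{\ell\neq p}\bb{Z}_\ell(1)$ and $P_K$ is pro-$p$. Then $H^1(I_K,\bb{Z}/q^e)=\Hom(I_K,\bb{Z}/q^e)$ factors through the tame inertia $\bb{Z}_q(1)$. A $G_k$-invariant homomorphism $\bb{Z}_q(1)\to\bb{Z}/q^e$ can then be extended to the tame Galois group $G_K^{t}=I_K^t\rtimes G_k$. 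I would extend it by zero on a lift of $G_k$; such a lift exists because the tame extension splits, for instance via adjoining roots $\pi^{1/m}$ of a uniformizer. The resulting function is a crossed homomorphism, hence a homomorphism since the coefficients are trivial, and its $G_k$-invariance is exactly what makes it well defined.

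The same splitting argument in fact works uniformly for $q=p$ as well. A $G_k$-invariant homomorphism $I_K\to\bb{Z}/n$ extends to $G_K=I_K\rtimes\tilde G_k$ provided a complement $\tilde G_k$ of $I_K$ exists. Such a complement exists for Henselian DVRs by a theorem of Koch/Kuhlmann--Pank--Roquette, which shows the inertia sequence splits. This gives surjectivity directly.

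The main obstacle is justifying this splitting, or equivalently the vanishing of the transgression, in the generality of a Henselian DVR with perfect residue field. I would first cite the splitting of the ramification sequence $1\to I_K\to G_K\to G_k\to 1$ and, if that is unavailable, fall back on the cohomological-dimension argument for the $p$-part combined with the explicit tame splitting for the prime-to-$p$ part.
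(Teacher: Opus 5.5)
Your proposal is correct, and its skeleton is the paper's. The five-term Hochschild--Serre sequence reduces surjectivity to the vanishing of the transgression, i.e.\ to injectivity of inflation $H^2(G_k,\bb{Z}/n)\to H^2(G_K,\bb{Z}/n)$. For the $p$-primary part, both you and the paper use $\mathrm{cd}_p(G_k)\le 1$. The paper argues case by case for $n=3,4,6$, via $0\to\bb{Z}/2\to\bb{Z}/4\to\bb{Z}/2\to 0$ and $\bb{Z}/6\cong\bb{Z}/2\times\bb{Z}/3$; your primary decomposition does this in one step.

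The real difference is the prime-to-$p$ part. The paper simply cites Grothendieck's Brauer group paper for injectivity of inflation on $H^2$ when $p\nmid n$. You instead avoid $H^2(G_k)$ by exhibiting a complement to tame inertia and then extending $\phi$ by $\Phi(is)=\phi(i)$. The complement is $\mathrm{Gal}(K^{\mathrm{t}}/M)$, where $M=K(\pi_m : p\nmid m)$ for a compatible system $\pi_{mm'}^{m'}=\pi_m$ of roots of a uniformizer. This works over any Henselian DVR with perfect residue field: $M/K$ is totally ramified, so $M\cap K^{\mathrm{nr}}=K$, and $MK^{\mathrm{nr}}=K^{\mathrm{t}}$ because the residue field of $K^{\mathrm{nr}}$ is algebraically closed.

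Your route is self-contained, and it actually explains the cited input: a section of $G_K\to G_k$ makes inflation injective in every degree. The uniform variant, with a complement to all of $I_K$, removes every case distinction and works for arbitrary $n$, but needs a heavier citation. Note that Kuhlmann--Pank--Roquette give a complement to the wild inertia $P_K$, not to $I_K$; this can also be deduced from $\mathrm{cd}_p(G_K/P_K)\le 1$. You must compose it with your tame complement to get a complement to $I_K$.

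One small correction: since $g=is$ decomposes uniquely, $\Phi$ is automatically well defined. The $G_k$-invariance of $\phi$ is what makes $\Phi$ a homomorphism.
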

\begin{proof}
    By the Hochschild–Serre spectral sequence
    \[
        H^p(G_k,H^q(I_K,\bb{Z}/n))\implies H^{p+q}(G_K,\bb{Z}/n),
    \]
    it suffices to prove the inflation map
    \[
        f:H^2(G_k, \bb{Z}/n)\to H^2(G_K, \bb{Z}/n)
    \]
    is injective. The case when $p\nmid n$ follows from \cite[Equation (1.11)]{Grothendiec_Brauer_Group}.

    We are left with the case when $p\mid n$. We prove this case by case.
    \begin{enumerate}
        \item Suppose $p=3$ and $n=3$. Then by \cite[Lemma 59.63.1]{stacks-project}, $H^2(G_k,\bb{Z}/3)=0$ and $f$ is injective.
        \item Suppose $p=2$ and $n=4$. Again by \cite[Lemma 59.63.1]{stacks-project}, $H^2(G_k,\bb{Z}/2)=0$. Using the short exact sequence $0\to \bb{Z}/2\to \bb{Z}/4\to \bb{Z}/2\to 0$, we obtain $H^2(G_k,\bb{Z}/4)=0$ and $f$ is injective.
        \item Suppose $p=2$ and $n=6$. Then $H^2(G_k,\bb{Z}/2)=0$ and $H^2(G_k, \bb{Z}/3)\to H^2(G_K, \bb{Z}/3)$ is injective. Using the decomposition $\bb{Z}/6\cong \bb{Z}/2\times \bb{Z}/3$, we see that $f$ is injective.
        \item The case when $p=3$ and $n=6$ is completely analogous to the case when $p=2$ and $n=6$.
    \end{enumerate}
\end{proof}

\begin{proof}[Proof of Proposition \ref{twisting argument}]
    Using Lemma \ref{cocycle lifting}, we fix a lift of $\phi$ to $H^1(G_K,\Aut(A_{\overline{K}},\tau))$, which we also denote by $\phi$ by abuse of notation. We check the following:
    \begin{enumerate}
        \item $\Kum(A,\tau)=\Kum(A^{\phi},\tau^{\phi})$ since the image of $\phi$ is contained in $\langle\tau\rangle$.
        \item Let $\chi^{\phi}:G_K\to \Aut_{\bb{Q}_\ell}(H^1_{\et}(A_{\overline{K}}^\phi,\bb{Q}_\ell))$ be the action of $G_K$ on $H^1_{\et}(A_{\overline{K}}^\phi,\bb{Q}_\ell)$. Then for every $\sigma\in I_K$, we have $\chi^{\phi}(\sigma)=\alpha(\phi(\sigma))\cdot \chi(\sigma)=\text{id}$. Therefore, $H^1_{\et}(A_{\overline{K}}^{\phi},\bb{Q}_\ell)$ is unramified and $A^{\phi}$ has good reduction.
    \end{enumerate}
\end{proof}

\subsection{Canonical reduction of K3 surfaces}

In this sub-section, we review some general results on the good reduction of K3 surfaces. Let $\ell\neq p$ be a prime number. Let $X$ be a K3 surface over $K$ with good reduction. Let $\cal{X}$ be a smooth model of $X$. Then $\cal{X}_k$ is a K3 surface over $k$ since it has trivial canonical bundle and the same second Betti number as $X$.

Unlike abelian varieties, $X$ can have many non-isomorphic smooth models. However, their special fibres are always isomorphic: let $\cal{X}_1$ and $\cal{X}_2$ be two smooth models of $X$. By \cite[Proposition 4.7]{liedtke_2017_good_reduction_k3surfaces}, $\cal{X}_1$ and $\cal{X}_2$ are connected by finitely many birational maps defined away from finitely many curves on their special fibres. In particular, $\cal{X}_{1,k}$ and $\cal{X}_{2,k}$ are birational, thus isomorphic since they are K3 surfaces.

This result has the following important application. Let $X$ be a K3 surface over $K$ with good reduction after a finite and unramified extension. Let $L/K$ be an unramified extension, such that $X_L$ has a smooth model $\cal{X}$. Denote by $\cal{O}_L$ and $k_L$ respectively the ring of integers and the residue field of $L$. By passing to the Galois closure, we may assume $L/K$ to be Galois.

View $X_L$ as an open subscheme of $\cal{X}$. Then the action of the Galois group $G_{L/K}$ of $L/K$ on $X_L$ induces a rational action on $\cal{X}$. Also, this action is defined away from finitely many curves on $\cal{X}_{k_L}$ again by \cite[Proposition 4.7]{liedtke_2017_good_reduction_k3surfaces}. Therefore, it becomes regular if restricted to $\cal{X}_{k_L}$. Then the quotient $\cal{X}_{k_L}/G_{L/K}$ is a K3 surface over $k$\mycomment{why!}. This is independent of the choice of $\cal{X}$ and $L$, so we denote it by $X^\circ$.

\begin{defn}
    We call $X^\circ$ the canonical reduction of $X$.
\end{defn}

Note that $H^2_{\et}(X_{\overline{k}}^\circ,\bb{Q}_{\ell})$ has a natural $G_k$-action. We view it as a $G_K$-representation via the surjection $G_K\to G_k$.

To compute the canonical reduction, the following lemma is useful:

\begin{lem}\cite[c.f.][Lemma 6.2]{non-supersingular-kummer}\label{find the caonical reduction}
    Let $X$ be a K3 surface over $K$ with good reduction after a finite and unramified extension. Let $\cal{Y}$ be a flat projective scheme over $\cal{O}_K$ with normal fibres of dimension $2$, such that
    \begin{enumerate}
        \item the minimal desingularization of $\cal{Y}_K$ is $X$,
        \item there exists a finite Galois extension $L/K$ and a smooth model $\cal{X}$ of $X_L$ with a proper birational morphism $\cal{X}\to \cal{Y}_{\cal{O}_L}$.
    \end{enumerate}
    Then the minimal desingularization of $\cal{Y}_k$ is the canonical reduction $X^\circ$ of $X$.
\end{lem}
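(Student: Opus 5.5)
We would prove Lemma \ref{find the caonical reduction} by reducing to the case where the model $\cal{Y}$ is already dominated by a smooth model over $\cal{O}_K$ after an unramified base change, and then descending along the Galois action on special fibres. First we observe that the hypotheses are stable under base change to $\cal{O}_L$. Since $\cal{Y}_{\cal{O}_L}$ is flat and projective with normal fibres, and since $\cal{X}\to\cal{Y}_{\cal{O}_L}$ is proper and birational, the minimal desingularization of $\cal{Y}_{k_L}$ should be computed from $\cal{X}_{k_L}$. The key steps, in order, are as follows.

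\textbf{Step 1.} We show that the induced morphism $\cal{X}_{k_L}\to\cal{Y}_{k_L}$ is proper and birational, and that it contracts only $(-2)$-curves. Since both special fibres are integral surfaces (the latter being normal of dimension $2$), birationality follows by comparing generic points. Here we use that $\cal{X}\to \cal{Y}_{\cal{O}_L}$ is an isomorphism over a dense open set of $\cal{Y}_{\cal{O}_L}$ meeting the special fibre, which we obtain from Zariski's main theorem applied to the normal scheme $\cal{Y}_{\cal{O}_L}$. Because $\cal{X}_{k_L}$ is a K3 surface with trivial canonical bundle, any birational morphism onto a normal surface is crepant onto its image up to the contracted curves. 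Hence $\cal{X}_{k_L}$ is the minimal resolution of $\cal{Y}_{k_L}$, which therefore has only rational double points. Minimality holds automatically, since a K3 surface contains no $(-1)$-curves.

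\textbf{Step 2.} We make this identification $G_{L/K}$-equivariant. The group $G_{L/K}$ acts on $\cal{Y}_{\cal{O}_L}$ semilinearly and regularly, since the action comes from $\cal{Y}$ itself, and it acts rationally on $\cal{X}$. By uniqueness of the minimal resolution, the rational action on $\cal{X}_{k_L}$ agrees with the action lifted from $\cal{Y}_{k_L}$, so it is regular. This is consistent with the definition of $X^\circ$.

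\textbf{Step 3.} We descend. Because $\cal{Y}_{k_L}=\cal{Y}_k\otimes_k k_L$ with $G_{L/K}$ acting through the second factor, Galois descent for $k_L/k$ (with $L/K$ unramified, as in the setup of canonical reduction) applies. It shows that the minimal desingularization of $\cal{Y}_k$ becomes, after base change to $k_L$, the minimal desingularization of $\cal{Y}_{k_L}$; here we use that formation of the minimal resolution commutes with separable field extensions. Taking the quotient by $G_{L/K}$ then identifies the minimal resolution of $\cal{Y}_k$ with $\cal{X}_{k_L}/G_{L/K}=X^\circ$.

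\textbf{Main obstacle.} The hard part is Step 1: proving that the special fibre map is birational and that the resolution it gives is minimal. This requires controlling the exceptional locus of $\cal{X}\to\cal{Y}_{\cal{O}_L}$. We would first try to argue that this morphism is an isomorphism over the generic fibre away from the singular points of $\cal{Y}_K$. Its exceptional locus then has relative dimension at most $1$ over $\cal{O}_L$, so the special fibre $\cal{X}_{k_L}$ cannot be contracted to a curve. Irreducibility of $\cal{X}_{k_L}$ then forces birationality onto $\cal{Y}_{k_L}$.

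If $L/K$ is allowed to be ramified, Step 3 needs modification. In that case we would replace $\cal{Y}_{\cal{O}_L}$ by its normalization and follow \cite[Lemma 6.2]{non-supersingular-kummer}. In the situation where canonical reduction is defined, however, unramified $L$ suffices.
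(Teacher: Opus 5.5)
The proposal has a genuine gap. Steps 1--3 are fine when $L/K$ is unramified. In particular, Step 2 correctly shows that $G_{L/K}$ acts on $\mathcal{X}_{k_L}$ by the unique lift of its action on $\mathcal{Y}_{k_L}=\mathcal{Y}_k\otimes_k k_L$.

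The gap is in Step 3. There the equality $\mathcal{X}_{k_L}/G_{L/K}=X^\circ$ holds \emph{by definition} only when $L/K$ is unramified, but the lemma is stated, and needed, for an arbitrary finite Galois $L/K$. In the application (Theorem \ref{potential good redn}, used in Proposition \ref{general criterion}), $L$ is the extension produced by Artin's simultaneous resolution, which is in general ramified. That is exactly why the lemma is phrased this way.

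Your closing claim that ``unramified $L$ suffices'' conflates two different extensions. The unramified $L'$ in the definition of $X^\circ$ gives \emph{some} smooth model of $X_{L'}$. Nothing guarantees that this model has a proper birational morphism to $\mathcal{Y}_{\mathcal{O}_{L'}}$, so you cannot replace the given $L$ by $L'$. Normalizing $\mathcal{Y}_{\mathcal{O}_L}$ does not help either: it is already normal (flat over $\mathcal{O}_L$ with normal fibres, and $k$ is perfect), and normality was never the issue.

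What is missing is the real content of the lemma: identifying the special fibre of a smooth model over a ramified extension, with its Galois action, with the one used to define $X^\circ$. The paper does this using Liedtke--Matsumoto, in three steps.
\begin{enumerate}
\item Since $H^2_{\et}(X_{\overline{K}},\mathbb{Q}_\ell)$ is unramified, there is a smooth model $\mathcal{X}'$ of $X_L$ to which the $I_{L/K}$-action extends, and this action is trivial on $\mathcal{X}'_{k_L}$.
\item The quotient $\mathcal{X}'/I_{L/K}$ is a smooth model over $\mathcal{O}_F$, where $F$ is the maximal unramified subextension, and its special fibre is $\mathcal{X}'_{k_L}$. Hence $X^\circ=\mathcal{X}'_{k_L}/G_{F/K}$ by definition.
\item Finally, $\mathcal{X}_{k_L}\cong\mathcal{X}'_{k_L}$ $G_{L/K}$-equivariantly, via the birational map that is the identity on generic fibres.
\end{enumerate}

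You could also close the gap within your own framework. Your Step 2 already shows that $I_{L/K}$ acts trivially on $\mathcal{X}_{k_L}$. So $\mathcal{X}_{k_L}/G_{L/K}=\mathcal{X}_{k_L}/G_{k_L/k}$, which by your descent argument is the minimal resolution of $\mathcal{Y}_k$. It remains to compare $\mathcal{X}_{\mathcal{O}_M}$ with the base change to $M=LL'$ of an unramified smooth model over $\mathcal{O}_{L'}$. Use \cite[Proposition 4.7]{liedtke_2017_good_reduction_k3surfaces} to get an isomorphism of special fibres compatible with the rational $G_{M/K}$-actions. Then take quotients in stages, first by $G_{M/L}$ (respectively $G_{M/L'}$) and then by $G_{L/K}$ (respectively $G_{L'/K}$). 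Either route has to be supplied; as written, your proof covers only a case that does not arise in the paper's application.
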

\begin{proof}
    Let $G_{L/K}$ and $I_{L/K}$ be the Galois group and the inertia group of $L/K$ respectively. Since $X$ has good reduction after a finite and unramified extension, $H^2_{\et}(X_{\overline{K}},\bb{Q}_\ell)$ is unramified. By \cite[Proposition 4.5]{liedtke_2017_good_reduction_k3surfaces} and 
    \cite[Proposition 5.5]{liedtke_2017_good_reduction_k3surfaces}, there exists a smooth model $\cal{X}'$ of $X_L$ where the action of $I_{L/K}$ on $X_L$ extends to $\cal{X}'$. Also, by \cite[Proposition 5.8]{liedtke_2017_good_reduction_k3surfaces}, the restriction of the action of $I_{L/K}$ on $\cal{X}_{k_L}'$ is trivial. Let $F/K$ be the maximal unramified subextension of $L/K$. Then by \cite[Corollary 5.12]{liedtke_2017_good_reduction_k3surfaces}, the quotient $\cal{X}'/I_{L/K}$ is smooth over $\cal{O}_F$ with special fibre $\cal{X}'_{k_L}$.

    Let $G_{F/K}$ be the Galois group of $F/K$. Then the action of $G_{F/K}$ on $(\cal{X}'/I_{L/K})_F$ restricts to a rational action on $\cal{X}'/I_{L/K}$, and therefore a regular action on the special fibre $(\cal{X}'/I_{L/K})_{k_F}\cong \cal{X}'_{k_L}$. By definition, $X^\circ=\cal{X}'_{k_L}/G_{F/K}$. Also, the birational map $\cal{X}\dashrightarrow \cal{X}'$ induced by the identity morphism on the generic fibre restricts to a $G_{L/K}$-isomorphism $\cal{X}_{k_L}\to \cal{X}'_{k_L}$. Therefore, $X^\circ\cong \cal{X}_{k_L}/G_{F/K}$.

    Since $X$ is the minimal desingularization of $\cal{Y}_K$, $X_L\to \cal{Y}_L$ is $G_{L/K}$-equivariant. Therefore, $\cal{X}\to \cal{Y}_{\cal{O}_L}$ and its restriction to the special fibre $\cal{X}_{k_L}\to \cal{Y}_{k_L}$ are $G_{L/K}$-equivariant. After taking the $G_{F/K}$-quotient, we get $X^\circ\to \cal{Y}_k$. In particular, $X^\circ$ is a K3 surface, so this is the minimal desingularization.
\end{proof}

The notion of canonical reduction has the following application, which will be very important for us.

\begin{thm}\cite{Chiarellotto_2019_neron_ogg_shafarevich_criterion_k3surface}\label{comparison theorem}
    Let $X$ be a K3 surface over $K$ that has good reduction after a finite unramified extension. Then $X$ has good reduction if and only if there is a $G_K$-equivariant isomorphism
    \[
        H^2_{\et}(X_{\overline{K}},\bb{Q}_{\ell})\cong H^2_{\et}(X_{\overline{k}}^\circ,\bb{Q}_{\ell}).
    \]
\end{thm}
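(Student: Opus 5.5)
The plan is to prove the two implications separately, working throughout with a finite unramified Galois extension $L/K$ over which $X_L$ has a smooth model $\cal{X}$.

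\textbf{Good reduction gives the isomorphism.} Suppose first that $X$ has a smooth model $\cal{X}$ over $\cal{O}_K$. Then I may take $L=K$ in the construction of the canonical reduction, so $X^\circ\cong\cal{X}_k$; this construction is independent of choices by \cite[Proposition 4.7]{liedtke_2017_good_reduction_k3surfaces}. The smooth proper base change theorem then gives a $G_K$-equivariant specialization isomorphism
\[
H^2_{\et}(X_{\overline{K}},\bb{Q}_\ell)\cong H^2_{\et}(\cal{X}_{\overline{k}},\bb{Q}_\ell),
\]
which proves this direction.

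\textbf{The isomorphism gives good reduction: setup.} Let $\cal{X}$ be a smooth model of $X_L$. Smooth proper base change gives a specialization isomorphism
\[
\mathrm{sp}:H^2_{\et}(X_{\overline{K}},\bb{Q}_\ell)\to H^2_{\et}(\cal{X}_{\overline{k}},\bb{Q}_\ell)=H^2_{\et}(X^\circ_{\overline{k}},\bb{Q}_\ell).
\]
It is $G_L$-equivariant, but in general it need not be $G_K$-equivariant, for the following reason. Each $g\in G_{L/K}$ acts on $X_L$ only by a birational self-map of $\cal{X}$. Such a map is a composite of flops along $(-2)$-curves in the special fibre. Hence the regular action of $g$ on $\cal{X}_{k_L}$, which defines $X^\circ$, differs from the action transported through $\mathrm{sp}$ by an element $w_g$ of the Weyl group $W$ generated by reflections in $(-2)$-curve classes on $\cal{X}_{\overline{k}}$. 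The assignment $g\mapsto w_g$ is a $1$-cocycle $G_{L/K}\to W$.

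\textbf{The isomorphism gives good reduction: descent.} The strategy is to show that the cocycle is a coboundary. Once it is, I can change $\cal{X}$ by flops, which corresponds to changing the chamber by the element of $W$ realizing the coboundary. This produces a model on which the ample cone of the special fibre contains the specialization of a $G_{L/K}$-invariant ample class of $X_L$. On such a model the $G_{L/K}$-action extends to a regular action on $\cal{X}$ together with a linearized ample bundle. Galois descent along the unramified extension $\cal{O}_L/\cal{O}_K$ then yields a smooth model over $\cal{O}_K$.

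\textbf{Main obstacle.} The hardest step is passing from an abstract $G_K$-equivariant isomorphism to the vanishing of the cocycle in $W$. My first attempt would compare Néron–Severi lattices. An abstract isomorphism of $G_K$-representations forces $\mathrm{sp}$ and the canonical-reduction action to agree on the invariants of $\mathrm{NS}\otimes\bb{Q}_\ell$, and hence on a positive cone. I would then use that $W$ acts simply transitively on chambers, so a twisted action that preserves an ample chamber must come from a trivial cocycle. The delicate point is that an abstract isomorphism does not fix $\mathrm{sp}$ itself. I would compensate by comparing characters of finite-order elements: twisting by $w_g$ changes traces on the span of the $(-2)$-classes, so I would argue that equal characters force $w_g$ to be trivial up to coboundary.
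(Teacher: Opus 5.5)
The paper gives no proof of this statement; it is quoted from \cite{Chiarellotto_2019_neron_ogg_shafarevich_criterion_k3surface}. Your forward direction is fine. For the converse, measuring the failure of the $G_{L/K}$-action to extend by a Weyl-group cocycle and then descending is the right kind of strategy. But there is a genuine gap exactly at the step you flag.

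\textbf{The key step is asserted, not proved.} An abstract $G_K$-isomorphism does not make the transported action and the canonical-reduction action ``agree on invariants''. It only gives equality of characters, hence of the dimensions of invariant subspaces. Also, ``twisting by $w_g$ changes traces'' is not an argument: twisting by a coboundary changes no traces, and ``equal characters force a coboundary'' is the whole content of the converse. A workable route is the following.
\begin{enumerate}
\item Choose a $G_{L/K}$-invariant ample $H$ on $X_L$ and a model on which $\mathrm{sp}(H)$ is nef. Then each $w_g$ fixes $\mathrm{sp}(H)$, so it lies in the \emph{finite} Weyl group of the root system $R$ of $(-2)$-curves orthogonal to $\mathrm{sp}(H)$.
\item The two actions agree on $\Lambda^\perp$, where $\Lambda=\mathrm{span}(R)$. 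Krull--Schmidt (or semisimplification, as in Proposition~\ref{general criterion}) then gives $\Lambda_{w\sigma}\cong\Lambda_{\sigma}$, with $\sigma$ the diagram action. In particular $\dim\Lambda^{w\sigma(G)}=\dim\Lambda^{\sigma(G)}$.
\item You then need a lemma about root systems, in two parts.
\begin{enumerate}
\item Every lift $g\mapsto w'_g\sigma_g$ has invariants of dimension at most $\dim\Lambda^{\sigma(G)}$. To see this, note that an open cell of the invariant subspace lies in a single face. Some $x\in W$ moves that face into the closed fundamental chamber, and there the conjugated lift must act as $\sigma$.
\item Suppose $U=\Lambda^{w\sigma(G)}$ lay in a wall. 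Correct each $w_g\sigma_g$ by the unique element of the parabolic subgroup fixing $U$ that makes it preserve a chamber of $R\cap U^\perp$. This gives a lift fixing $U$ and also a nonzero vector of $\mathrm{span}(R\cap U^\perp)$, contradicting (a).
\end{enumerate}
\item Hence $U$ contains a regular vector, the twisted action fixes a chamber, and $w$ is a coboundary.
\end{enumerate}

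\textbf{The descent step fails as written.} You ask for a model on which a $G_{L/K}$-invariant ample class of $X_L$ specializes into the ample cone of the special fibre. If $\mathrm{sp}(\mathrm{NS}(X_{\overline{K}}))^\perp$ contains a $(-2)$-class, every such specialization lies on that wall, and flops (which act through $W$) cannot move it off. This happens even when $X$ has good reduction over $K$. For example, take $\rho(X_{\overline{K}})=1$ and a smooth model that is a small resolution of a quartic family whose total space has a threefold node in the special fibre. Such a model is only an algebraic space with no relatively ample bundle.

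The correct endgame works with the $G_{L/K}$-equivariant RDP model $\cal{Y}$ defined by $H$. Its simultaneous resolutions obtained from $\cal{X}$ by flops are indexed by the Weyl chambers of $R$. A trivial cocycle means $G_{L/K}$ fixes one of these chambers, so it acts regularly on the corresponding resolution. That smooth algebraic space descends along the finite \'etale cover $\Spec\cal{O}_L\to\Spec\cal{O}_K$. No linearized ample bundle is needed, and in general none exists.
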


\section{Computation of the Galois actions}\label{computation section}

Let $A$ be an abelian surface over $K$ with ordinary good reduction. Let $\tau$ be an automorphism of $A$, such that the minimal desingularization $\Kum(A,\tau)$ of $A/\tau$ is a K3 surface. Denote by $S_K$ and $S_k$ respectively the sets of exceptional divisors of the minimal desingularization of $A_{\overline{K}}/\tau$ and $\cal{A}_{\overline{k}}/\tau$. We view both of them as $G_K$-sets: $G_K$ acts on $S_K$ naturally, and the $G_K$-action on $S_k$ is given by the surjection $G_K\to G_k$ and the natural $G_k$-action on $S_k$.

To prove Theorem \ref{main theorem}, we need to compute the singularities of $A_{\overline{K}}/\tau$ and $\cal{A}_{\overline{k}}/\tau$ and how $G_K$ permutes $S_K$ and $S_k$. We summarize the results in the next proposition. We also include the results in the order $2$, $p=2$ case with $\cal{A}_k$ ordinary from \cite[Section 2]{non-supersingular-kummer}, which will be helpful in the order $4$ and order $6$ cases.

\begin{prop}\label{computation summary}
    \begin{enumerate}
        \item Suppose $\tau$ has order $2$ and $p=2$. Then the singularities of $A_{\overline{K}}/\tau$ and $\cal{A}_{\overline{k}}/\tau$ are respectively $16A_1$ and $4D_4^1$. Also, we have $G_K$-equivariant bijections
        \begin{align*}
            S_K & \cong A[2](\overline{K}),\\
            S_k & \cong \cal{A}[2]^\circ(\overline{K}) \times \cal{A}[2](\overline{k}).
        \end{align*}
        \item Suppose $\tau$ has order $3$ and $p=3$. Then the singularities of $A_{\overline{K}}/\tau$ and $\cal{A}_{\overline{k}}/\tau$ are respectively $9A_2$ and $3E_6^1$. Also, we have $G_K$-equivariant bijections
        \begin{align*}
            S_K & \cong A[1-\tau](\overline{K})\times \{\omega,\omega^2\},\\
            S_k & \cong \cal{A}[1-\tau](\overline{k})\times D(\cal{A}[1-\tau]^\circ)(\overline{k})\times \underline{2}.
        \end{align*}
        \item Suppose $\tau$ has order $4$ and $p=2$. Then the singularities of $A_{\overline{K}}/\tau$ and $\cal{A}_{\overline{k}}/\tau$ are respectively $4A_3+6A_1$ and $2E_7^3+D_4^1$. Also, we have $G_K$-equivariant bijections
        \begin{align*}
            S_K & \cong A[1-\tau](\overline{K}) \times \{1, \pm i\} \sqcup (A[2]({\overline{K}})\setminus A[1-\tau](\overline{K}))/\tau,\\
            S_k & \cong \underline{14}\sqcup (\mathcal{A}[2]^\circ(\overline{K})\times(\cal{A}[2](\overline{k})\setminus\cal{A}[1-\tau](\overline{k})))/\tau.
        \end{align*}
        \item Suppose $\tau$ has order $6$ and $p=2$. Then the singularities of $A_{\overline{K}}/\tau$ and $\cal{A}_{\overline{k}}/\tau$ are respectively $A_5+4A_2+5A_1$ and $E_6^1+4A_2+D_4^1$. Also, we have $G_K$-equivariant bijections
        \begin{align*}
            S_K & \cong \{1, \pm \omega, \pm \omega^2\} \sqcup (A[1-\tau^2](\overline{K})\setminus 0)/\tau \times \{\omega, \omega^2\} \sqcup (A[2]({\overline{K}})\setminus 0)/\tau,\\
            S_k & \cong \{1,\omega,\omega^2\}\times\underline{2}\sqcup (A[1-\tau^2](\overline{K})\setminus 0)/\tau\times \{\omega, \omega^2\}\sqcup (\cal{A}[2]^\circ(\overline{K})\times (\cal{A}[2](\overline{k})\setminus 0))/\tau.
        \end{align*}
        \item Suppose $\tau$ has order $6$ and $p=3$. Then the singularities of $A_{\overline{K}}/\tau$ and $\cal{A}_{\overline{k}}/\tau$ are respectively $A_5+4A_2+5A_1$ and $E_7^1+E_6^1+5A_1$. Also, we have $G_K$-equivariant bijections
        \begin{align*}
            S_K & \cong \{1, \pm \omega, \pm \omega^2\} \sqcup (A[1-\tau^2](\overline{K})\setminus 0)/\tau \times \{\omega, \omega^2\} \sqcup (A[2]({\overline{K}})\setminus 0)/\tau,\\
            S_k & \cong \underline{7}\sqcup (D(\cal{A}[1-\tau^2]^\circ)(\overline{k})\times(\cal{A}[1-\tau^2]
            (\overline{k})\setminus 0)
            )/\tau\times \underline{2}\sqcup (A[2](\overline{K})\setminus 0)/\tau.
        \end{align*}
    \end{enumerate}
\end{prop}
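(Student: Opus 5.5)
The plan is to handle the generic fibre and the special fibre separately, case by case, always with the same three steps: (a) find the points of $A_{\overline{K}}$ (resp. $\cal{A}_{\overline{k}}$) with nontrivial stabiliser in $\langle\tau\rangle$; (b) identify the singularity type of each orbit; (c) label the exceptional curves Galois-equivariantly. Case (1) is quoted from \cite[Section 2]{non-supersingular-kummer}.

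\textbf{Generic fibre.} Here $\chara K=0$, so every singularity is a cyclic quotient singularity. Since $\tau$ is symplectic, a point with stabiliser $\langle\tau^j\rangle$ of order $m$ gives an $A_{m-1}$ singularity. The fixed loci come from the table in Subsection \ref{fixed points and degrees}.
\begin{enumerate}
\item For order $3$, $\tau$ fixes the $9$ points of $A[1-\tau]$, giving $9A_2$.
\item For order $4$, the $4$ points of $A[1-\tau]$ give $4A_3$, and the $12$ remaining points of $A[2]$ form $6$ orbits, giving $6A_1$.
\item For order $6$, the origin gives $A_5$, the $8$ nonzero points of $A[1-\tau^2]$ form $4$ orbits giving $4A_2$, and the $15$ nonzero points of $A[2]$ form $5$ orbits giving $5A_1$.
\end{enumerate}
The exceptional chain of an $A_{m-1}$ point $x$ is determined by the eigencharacters of the stabiliser on the tangent space $T_x$. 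The curves in the chain are naturally indexed by nontrivial characters, i.e.\ by the primitive or nontrivial $m$-th roots of unity, up to the symmetry of the chain. This is why factors such as $\{\omega,\omega^2\}$ and $\{1,\pm i\}$ appear: $G_K$ acts on these roots of unity through its action on $\mu_m(\overline{K})$. I would make this precise by blowing up $x$ explicitly, or by toric geometry, and checking that $\sigma\in G_K$ sends the curve labelled $\zeta$ over $x$ to the curve labelled $\sigma(\zeta)$ over $\sigma(x)$.

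\textbf{Special fibre.} Here the wild points are non-tame quotient singularities, and Artin's classification (notation $E_6^1$, $D_4^1$, etc.) applies. Ordinarity gives $\cal{A}[p]\cong\mu_p^2\times(\bb{Z}/p)^2$ étale-locally. Near a fixed point, I would use the formal group of $\cal{A}_{\overline{k}}$, which is $\widehat{\bb{G}}_m^2$ by ordinarity, to write $\tau$ explicitly; for instance in the order-$3$, $p=3$ case it is a multiplicative action on $k[[x,y]]$. I would then compute the invariant ring and recognise the resulting equation in Artin's lists: $E_6^1$ for $p=3$, and $D_4^1$, $E_7^3$ for $p=2$. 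The degree table gives the number of fixed points: $3$ in case (2), $2+4$ in case (3), and so on.

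For the labelling on the special fibre, the natural approach is via a degeneration argument. The model $\cal{A}/\tau$ over $\cal{O}_K$ specialises the generic singular points to the special ones, and a Kummer-type computation as in \cite{non-supersingular-kummer} shows that the generic points specialising to a given special point are parametrised by $\cal{A}[\cdot]^\circ(\overline{K})$. Identifying the exceptional curves then reduces to a torsor over $D(\cal{A}[\cdot]^\circ)(\overline{k})$, which is where Corollary \ref{group scheme embedding special case} enters; Lemma \ref{group scheme embedding} gives the Cartier dual identification in general. Where the special-fibre count of curves contains pieces carrying trivial action (the $\underline{n}$ factors), I would show those curves are defined over $k$ because they are canonically attached to the reduction of $0$ or to $k$-rational data.

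The main obstacle is this last step: the explicit wild resolutions and, above all, the Galois-equivariant identification of exceptional curves in $D_4^1$, $E_6^1$, $E_7^3$ and $E_7^1$ with the stated sets. I would first work out the order-$3$ case, mimicking the $D_4^1$ computation of \cite[Section 2]{non-supersingular-kummer}. I would then deduce the order-$6$ cases by factoring $A\to A/\tau^3\to A/\tau$ (or through $\tau^2$), using the order-$2$ and order-$3$ results as already known.
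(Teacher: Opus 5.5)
Your generic-fibre argument is correct, and it is a more conceptual version of the paper's explicit blow-ups. In characteristic $0$ a symplectic cyclic stabiliser of order $m$ gives an $A_{m-1}$ point. Labelling the chain by the eigencharacters of the stabiliser on $T_x$ reproduces $\{\omega,\omega^2\}$, $\{1,\pm i\}$ and $\{1,\pm\omega,\pm\omega^2\}$ as $G_K$-sets. Determining the wild singularity types through $\widehat{\cal{A}}_{\overline{k}}\cong\widehat{\bb{G}}_m^2$ and explicit invariants is also what the paper does.

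\textbf{The gap.} The genuine gap is the Galois-equivariant labelling of the special-fibre exceptional curves. You flag this as the main obstacle but do not supply it, and the route you sketch would not work.
\begin{enumerate}
\item The $G_K$-action on $S_k$ is the $G_k$-action coming from the $k$-structure of $(\cal{A}_k,\tau)$.
\item A degeneration argument through a simultaneous resolution compares $S_K$ with $S_k$ only after an uncontrolled finite extension $L/K$. So it sees only $G_L$, hence at best $G_{k_L}$.
\item It also compares them only at the level of classes: a generic exceptional curve specialises to an effective combination of special ones, and a permutation representation does not determine the permutation set. So it cannot produce the $G_k$-set $S_k$.
\item Knowing that the generic singular points over a special one form an $\cal{A}[1-\tau]^\circ(\overline{K})$-coset says nothing directly about how $G_k$ permutes the six curves of the special $E_6$.
\item Corollary \ref{group scheme embedding special case} requires $\omega\in K$, which is not assumed in this proposition. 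Without it, $\cal{A}[1-\tau]^\circ(\overline{K})$ and $D(\cal{A}[1-\tau]^\circ)(\overline{k})$ differ by a twist by the mod-$3$ cyclotomic character. That is exactly why the statement is phrased with $D(\cal{A}[1-\tau]^\circ)(\overline{k})$.
\end{enumerate}

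\textbf{The missing idea in case (2).} The paper works entirely on the special fibre.
\begin{enumerate}
\item $G_k$ acts on $\widehat{\cal{A}}_{\overline{k}}$ commuting with $\tau$. Hence it acts through the centraliser of $\tau$ in $\Aut(X^*(\widehat{\cal{A}}_{\overline{k}}))\cong\GL_2(\bb{Z}_3)$, which is $\bb{Z}_3[\omega]^\times$.
\item Both the action on the $E_6$ diagram and the action on $D(\cal{A}[1-\tau]^\circ)(\overline{k})$ are induced from this. The latter set is the cokernel of $1-\tau$ on $X^*(\widehat{\cal{A}}_{\overline{k}})$.
\item Both actions land in groups of order $2$. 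Since $1+\mathfrak{m}$ is pro-$3$, with $\mathfrak{m}$ the maximal ideal of $\bb{Z}_3[\omega]$, both factor through $\bb{Z}_3[\omega]^\times/(1+\mathfrak{m})\cong\bb{F}_3^\times$.
\item It then suffices to compare them on $-1$. This means checking, by an explicit two-step blow-up, that $[-1]$ swaps the two long arms of $E_6$.
\end{enumerate}

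\textbf{The remaining cases.}
\begin{enumerate}
\item Case (5) follows from case (2) by passing through $\cal{A}_{\overline{k}}/\tau^2$.
\item In case (4), ``factoring through $\tau^2$'' alone does not give the $E_6^1$ labelling at the origin. Let $Y$ be the resolution of $\cal{A}_{\overline{k}}/\tau^2$; it has a tame $A_2$ point at the origin whose two curves are labelled by $\{\omega,\omega^2\}$. You need that $\tau$ preserves these two curves. You also need that their images in $Y/\tau$ meet in a $D_4^1$ point, so that after resolving it they become the ends of the long arms.
\item For the trivial pieces $\underline{14}$ and $\underline{7}$, the precise reason is that the $E_7$ diagram has no non-trivial automorphisms and the corresponding singular points are $k$-rational.
\item The $D_4^1$ pieces in cases (3) and (4) come from the order-$2$ labelling by $\cal{A}[2]^\circ(\overline{K})$. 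This labelling lives on $\cal{A}_{\overline{k}}/\tau^2$ in case (3) and on $\cal{A}_{\overline{k}}/\tau^3$ in case (4), and you then take $\tau$-orbits.
\end{enumerate}
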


The rest of this section is devoted to the proof of Proposition \ref{computation summary}. Denote by $q:A\to A/\tau$ and $q_k:\cal{A}_k\to\cal{A}_k/\tau$ the canonical projection. Let $O$ and $ O_k$ be the origin of $A_{\overline{K}}$ and $\cal{A}_{\overline{k}}$ respectively.

Consider the points on $A_{\overline{K}}$ whose stabilizer subgroup in $\langle\tau\rangle$ is non-trivial. These are the points on $A_{\overline{K}}$ fixed by a non-trivial power of $\tau$. The singularities on $A_{\overline{K}}/\tau$ are given by their images under $q$. The same statement holds if we replace $A_{\overline{K}}$ by $\cal{A}_{\overline{k}}$ and $q$ by $q_k$.

\subsection{The order $2$, $p=2$ case}

By \cite[Lemma 3.5]{generalized_kummer_katsura}, $\tau=-1$ and $\Kum(A,\tau)=\Kum(A)$. We summarize the results from \cite[Section 2]{non-supersingular-kummer}, which prove Proposition \ref{computation summary}(1).

\subsubsection{Singularities and Galois action on the generic fibre}\label{Singularities and Galois action on the generic fibre order 2}

We now describe the singularities of $A_{\overline{K}}/\tau$ and how $G_K$ acts on $S_K$.

\begin{rem}
    The argument only requires $\tau=-1$ and that the base characteristic is different from $2$. In particular, this applies to
    \begin{enumerate}
        \item $A_{\overline{K}}/\tau^2$ in the order $4$, $p=2$ case,
        \item $A_{\overline{K}}/\tau^3$ in the order $6$, $p=2$ case,
        \item $A_{\overline{K}}/\tau^3$ and $\cal{A}_{\overline{k}}/\tau^3$ in the order $6$, $p=3$ case.
    \end{enumerate}
\end{rem}

The singularities on $A_{\overline{K}}/\tau$ are given by the images of $A[1-\tau](\overline{K})=A[2](\overline{K})$ under $q$. Therefore, there are in total sixteen singularities on $A_{\overline{K}}/\tau$. By translation, their local rings are isomorphic. In particular, these singularities have the same type, which is $A_1$ by \cite[Lemma 3.14]{generalized_kummer_katsura}. This also shows that there is a $G_K$-equivariant bijection $S_K\cong A[2](\overline{K})$.

\subsubsection{Singularities and Galois action on the special fibre}\label{Singularities and Galois action on the special fibre order 2}

We now describe the singularities of $\cal{A}_{\overline{k}}/\tau$ and how $G_k$ acts on $S_k$.

There are four singularities on $\cal{A}_{\overline{k}}/\tau$ given by the images of $\cal{A}[2](\overline{k})$ under $q_k$. By translation, these singularities have the same type, which is $D_4^1$ by \cite[Section 2]{non-supersingular-kummer}. These singularities are indexed $G_k$-equivariantly by $\cal{A}[2](\overline{k})$. By translation, it suffices to compute how $G_k$ permutes the exceptional divisors of the $D_4^1$ singularity at $q_k(O_k)$. By \cite[Proposition 2.1]{non-supersingular-kummer}, they are indexed $G_k$-equivariantly by $\cal{A}^\vee[2](\overline{k})$. By \cite[Section 4.1]{non-supersingular-kummer}, $\cal{A}^\vee[2](\overline{k})$ is isomorphic to $\cal{A}[2]^\circ(\overline{K})$. This shows that there is a $G_k$-equivariant bijection $S_k\cong \cal{A}[2]^\circ(\overline{K}) \times \cal{A}[2](\overline{k})$.

\subsection{The order $3$, $p=3$ case}

We now prove Proposition \ref{computation summary}(2).

\subsubsection{Singularities and Galois action on the generic fibre}\label{Singularities and Galois action on the generic fibre order 3}

We first describe the singularities of $A_{\overline{K}}/\tau$ and how $G_K$ acts on $S_K$.

\begin{rem}
    The argument only requires $\tau$ is symplectic, $\tau^2+\tau+1=0$, and that the base characteristic is different from $3$. In particular, this applies to
    \begin{enumerate}
        \item $A_{\overline{K}}/\tau^2$ and $\cal{A}_{\overline{k}}/\tau^2$ in the order $6$, $p=2$ case,
        \item $A_{\overline{K}}/\tau^2$ in the order $6$, $p=3$ case.
    \end{enumerate}
\end{rem}

We first show that the singularities of $A_{\overline{K}}/\tau$ are $9A_2$. These are given by the images of $A[1-\tau](\overline{K})$ under $q$. Therefore, there are nine singularities of the same type by translation. It suffices to show that the singularity at $q(O)$ is of type $A_2$.

Let $\widehat{\cal{O}}_{A,O}$ be the completed local ring of $A$ at $O$. Since $\tau^2+\tau+1=0$ and $\tau$ is symplectic, we may choose $x,y\in \widehat{\cal{O}}_{A,O}$ such that $\widehat{\cal{O}}_{A,O}\cong K[\![x,y]\!]$ and
\[
    \tau:x\mapsto y\mapsto -x-y.
\]
Let $z=-x-y$. Then 
\[
    \widehat{\cal{O}}_{A,O}=\frac{K[\![x,y,z]\!]}{(x+y+z)},
\]
\[
    \tau:x\mapsto y\mapsto z.
\]
Let $u=xy+xz+yz,v=3xyz,w=xy^2+yz^2+x^2z$. Then the completed local ring of $A/\tau$ at $q(O)$ is
\[
    \widehat{\cal{O}}_{A,O}^{\tau=\text{id}}=\frac{K[\![u,v,w]\!]}{(u^3 + v^2 + vw + w^2)},
\]
which is indeed an $A_2$ singularity.

Next, we show that there is a $G_K$-equivariant bijection
\[
    S_K \cong A[1-\tau](\overline{K})\times \{\omega,\omega^2\}.
\]
The singularities on $A_{\overline{K}}/\tau$ are indexed $G_K$-equivariantly by $A[1-\tau](\overline{K})$. By translation, it suffices to show that the exceptional divisors of the $A_2$ singularity at $q(O)$ are indexed $G_K$-equivariantly by $\{\omega,\omega^2\}$.

The formal neighbourhood of $A_{\overline{K}}/\tau$ at $q(O)$ is $\Spec \widehat{\cal{O}}_{A,O}^{\tau=\text{id}}$. We blow it up at $(u,v,w)$, and focus on the affine chart $D(u)$ of the blowup. Let $v'=v/u, w'=w/u$. Then
\[
    D(u)=V(u^2+v'^2+v'w'+w'^2)
\]
with two exceptional divisors
\[
    V(u,v'^2+v'w'+w'^2)=V(u,v'-\omega w')\cup V(u,v'-\omega^2w'),
\]
which are indeed indexed $G_K$-equivariantly by $\{\omega,\omega^2\}$.

\subsubsection{Singularities and Galois action on the special fibre}\label{Singularities and Galois action on the special fibre order 3}

We now describe the singularities of $\cal{A}_{\overline{k}}/\tau$ and how $G_k$ acts on $S_k$.

\begin{rem}
    The argument only requires $\tau^2+\tau+1=0$, and that the base characteristic is $3$. In particular, this applies to $\cal{A}_{\overline{k}}/\tau^2$ in the order $6$, $p=3$ case.
\end{rem}

We first show that the singularities of $\cal{A}_{\overline{k}}/\tau$ are $3E_6^1$. These are given by the images of $\cal{A}[1-\tau](\overline{k})$ under $q_k$. Therefore, there are three singularities of the same type by translation. It suffices to show that the singularity at $q_k(O_k)$ is of type $E_6^1$.

To compute the $\tau$-action formally locally, let $\widehat{\cal{A}}_{\overline{k}}$ be the formal group of $\cal{A}_{\overline{k}}$. Let $X^*(\widehat{\cal{A}}_{\overline{k}})=\Hom(\widehat{\cal{A}}_{\overline{k}},\widehat{\bb{G}}_m)$ be the character group of $\widehat{\cal{A}}_{\overline{k}}$. By \cite[Lemma 4]{kummer_in_char_2_katsura}, $\widehat{\cal{A}}_{\overline{k}}\cong \widehat{\bb{G}}^{\oplus2}_m$, and $X^*(\widehat{\cal{A}}_{\overline{k}})$ is a free $\bb{Z}_3$-module of rank $2$. Then $\Aut(\widehat{\cal{A}}_{\overline{k}})\cong\Aut(X^*(\widehat{\cal{A}}_{\overline{k}}))\cong\GL_2(\bb{Z}_3)$.

We view $X^*(\widehat{\cal{A}}_{\overline{k}})$ as a free $\bb{Z}_3[\omega]$-module of rank $1$ where $\omega$ acts as $\tau$. We may therefore identify $X^*(\widehat{\cal{A}}_{\overline{k}})$ with $\bb{Z}_3[\omega]$ and choose $\{1,\omega\}$ as a basis of $X^*(\widehat{\cal{A}}_{\overline{k}})$ over $\bb{Z}_3$. Then $\tau$ corresponds to $\begin{psmallmatrix} 0 & -1 \\ 1 & -1 \end{psmallmatrix}\in \GL_2(\bb{Z}_3)$. Explicitly, the coordinate ring $\widehat{\cal{O}}_{\cal{A}_{\overline{k}},O_k}$ of $\widehat{\cal{A}}_{\overline{k}}$ is $\overline{k}[X, Y]^\wedge_{(X-1,Y-1)}$, the completion of $\overline{k}[X,Y]$ at $(X-1,Y-1)$, and
\[
    \tau:X\mapsto Y\mapsto X^{-1}Y^{-1}.
\]
Set $x=X-1,y=Y-1$. Then we have
\[
    \widehat{\cal{O}}_{\cal{A}_{\overline{k}},O_k}\cong \overline{k}[\![x,y]\!],
\]
\[
    \tau:x\mapsto y\mapsto \frac{1}{(x+1)(y+1)}-1.
\]
Let $z=\frac{1}{(x+1)(y+1)}-1$, so we have
\[
    \widehat{\cal{O}}_{\cal{A}_{\overline{k}},O_k}=\frac{\overline{k}[\![x,y,z]\!]}{((x+1)(y+1)(z+1)-1)},
\]
\[
    \tau:x\mapsto y\mapsto z.
\]
Let $u=x+y+z,v=xy+xz+yz, w=(x-y)(x-z)(y-z)$. Then the completed local ring of $\cal{A}_{\overline{k}}/\tau$ at $q_k(O_k)$ is
\[
    \widehat{\cal{O}}_{\cal{A}_{\overline{k}},O_k}^{\tau=\text{id}}=\frac{\overline{k}[\![u,v,w]\!]}{(w^2-u^4-u^3v-u^2v^2+v^3)},
\]
which is indeed an $E_6^1$ singularity.

Next, we show that there is a $G_k$-equivariant bijection
\[
    S_k\cong \cal{A}[1-\tau](\overline{k})\times D(\cal{A}[1-\tau]^\circ)(\overline{k})\times \underline{2}.
\]
The singularities on $\cal{A}_{\overline{k}}/\tau$ are indexed $G_k$-equivariantly by $\cal{A}[1-\tau](\overline{k})$. By translation, it suffices to show that the exceptional divisors of the $E_6^1$ singularity at $q_k(O_k)$ are indexed $G_k$-equivariantly by $D(\cal{A}[1-\tau]^\circ)(\overline{k})\times \underline{2}$.

Let $\Aut(E_6)$ be the automorphism group of the exceptional divisors of the $E_6^1$ singularity at $q_k(O_k)$. Explicitly, $\Aut(E_6)\cong \bb{Z}/2$ where the non-trivial element exchanges the two long arms of the $E_6$ graph. Denote the $G_k$-action on these exceptional divisors as
\[
    \chi_1:G_k\to \Aut(E_6).
\]

Since $D(\cal{A}[1-\tau]^\circ)(\overline{k})$ has order $3$, $\Aut(D(\cal{A}[1-\tau]^\circ)(\overline{k}))\cong \bb{Z}/2$. Denote by 
\[
    \chi_2:G_k\to \Aut(D(\cal{A}[1-\tau]^\circ)(\overline{k}))
\]
the action of $G_k$ on $D(\cal{A}[1-\tau]^\circ)(\overline{k})$. To prove that the exceptional divisors of the $E_6^1$ singularity at $q_k(O_k)$ are indexed $G_k$-equivariantly by $D(\cal{A}[1-\tau]^\circ)(\overline{k})\times \underline{2}$, it suffices to show that $\chi_1=\chi_2$.

Let
\[
    \chi:G_k\to \Aut(\widehat{\cal{A}}_{\overline{k}})\cong \Aut(X^*(\widehat{\cal{A}}_{\overline{k}}))\cong \GL_2(\bb{Z}_3)
\]
be the action of $G_k$ on $\widehat{\cal{A}}_{\overline{k}}$. Since the action of $G_k$ on $\widehat{\cal{A}}_{\overline{k}}$ commutes with $\tau$, the image of $\chi$ lies in the centralizer of $\tau$. After we view $X^*(\widehat{\cal{A}}_{\overline{k}})$ as a free $\bb{Z}_3[\omega]$-module of rank $1$, the centralizer of $\tau$ is $\Aut_{\bb{Z}_3[\omega]}(X^*(\widehat{\cal{A}}_{\overline{k}}))=\bb{Z}_3[\omega]^\times$. Thus, we have $\chi:G_k\to \bb{Z}_3[\omega]^\times$.

The actions of $G_k$ on the exceptional divisors of the $E_6^1$ singularity at $q_k(O_k)$ and on $D(\cal{A}[1-\tau]^\circ)(\overline{k})=\cok(X^*(\widehat{\cal{A}}_{\overline{k}})\xrightarrow{1-\tau}X^*(\widehat{\cal{A}}_{\overline{k}}))$ can be recovered from the action of $G_k$ on $\widehat{\cal{A}}_{\overline{k}}$, i.e. we have the following commutative diagram:
\begin{equation}\label{first c.d.}
    \xymatrix{ & \Aut(E_6) \\
    G_k \ar[r]^\chi \ar[ur]^{\chi_1} \ar[dr]_-{\chi_2} & \bb{Z}_3[\omega]^\times \ar[u]_{f_1} \ar[d]^{f_2} \\
    & \Aut(D(\cal{A}[1-\tau]^\circ)(\overline{k}))
    }
\end{equation}

Let $\mathfrak{m}\subset \bb{Z}_3[\omega]$ be the maximal ideal. Then $1+\mathfrak{m}\subset \bb{Z}_3[\omega]^\times$ is a pro-$3$ subgroup. Since $\Aut(E_6)$ and $\Aut(D(\cal{A}[1-\tau]^\circ)(\overline{k}))$ have order $2$, it follows that $1+\mathfrak{m}$ is in the kernel of both $f_1$ and $f_2$. Therefore, $f_1$ and $f_2$ factor through $\bb{Z}_3[\omega]^\times/(1+\mathfrak{m})\cong \bb{F}_3^\times$, i.e. the following diagram commutes
\begin{equation}\label{final c.d.}
    \xymatrix{ & & \Aut(E_6) \\
    G_k \ar[r]^\chi & \bb{Z}_3[\omega]^\times \ar[ur]^{f_1} \ar[dr]_-{f_2} \ar[r] & \bb{F}_3^\times \ar[u]_{g_1} \ar[d]^{g_2} \\
    & & \Aut(D(\cal{A}[1-\tau]^\circ)(\overline{k}))
    }
\end{equation}
To prove that $\chi_1=\chi_2$, it remains to show $g_1=g_2$. Since $-1\in \bb{Z}_3[\omega]^\times$ maps to the non-trivial element in $\bb{F}_3^\times$, it suffices to prove $f_1(-1)=f_2(-1)$.

This can be checked explicitly since $-1\in \bb{Z}_3[\omega]^\times$ corresponds to the inversion automorphism $[-1]$ of $\cal{A}_{\overline{k}}$. The inversion automorphism $[-1]$ is non-trivial on $D(\cal{A}[1-\tau]^\circ)(\overline{k})$. Therefore, we need to check that $f_1(-1)$ is non-trivial, i.e. $[-1]$ exchanges the two long arms of the $E_6$ graph.

On the coordinate ring $\widehat{\cal{O}}_{\cal{A}_{\overline{k}},O_k}\cong \overline{k}[X, Y]^\wedge_{(X-1,Y-1)}$ of $\widehat{\cal{A}}_{\overline{k}}$, we have
\[
    [-1]:X\mapsto X^{-1}, Y\mapsto Y^{-1}.
\]
The formal neighbourhood of $\cal{A}_{\overline{k}}/\tau$ at $q_k(O_k)$ is $\Spec \widehat{\cal{O}}_{\cal{A}_{\overline{k}},O_k}^{\tau=\text{id}}$. We may blow it up at $(u,v,w)$, and focus on the affine chart $D(u)$ of the blowup. Let $v'=v/u, w'=w/u$. Then
\[
    D(u)=V(u^2 + u^2v' + u^2v'^2 - uv'^3 - w'^2),
\]
which is an $A_5$ singularity. The exceptional divisor is given by $V(u, w'^2)$, which corresponds to the vertex at the end of the shortest arm of the $E_6$ graph.

We may further blow up this affine chart $D(u)$ at $(u,v',w')$, and focus on the affine chart $D(v')$ of the blowup. Let $u'=u/v', w''=w'/v'$. Then
\[
    D(v')=V(u'^2+u'^2v'+u'^2v'^2-u'v'^2-w''^2),
\]
which is an $A_3$ singularity. The exceptional divisors are given by
\[
    V(v', u'^2-w''^2)=V(v',u'-w'')\cup V(v',u'+w''),
\]
which correspond to the vertices at the end of the two long arms of $E_6$. By explicit computation, we check that $[-1]:X\mapsto X^{-1}, Y\mapsto Y^{-1}$ exchanges these two exceptional divisors, and therefore exchanges the two long arms of the $E_6$ graph. This finishes the computation of the $G_k$-action on $S_k$.

\subsection{The order $4$, $p=2$ case}

We now prove Proposition \ref{computation summary}(3).

\subsubsection{Singularities and Galois action on the generic fibre}\label{Singularities and Galois action on the generic fibre order 4}

The singularities on $A_{\overline{K}}/\tau$ are given by the images of $A[1-\tau^2](\overline{K})=A[2](\overline{K})$ under $q$. Consider the following factorization of $q$:
\[
    q:A\to A/\tau^2\overset{q'}{\to} A/\tau.
\]
Since $\tau^2=-1$, there are sixteen $A_1$ singularities on $A_{\overline{K}}/\tau^2$ by Section \ref{Singularities and Galois action on the generic fibre order 2}. Four of them in $A[1-\tau](\overline{K})$ are fixed by $\tau$, which give four singularities on $A_{\overline{K}}/\tau$.

The remaining twelve singularities on $A_{\overline{K}}/\tau^2$ split into six $\tau$-orbits, which give six singularities on $A_{\overline{K}}/\tau$. Note that $q'$ is finite \'etale away from the fixed points of $\tau$. Thus, these six singularities have type $A_1$. Both these singularities and their exceptional divisors are indexed $G_K$-equivariantly by
\[
    (A[2](\overline{K})\setminus A[1-\tau](\overline{K}))/\tau.
\]

The singularities arising from $A[1-\tau](\overline{K})$ have the same type by translation. Therefore, we focus on the singularity at $q(O)$. Let $\widehat{\cal{O}}_{A,O}$ be the completed local ring of $A$ at $O$. Since $\tau^2+1=0$ and $\tau$ is symplectic, we may choose $x,y\in \widehat{\cal{O}}_{A,O}$ such that $\widehat{\cal{O}}_{A,O}\cong K[\![x,y]\!]$ and
\[
    \tau:x\mapsto y\mapsto -x\mapsto -y.
\]
Let $u=x^2+y^2,v=x^2y^2,w=x^3y-xy^3$. Then the completed local ring of $A/\tau$ at $q(O)$ is
\[
    \widehat{\cal{O}}_{A,O}^{\tau=\text{id}}=\frac{K[\![u,v,w]\!]}{(u^2v-4v^2-w^2)},
\]
which is an $A_3$ singularity.

The four $A_3$ singularities are indexed $G_K$-equivariantly by $A[1-\tau](\overline{K})$. By translation, it remains to compute how $G_K$ acts on the exceptional divisors of the $A_3$ singularity at $q(O)$. We blow $\widehat{\cal{O}}_{A,O}^{\tau=\text{id}}$ up at $(u,v,w)$, and focus on the affine chart $D(u)$ of the blowup. Let $v'=v/u, w'=w/u$. Then
\[
    D(u)=V(uv'-4v'^2-w'^2)
\]
with exceptional divisors
\[
    V(u,4v'^2+w'^2)=V(u,2v'+iw')\cup V(u,2v'-iw'),
\]
corresponding to the two end vertices of the $A_3$ graph. Therefore, the exceptional divisors of the $A_3$ singularity are indexed $G_K$-equivariantly by $\{1,\pm i\}$. This contributes the factor $A[1-\tau](\overline{K})\times \{1,\pm i\}$ to $S_K$.

\subsubsection{Singularities and Galois action on the special fibre}\label{Singularities and Galois action on the special fibre order 4}

The singularities of $\cal{A}_{\overline{k}}/\tau$ are given by the images of $\cal{A}[1-\tau^2](\overline{k})=\cal{A}[2](\overline{k})$ under $q_k$. Similar to the reasoning in Section \ref{Singularities and Galois action on the generic fibre order 4}, by translation, the two points in $\cal{A}[1-\tau](\overline{k})$ give two singularities of the same type on $\cal{A}_{\overline{k}}/\tau$, which we determine later. The remaining two points in $\cal{A}[2](\overline{k})\setminus \cal{A}[1-\tau](\overline{k})$ form a single $\tau$-orbit and give one singularity on $\cal{A}_{\overline{k}}/\tau$, which has type $D_4^1$ by Section \ref{Singularities and Galois action on the special fibre order 2}.

We first compute how $G_k$ acts on the exceptional divisors of this $D_4^1$ singularity. Let $Y\to \cal{A}_{\overline{k}}/\tau^2$ and $X\to \cal{A}_{\overline{k}}/\tau$ be minimal desingularizations. Note that $\tau$ extends to $Y$ and $X\to \cal{A}_{\overline{k}}/\tau$ factors through $Y/\tau$. By Section \ref{Singularities and Galois action on the special fibre order 2}, the exceptional divisors of each $D_4^1$ singularity on  $\cal{A}_{\overline{k}}/\tau^2$ are indexed by $\cal{A}[2]^\circ(\overline{K})$. In particular, the exceptional divisors of the two $D_4^1$ singularities arising from $\cal{A}[2](\overline{k})\setminus \cal{A}[1-\tau](\overline{k})$ are indexed by
\[
    (\cal{A}[2](\overline{k})\setminus \cal{A}[1-\tau](\overline{k}))\times \cal{A}[2]^\circ(\overline{K}).
\]
Their image in $Y/\tau$ forms a $D_4$ configuration, which corresponds to the exceptional divisors of $X\to \cal{A}_{\overline{k}}/\tau$ above the $D_4^1$ singularity. This contributes the factor $((\cal{A}[2](\overline{k})\setminus \cal{A}[1-\tau](\overline{k}))\times \cal{A}[2]^\circ(\overline{K}))/\tau$ to $S_k$.

It remains to compute the singularity at $q_k(O_k)$ and how $G_k$ permutes its exceptional divisors. This is similar to the formal local computation in Section \ref{Singularities and Galois action on the special fibre order 3}. Let $\widehat{\cal{A}}_{\overline{k}}\cong \widehat{\bb{G}}^{\oplus2}_m$ be the formal group of $\cal{A}_{\overline{k}}$. Let $X^*(\widehat{\cal{A}}_{\overline{k}})=\Hom(\widehat{\cal{A}}_{\overline{k}},\widehat{\bb{G}}_m)$ be the character group of $\widehat{\cal{A}}_{\overline{k}}$, viewed as a free $\bb{Z}_2[i]$-module of rank $1$, where $i$ acts as $\tau$. We may therefore identify $X^*(\widehat{\cal{A}}_{\overline{k}})$ with $\bb{Z}_2[i]$ and choose $\{1,i\}$ as a basis of $X^*(\widehat{\cal{A}}_{\overline{k}})$ over $\bb{Z}_2$. Then $\tau$ corresponds to $\begin{psmallmatrix} 0 & -1 \\ 1 & 0 \end{psmallmatrix}\in \GL_2(\bb{Z}_2)$. Explicitly, the coordinate ring $\widehat{\cal{O}}_{\cal{A}_{\overline{k}},O_k}$ of $\widehat{\cal{A}}_{\overline{k}}$ is $\overline{k}[X, Y]^\wedge_{(X-1,Y-1)}$, the completion of $\overline{k}[X,Y]$ at $(X-1,Y-1)$, and
\[
    \tau:X\mapsto Y\mapsto X^{-1}\mapsto Y^{-1}.
\]
Set $x=X-1,y=Y-1$. Then we have
\[
    \widehat{\cal{O}}_{\cal{A}_{\overline{k}},O_k}\cong \overline{k}[\![x,y]\!],
\]
\[
    \tau:x\mapsto y\mapsto \frac{x}{x+1}\mapsto\frac{y}{y+1}.
\]
Let $z=\frac{x}{x+1}, w=\frac{y}{y+1}$, so we have
\[
    \widehat{\cal{O}}_{\cal{A}_{\overline{k}},O_k}=\frac{\overline{k}[\![x,y,z,w]\!]}{(xz+x+z,yw+y+w)},
\]
\[
    \tau:x\mapsto y\mapsto z\mapsto w.
\]
Let $u=x+y+z+w,v=xy+xw+yz+zw, r=xy^2+yz^2+zw^2+x^2w$. Then the completed local ring of $\cal{A}_{\overline{k}}/\tau$ at $q_k(O_k)$ is
\[
    \widehat{\cal{O}}_{\cal{A}_{\overline{k}},O_k}^{\tau=\text{id}}=\frac{\overline{k}[\![u,v,r]\!]}{(u^3v+uvr+v^3+r^2)},
\]
which is an $E_7^3$ singularity.

Since the automorphism group of an $E_7$ graph is trivial and the two $E_7^3$ singularities on $\cal{A}_{\overline{k}}/\tau$ are indexed $G_k$-equivariantly by $\cal{A}[1-\tau](\overline{k})\cong \bb{Z}/2$, it follows that $G_k$ acts trivially on their exceptional divisors, which contributes the factor $\underline{14}$ to $S_k$.

\subsection{The order $6$, $p=2$ case}

We now prove Proposition \ref{computation summary}(4).

\subsubsection{Singularities and Galois action on the generic fibre}\label{Singularities and Galois action on the generic fibre order 6, p=2}

The singularities of $A_{\overline{K}}/\tau$ are given by the images of $A[1-\tau^2](\overline{K})$ and $A[1-\tau^3](\overline{K})=A[2](\overline{K})$ under $q$. The origin $O$ is the only fixed point of $\tau$, giving one singularity on $A_{\overline{K}}/\tau$ whose type is determined later. The points in $A[1-\tau^2](\overline{K})\setminus 0$ split into four $\tau$-orbits and give four singularities on $A_{\overline{K}}/\tau$. By Section \ref{Singularities and Galois action on the generic fibre order 3}, they all have type $A_2$ and the exceptional divisors of each singularity are indexed $G_K$-equivariantly by $\{\omega,\omega^2\}$. This contributes the factor $(A[1-\tau^2](\overline{K})\setminus 0)/\tau\times \{\omega,\omega^2\}$ to $S_K$.

Next, the points in $A[2](\overline{K})\setminus 0$ split into five $\tau$-orbits, which give five singularities on $A_{\overline{K}}/\tau$. They all have type $A_1$ by Section \ref{Singularities and Galois action on the generic fibre order 2}. This contributes the factor $(A[2](\overline{K})\setminus 0)/\tau$ to $S_K$.

It remains to compute the type of singularities at $q(O)$ and how $G_K$ acts on its exceptional divisors. Let $\widehat{\cal{O}}_{A,O}$ be the completed local ring of $A$ at the origin. Since $\tau^2-\tau+1=0$ and $\tau$ is symplectic, we may choose $x,y\in \widehat{\cal{O}}_{A,O}$ such that $\widehat{\cal{O}}_{A,O}\cong K[\![x,y]\!]$ and
\[
    \tau:x\mapsto y\mapsto -x+y\mapsto -x\mapsto -y\mapsto x-y.
\]
Let $z=-x+y, w=-x-z$. Then 
\[
    \widehat{\cal{O}}_{A,O}=\frac{K[\![x,z,w]\!]}{(x+z+w)},
\]
\[
    \tau:x\mapsto -w\mapsto z\mapsto -x\mapsto w\mapsto -z.
\]
Let $u=xz+xw+zw, v=3xzw/2,r=(x-z)(z-w)(x-w)/2$. Then
\[
    \widehat{\cal{O}}_{A,O}^{\tau^2=\text{id}}=\frac{K[\![u,v,r]\!]}{(u^3+3v^2+r^2)},
\]
\[
    \tau:u\mapsto u, v\mapsto -v, r\mapsto -r.
\]
Let $a=v^2,b=vr$. Then the completed local ring of $A/\tau$ at $q(O)$ is
\[
    \widehat{\cal{O}}_{A,O}^{\tau=\text{id}}=\frac{K[\![u,a,b]\!]}{(u^3a + 3a^2 + b^2)},
\]
which is an $A_5$ singularity.

To compute how $G_K$ acts on its exceptional divisors, we blow $\widehat{\cal{O}}_{A,O}^{\tau=\text{id}}$ up at $(u,a,b)$, and focus on the affine chart $D(u)$ of the blowup. Let $a'=a/u, b'=b/u$. Then
\[
    D(u)=V(u^2a'+3a'^2+b'^2)
\]
with exceptional divisors
\[
    V(u,3a'^2+b'^2)=V(u,\sqrt{-3}a'+b')\cup V(u,-\sqrt{-3}a'+b'),
\]
corresponding to the two ends of the $A_5$ graph. This shows that the exceptional divisors of the $A_5$ singularity are indexed $G_K$-equivariantly by $\{1,\pm \omega,\pm\omega^2\}$.

\subsubsection{Singularities and Galois action on the special fibre}\label{Singularities and Galois action on the special fibre order 6, p=2}

The singularities of $\cal{A}_{\overline{k}}/\tau$ are given by the images of $\cal{A}[1-\tau^2](\overline{k})$ and $\cal{A}[1-\tau^3](\overline{k})=\cal{A}[2](\overline{k})$ on $\cal{A}_{\overline{k}}$ under $q_k$. The origin $O_k$ is the only fixed point of $\tau$, giving one singularity on $\cal{A}_{\overline{k}}/\tau$ whose type is determined later. Since $p=2$ and $\tau^2$ has order $3$, the points in $\cal{A}[1-\tau^2](\overline{k})\setminus 0$ give four $A_2$ singularities on $\cal{A}_{\overline{k}}/\tau$, and each singularity has exceptional divisors indexed $G_k$-equivariantly by $\{\omega,\omega^2\}$ as in Section \ref{Singularities and Galois action on the generic fibre order 6, p=2}. This contributes the factor
\[
    (\cal{A}[1-\tau^2](\overline{k})\setminus 0)\times \{\omega,\omega^2\}\cong (A[1-\tau^2](\overline{K})\setminus 0)\times \{\omega,\omega^2\}
\]
to $S_k$.

Next, the points in $\cal{A}[2](\overline{k})\setminus 0$ form a single $\tau$-orbit, which gives one $D_4^1$ singularity on $\cal{A}_{\overline{k}}/\tau$ by Section \ref{Singularities and Galois action on the special fibre order 2}. To compute how $G_k$ acts on its exceptional divisors, we use the same reasoning as in Section \ref{Singularities and Galois action on the special fibre order 4}: the exceptional divisors of each $D_4^1$ singularity on $\cal{A}_{\overline{k}}/\tau^3$ are indexed by $\cal{A}[2]^\circ(\overline{K})$ by Section \ref{Singularities and Galois action on the special fibre order 2}. In particular, the exceptional divisors of the singularities arising from $\cal{A}[2](\overline{k})\setminus 0$ are indexed by
\[
    (\cal{A}[2](\overline{k})\setminus 0)\times \cal{A}[2]^\circ(\overline{K}).
\]
Their quotient by $\tau$ forms a $D_4$ configuration, which corresponds to the exceptional divisors of the $D_4^1$ singularity on $\cal{A}_{\overline{k}}/\tau$. This contributes the factor $((\cal{A}[2](\overline{k})\setminus 0)\times \cal{A}[2]^\circ(\overline{K}))/\tau$ to $S_k$.

It remains to compute the singularity at $q_k(O_k)$ and how $G_k$ acts on its exceptional divisors. Let $\widehat{\cal{A}}_{\overline{k}}\cong \widehat{\bb{G}}^{\oplus2}_m$ be the formal group of $\cal{A}_{\overline{k}}$. Let $X^*(\widehat{\cal{A}}_{\overline{k}})=\Hom(\widehat{\cal{A}}_{\overline{k}},\widehat{\bb{G}}_m)$ be the character group of $\widehat{\cal{A}}_{\overline{k}}$, viewed as a free $\bb{Z}_2[\omega]$-module of rank $1$ where $-\omega$ acts as $\tau$. We may therefore identify $X^*(\widehat{\cal{A}}_{\overline{k}})$ with $\bb{Z}_2[\omega]$ and choose $\{1,-\omega\}$ as a basis of $X^*(\widehat{\cal{A}}_{\overline{k}})$ over $\bb{Z}_2$. Then $\tau$ corresponds to $\begin{psmallmatrix} 0 & -1 \\ 1 & 1 \end{psmallmatrix}\in \GL_2(\bb{Z}_2)$. Explicitly, the coordinate ring $\widehat{\cal{O}}_{\cal{A}_{\overline{k}},O_k}$ of $\widehat{\cal{A}}_{\overline{k}}$ is $\overline{k}[X, Y]^\wedge_{(X-1,Y-1)}$, the completion of $\overline{k}[X,Y]$ at $(X-1,Y-1)$, and
\[
    \tau:X\mapsto Y\mapsto X^{-1}Y\mapsto X^{-1}\mapsto Y^{-1}\mapsto XY^{-1}.
\]
Set $x=X-1,y=Y-1$. Then we have
\[
    \widehat{\cal{O}}_{\cal{A}_{\overline{k}},O_k}\cong \overline{k}[\![x,y]\!],
\]
\[
    \tau:x\mapsto y\mapsto \frac{x+y}{x+1}\mapsto\frac{x}{x+1} \mapsto\frac{y}{y+1}\mapsto\frac{x+y}{y+1}.
\]
Let $r=\frac{x^2}{x+1}, s=\frac{y^2}{y+1}, t=\frac{x^2+y^2}{(x+1)(y+1)}$, so we have
\[
    \widehat{\cal{O}}_{\cal{A}_{\overline{k}},O_k}^{\tau^3=\text{id}}=\frac{\overline{k}[\![r,s,t]\!]}{((r+s+t)^2+rst)},
\]
\[
    \tau:r\mapsto s\mapsto t.
\]
Let $u=rs+rt+st, v=rs^2+st^2+r^2t, w=r+s+t$. Then the completed local ring of $\cal{A}_{\overline{k}}/\tau$ at $q_k(O_k)$ is
\[
    \widehat{\cal{O}}_{\cal{A}_{\overline{k}},O_k}^{\tau=\text{id}}=\frac{\overline{k}[\![u,v,w]\!]}{(u^3+uvw+v^2+vw^2+w^5+w^4)},
\]
which is an $E_6^1$ singularity.

It remains to compute the $G_k$-action on the exceptional divisors of this $E_6^1$ singularity. Let $X\to \cal{A}_{\overline{k}}/\tau$ and $Y\to \cal{A}_{\overline{k}}/\tau^2$ be minimal desingularizations. Then $\tau$ extends to $Y$ and $X\to \cal{A}_{\overline{k}}/\tau$ factors through $Y/\tau$.

We claim that $\tau$ fixes the two exceptional divisors on $Y$ above the $A_2$ singularity arising from $O_k$, and their image in $Y/\tau$ consists of two rational curves whose intersection is a $D_4^1$ singularity: this can be verified by a formal local calculation similar to that above. After resolving the $D_4^1$ singularity, we obtain four more exceptional divisors. Together with the two rational curves, they form an $E_6$ configuration on $X$, which are exactly the exceptional divisors of the $E_6^1$ singularity $q_k(O_k)$. In particular, the two exceptional divisors on $Y$ above the $A_2$ singularity arising from $O_k$ correspond to the vertices at the ends of the two long arms of the $E_6$ graph. By Section \ref{Singularities and Galois action on the generic fibre order 3}, they are indexed by $\{\omega,\omega^2\}$. This proves that the exceptional divisors of the $E_6^1$ singularity are indexed by $\{1,\omega,\omega^2\}\times \underline{2}$.

\subsection{The order $6$, $p=3$ case}

We now prove Proposition \ref{computation summary}(5). The singularities on $A_{\overline{K}}/\tau$ and the $G_K$-action on $S_K$ are the same as in Section \ref{Singularities and Galois action on the generic fibre order 6, p=2}.

\subsubsection{Singularities and Galois action on the special fibre}\label{Singularities and Galois action on the special fibre order 6, p=3}

The singularities of $\cal{A}_{\overline{k}}/\tau$ are given by the images of $\cal{A}[1-\tau^2](\overline{k})$ and $\cal{A}[1-\tau^3](\overline{k})=\cal{A}[2](\overline{k})$ on $\cal{A}_{\overline{k}}$ under $q_k$. The origin $O_k$ is the only fixed point of $\tau$, giving one singularity on $\cal{A}_{\overline{k}}/\tau$ whose type is determined later. The points in $\cal{A}[2](\overline{k})\setminus 0$ split into five $\tau$-orbits, which give five $A_1$ singularities on $\cal{A}_{\overline{k}}/\tau$ by Section \ref{Singularities and Galois action on the generic fibre order 2}. These $A_1$ singularities, as well as their exceptional divisors, are indexed by
\[
    (\cal{A}[2](\overline{k})\setminus 0)/\tau\cong (A[2](\overline{K})\setminus 0)/\tau.
\]

Also, the points in $\cal{A}[1-\tau^2](\overline{k})\setminus 0$ form a single $\tau$-orbit, which gives one $E_6^1$ singularity on $\cal{A}_{\overline{k}}/\tau$ by Section \ref{Singularities and Galois action on the special fibre order 3}. To compute how $G_k$ acts on its exceptional divisors, recall that the exceptional divisors of each $E_6^1$ singularity on $\cal{A}_{\overline{k}}/\tau^2$ are indexed by $D(\cal{A}[1-\tau^2]^\circ)(\overline{k})\times \underline{2}$. In particular, the exceptional divisors of the singularities arising from $\cal{A}[1-\tau^2](\overline{k})\setminus 0$ are indexed by
\[
    (\cal{A}[1-\tau^2](\overline{k})\setminus 0)\times D(\cal{A}[1-\tau^2]^\circ)(\overline{k})\times \underline{2}.
\]
Their quotient by $\tau$ forms an $E_6$ configuration, which corresponds to the exceptional divisors of the $E_6^1$ singularity on $\cal{A}_{\overline{k}}/\tau$. This contributes the factor $((\cal{A}[1-\tau^2](\overline{k})\setminus 0)\times D(\cal{A}[1-\tau^2]^\circ)(\overline{k}))/\tau\times \underline{2}$ to $S_k$.

It remains to compute the singularity at $q_k(O_k)$ and how $G_k$ acts on its exceptional divisors. Let $\widehat{\cal{A}}_{\overline{k}}\cong \widehat{\bb{G}}^{\oplus2}_m$ be the formal group of $\cal{A}_{\overline{k}}$. Let $X^*(\widehat{\cal{A}}_{\overline{k}})=\Hom(\widehat{\cal{A}}_{\overline{k}},\widehat{\bb{G}}_m)$ be the character group of $\widehat{\cal{A}}_{\overline{k}}$, viewed as a free $\bb{Z}_3[\omega]$-module of rank $1$ where $-\omega$ acts as $\tau$. We may therefore identify $X^*(\widehat{\cal{A}}_{\overline{k}})$ with $\bb{Z}_3[\omega]$ and choose $\{1,\omega^2\}$ as a basis of $X^*(\widehat{\cal{A}}_{\overline{k}})$ over $\bb{Z}_3$. Then $\tau$ corresponds to $\begin{psmallmatrix} 1 & -1 \\ 1 & 0 \end{psmallmatrix}\in \GL_2(\bb{Z}_3)$. Explicitly, the coordinate ring $\widehat{\cal{O}}_{\cal{A}_{\overline{k}},O_k}$ of $\widehat{\cal{A}}_{\overline{k}}$ is $\overline{k}[X, Y]^\wedge_{(X-1,Y-1)}$, the completion of $\overline{k}[X,Y]$ at $(X-1,Y-1)$, and
\[
    \tau:X\mapsto XY\mapsto Y\mapsto X^{-1}\mapsto X^{-1}Y^{-1}\mapsto Y^{-1}.
\]
Set $x=X-1,y=Y-1$. Then we have
\[
    \widehat{\cal{O}}_{\cal{A}_{\overline{k}},O_k}\cong \overline{k}[\![x,y]\!],
\]
\[
    \tau:x\mapsto xy+x+y\mapsto y\mapsto \frac{-x}{x+1}\mapsto \frac{-xy-x-y}{(x+1)(y+1)}\mapsto \frac{-y}{y+1}.
\]
Let $z=\frac{-xy-x-y}{(x+1)(y+1)}$, so we have
\[
    \widehat{\cal{O}}_{\cal{A}_{\overline{k}},O_k}=\frac{\overline{k}[\![x,y,z]\!]}{((x+1)(y+1)(z+1)-1)},
\]
\[
    \tau:x\mapsto \frac{-z}{z+1}\mapsto  y \mapsto \frac{-x}{x+1} \mapsto z\mapsto \frac{-y}{y+1}.
\]
Let $u=x+y+z+xy+xz+yz,v=xy+xz+yz,w=(x-y)(x-z)(y-z)$. Then we have
\[
    \widehat{\cal{O}}_{\cal{A}_{\overline{k}},O_k}^{\tau^2=\text{id}}=\frac{\overline{k}[\![u,v,w]\!]}{(u^4+u^2v^2+v^4-v^3-w^2)},
\]
\[
    \tau:u\mapsto -u, v\mapsto v,w\mapsto -w.
\]
Let $a=u^2,b=uw$. Then the completed local ring of $\cal{A}_{\overline{k}}/\tau$ at $q_k(O_k)$ is
\[
    \widehat{\cal{O}}_{\cal{A}_{\overline{k}},O_k}^{\tau=\text{id}}=\frac{\overline{k}[\![a,b,v]\!]}{(av^4-av^3+a^2v^2+a^3-b^2)},
\]
which is an $E_7^1$ singularity. In particular, $G_k$ acts trivially on its exceptional divisors, which contributes the factor $\underline{7}$ to $S_k$.

\section{Good reduction criterion}

In this section, we prove Theorem \ref{main theorem}. Let $A$ be an abelian surface over $K$ with ordinary good reduction. Let $\tau$ be an automorphism of $A$ of order $3$, $4$, or $6$, such that the minimal desingularization $X=\Kum(A,\tau)$ of $A/\tau$ is a K3 surface. Suppose we are in the wild case.

\subsection{The general situation}

We first show that $\Kum(A,\tau)$ has potential good reduction. We begin by computing the generic and special fibres of $\cal{A}/\tau$. Since taking quotient by a finite group commutes with flat base change, $A/\tau\cong (\cal{A}/\tau)_K$. In fact, we also have $\cal{A}_k/\tau\cong (\cal{A}/\tau)_k$.

\begin{lem}\cite[c.f.][Proposition 5.6]{non-supersingular-kummer}\label{commutativity isomorphism}
    The natural map $\cal{A}_k\to (\cal{A}/\tau)_k$ induces an isomorphism
    \begin{equation*}
        \cal{A}_k/\tau\cong (\cal{A}/\tau)_k.
    \end{equation*}
\end{lem}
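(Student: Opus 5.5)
We need to show that formation of invariants commutes with base change to $k$ here. Work affine-locally: $\cal{A}$ is projective over $\cal{O}_K$, so each $\tau$-orbit lies in a $\tau$-stable affine open. Hence we may take $\cal{A}=\Spec B$ with $B$ flat over $\cal{O}_K$ and $\tau$ acting on $B$. Let $\pi$ be a uniformizer and $\Gamma=\langle\tau\rangle$. The natural map is
\[
    B^{\Gamma}/\pi B^{\Gamma}\to (B/\pi B)^{\Gamma},
\]
and we must show it is an isomorphism. The statement is local, so it holds globally once it holds on this cover.

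\textbf{Injectivity.} Suppose $b\in B^\Gamma$ and $b=\pi c$ with $c\in B$. Since $B$ is $\pi$-torsion free, $\pi(\tau c-c)=0$ forces $\tau c=c$. So $c\in B^\Gamma$, which gives injectivity.

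\textbf{Surjectivity.} Apply $\Gamma$-invariants to $0\to B\xrightarrow{\pi}B\to B/\pi B\to 0$. This shows the cokernel of the map embeds into $H^1(\Gamma,B)[\pi]$. It therefore suffices to show that $H^1(\Gamma,B)$, or at least its $\pi$-torsion, vanishes. For $\Gamma$ cyclic, $H^1(\Gamma,B)=\ker(N)/\operatorname{im}(1-\tau)$, where $N=\sum\tau^j$.

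\textbf{Vanishing of $H^1$ (the main obstacle).} In the tame part this is automatic, since $|\Gamma|$ is invertible. In the wild case we need geometric input. I would follow Lazda–Skorobogatov's argument for Proposition 5.6.

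First, away from the points of $\cal{A}_k$ with nontrivial stabilizer, $\Gamma$ acts freely on $\cal{A}$. Étale descent then gives that $\cal{A}\to\cal{A}/\Gamma$ is étale there and the quotient commutes with base change. So the cokernel sheaf is supported on the finitely many closed points with nontrivial stabilizer, computed in Section~\ref{computation section}.

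Second, both $(\cal{A}/\tau)_k$ and $\cal{A}_k/\tau$ are finite over the same base and agree generically, and $\cal{A}_k/\tau$ is normal. If I can show $(\cal{A}/\tau)_k$ is reduced and $S_2$, hence normal, then the finite birational map $\cal{A}_k/\tau\to(\cal{A}/\tau)_k$ (which arises because $\cal{A}_k\to(\cal{A}/\tau)_k$ is $\tau$-invariant) must be an isomorphism.

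To get normality of the special fibre, argue as follows. $\cal{A}/\tau$ is a normal 3-dimensional scheme, and a normal domain's invariant ring is normal, so it is $S_2$. Then $(\cal{A}/\tau)_k$ is a Cartier divisor in an $S_2$ scheme whose $S_2$-ness at the isolated bad points must be checked; a Cartier divisor only inherits $S_1$ in general. This is where I expect difficulty. I would try to verify it via the depth of the 3-dimensional invariant ring at those points: if $\cal{A}/\tau$ is Cohen–Macaulay there, then the special fibre is CM, and being $R_1$ off finitely many points, it is normal. Whether CM holds in the wild case is not clear to me, so this is the least certain step.

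As a fallback, I would compare the explicit completed local invariant rings from Section~\ref{computation section}. Lifting those invariant generators $u,v,w$ to $\widehat{\cal{O}}_{\cal{A},O}$ and checking that they generate the invariants should show directly that reduction mod $\pi$ is surjective onto the invariants.
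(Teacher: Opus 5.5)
You have a genuine gap: surjectivity at the wild fixed points. Your injectivity argument is correct (it is the paper's $\Tor$ argument in other words), and so is the reduction, via the free locus, to the finitely many points with non-trivial stabilizer. But both routes you propose for the wild points only restate the lemma. Since $H^1(\Gamma,B)$ is killed by $\lvert\Gamma\rvert$, it is $\pi$-power torsion, so $H^1(\Gamma,B)[\pi]=0$ already forces $H^1(\Gamma,B)=0$. Your own long exact sequence shows this is \emph{equivalent} to the lemma. Likewise, $\pi$ is a non-zero-divisor on $\cal{A}/\tau$ and $\cal{A}_k/\tau$ is a normal surface, hence Cohen--Macaulay. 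So the lemma is equivalent to $\cal{A}/\tau$ being Cohen--Macaulay at those points. For wild actions in dimension $3$ that is not automatic, so your CM step carries the whole content of the statement, and nothing in your plan supplies it. Some concrete description of the $\tau$-action near the fixed points has to enter.

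Your fallback is where the paper's proof actually lives, but as phrased it checks the wrong thing. Arbitrary lifts of $u,v,w$ need not be $\tau$-invariant, and you do not need lifts that generate $\widehat{\cal{O}}_{\cal{A},O_k}^{\tau=\text{id}}$. What the paper uses is that $u,v,w$ are cyclically symmetric polynomials in $x,\tau(x),\tau^2(x)$ for a single element $x$. The same polynomials in $\tilde x,\tau(\tilde x),\tau^2(\tilde x)$, for any lift $\tilde x$, are therefore $\tau$-invariant lifts. Since they topologically generate the target, the local map $\widehat{\cal{O}}_{\cal{A},O_k}^{\tau=\text{id}}\otimes k\to\widehat{\cal{O}}_{\cal{A}_k,O_k}^{\tau=\text{id}}$ of complete local rings is onto. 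This settles order $3$, and via $u,v,r$ the points of $\cal{A}[1-\tau](k)$ in order $4$.

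For the points with stabilizer $\langle -1\rangle$ when $p=2$, and for order $6$, the paper instead factors the quotient through $\cal{A}/\tau^2$ or $\cal{A}/\tau^3$. The first quotient is by $[-1]$ with $p=2$ (\cite[Proposition 5.6]{non-supersingular-kummer}) or by an element of order $3$ (the order-$3$ case). The residual action is then tame, or free away from the images of $\cal{A}[1-\tau](k)$ in the order-$4$ case.

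Alternatively, your $H^1$ formulation can be closed uniformly. Make a faithfully flat base change to a complete DVR $\cal{O}'$ with algebraically closed residue field over which the fixed points lift to sections fixed by their stabilizer, so you can translate to $O_k$. Ordinarity then makes the formal group $\widehat{\bb{G}}_m^{2}$, on whose characters $\tau$ acts by an integral matrix. So $\widehat{\cal{O}}_{\cal{A},O_k}$ is the completion, at a $\tau$-fixed maximal ideal, of the permutation module $\cal{O}'[\bb{Z}^2]$. Its $H^1$ is $\bigoplus_\chi \Hom(\Gamma_\chi,\cal{O}')=0$ by Shapiro's lemma, and this survives flat base change to the completed invariant ring.
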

\begin{proof}
    Since taking quotient commutes with flat base change, we may assume that $k$ is algebraically closed. We prove this lemma case by case.

    \begin{enumerate}
        \item Suppose $\tau$ has order $3$ and $p=3$. Let $\pi$ be the projection $\cal{A}\to \cal{A}/\tau$ and let $V=\cal{A}[1-\tau](k)$. The action of $\tau$ on $\cal{A}_k$ is free away from $V$. Then we have
        \[
            (\cal{A}_k\setminus V)/\tau\cong (\cal{A}/\tau)_k\setminus \pi(V).
        \]
        It remains to prove $\cal{A}_k/\tau\cong (\cal{A}/\tau)_k$ around points in $V$. Again, by flatness, we may replace $\cal{A}$ by the spectra of the completed local rings at points in $V$, which are isomorphic by translation. Therefore, we focus on $O_k$, the origin of $\cal{A}_k$. We want to prove the natural map
        \[
            \phi:\widehat{\cal{O}}_{\cal{A},O_k}^{\tau=\text{id}}\otimes_{\cal{O}_K}k\to \widehat{\cal{O}}_{\cal{A}_k,O_k}^{\tau=\text{id}}
        \]
        is an isomorphism.

        For the injectivity of $\phi$, we prove that $\widehat{\cal{O}}_{\cal{A},O_k}^{\tau=\text{id}}\to \widehat{\cal{O}}_{\cal{A},O_k}$ remains injective after base change to $k$. It suffices to show that
        \[
            \Tor_1^{\cal{O}_K}(\widehat{\cal{O}}_{\cal{A},O_k}/\widehat{\cal{O}}_{\cal{A},O_k}^{\tau=\text{id}},k)=0,
        \]
        which follows since $\widehat{\cal{O}}_{\cal{A},O_k}/\widehat{\cal{O}}_{\cal{A},O_k}^{\tau=\text{id}}$ is $m_K$-torsion free. For the surjectivity of $\phi$, by the computation in Section \ref{Singularities and Galois action on the special fibre order 3}, there exists $X\in \widehat{\cal{O}}_{\cal{A}_k,O_k}\cong k[\![U,V]\!]$ such that $\widehat{\cal{O}}_{\cal{A}_k,O_k}^{\tau=\text{id}}$ is generated by 
        \begin{align*}
            u & = X+\tau(X)+\tau^2(X),\\
            v & = X\tau(X)+X\tau^2(X)+\tau(X)\tau^2(X),\\
            w & = (X-\tau(X))(X-\tau^2(X))(\tau(X)-\tau^2(X)),
        \end{align*}
        all of which visibly lift to $\widehat{\cal{O}}_{\cal{A},O_k}^{\tau=\text{id}}\cong \cal{O}_K[\![U,V]\!]^{\tau=\text{id}}$. This shows that $\cal{A}_k/\tau\cong (\cal{A}/\tau)_k$ in the order $3$, $p=3$ case. Note that the proof only requires $p=3$ and $\tau^2+\tau+1=0$. In particular, we have $\cal{A}_k/\tau^2\cong (\cal{A}/\tau^2)_k$ in the order $6$, $p=3$ case.
        \item Suppose $\tau$ has order $4$ and $p=2$. Since $\tau^2=-1$, by \cite[Proposition 5.6]{non-supersingular-kummer}, we have $\cal{A}_k/\tau^2\cong(\cal{A}/\tau^2)_k$. To prove $\cal{A}_k/\tau\cong (\cal{A}/\tau)_k$, it suffices to show
        \[
            ((\cal{A}/\tau^2)/\tau)_k\cong (\cal{A}/\tau^2)_k/\tau,
        \]
        which holds away from the image of $\cal{A}[1-\tau](k)$ in $\cal{A}_k/\tau^2$. Therefore, it remains to prove $\cal{A}_k/\tau\cong (\cal{A}/\tau)_k$ around points in $\cal{A}[1-\tau](k)$.

        By flatness and translation, we may replace $\cal{A}$ by the spectrum of the completed local ring at $O_k$. Then the proof is completely analogous to the order $3$, $p=3$ case: the injectivity follows by the same argument, and the surjectivity follows by the computation in Section \ref{Singularities and Galois action on the special fibre order 4}.
        \item  Suppose $\tau$ has order $6$ and $p=2$. Since $\tau^3=-1$, we have $\cal{A}_k/\tau^3\cong(\cal{A}/\tau^3)_k$ by \cite[Proposition 5.6]{non-supersingular-kummer}. It remains to prove
        \[
            ((\cal{A}/\tau^3)/\tau)_k\cong (\cal{A}/\tau^3)_k/\tau,
        \]
        which follows since $\tau$ has order $3$ on $\cal{A}/\tau^3$ and $p=2$.
        \item The order $6$, $p=3$ case is completely analogous to the order $6$, $p=2$ case. We have
        \[
            \cal{A}_k/\tau=(\cal{A}_k/\tau^2)/\tau\cong (\cal{A}/\tau^2)_k/\tau\cong (\cal{A}/\tau)_k,
        \]
        by the proof of the order $3$, $p=3$ case and the tameness of $\tau$ on $\cal{A}/\tau^2$.
    \end{enumerate}
\end{proof}

\begin{thm}\label{potential good redn}
    Let $A$ be an abelian surface over $K$ with ordinary good reduction. Let $\tau$ be an automorphism of $A$ of order $3$, $4$, or $6$, such that the minimal desingularization $X=\Kum(A,\tau)$ of $A/\tau$ is a K3 surface. Suppose we are in the wild case. Then there exist:
    \begin{enumerate}
        \item a finite extension $L$ of $K$ with residue field $k_L$, a smooth algebraic space $\cal{X}$ over $\cal{O}_L$;
        \item a proper morphism $\phi:\cal{X}\to \cal{A}_{\cal{O}_L}/\tau$ such that $\phi_L$ and $\phi_{k_L}$ are respectively minimal resolutions of $A_L/\tau$ and $\cal{A}_{k_L}/\tau$.
    \end{enumerate}
    In particular, $\cal{X}$ is a smooth model of $X_L$, and $X$ has potential good reduction.
\end{thm}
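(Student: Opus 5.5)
The plan is to construct the simultaneous resolution of $\cal{Y}=\cal{A}/\tau$ over $\cal{O}_K$ by relying on Artin's theory of simultaneous resolution of rational double points in families (Artin, Brieskorn–Tyurina; for algebraic spaces, Artin's ``Algebraic construction of Brieskorn's resolutions''). By Lemma \ref{commutativity isomorphism}, $\cal{Y}$ is flat over $\cal{O}_K$, with generic fibre $A/\tau$ and special fibre $\cal{A}_k/\tau$. By Proposition \ref{computation summary}, both fibres are normal surfaces with only rational double points. The generic singularities are of the types listed there ($9A_2$, $4A_3+6A_1$, $A_5+4A_2+5A_1$), and the special singularities are $3E_6^1$, $2E_7^3+D_4^1$, $E_6^1+4A_2+D_4^1$, or $E_7^1+E_6^1+5A_1$. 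Thus $\cal{Y}$ is a flat family of surfaces with rational double point singularities over the trait $\Spec\cal{O}_K$. Artin's theorem says that, after a finite extension $L/K$, the pulled-back family $\cal{Y}_{\cal{O}_L}$ admits a simultaneous resolution in the category of algebraic spaces. Concretely, this is a proper morphism $\phi:\cal{X}\to\cal{Y}_{\cal{O}_L}$ with $\cal{X}$ smooth over $\cal{O}_L$ such that $\phi$ restricts on each fibre to the minimal resolution. This is exactly the data in the statement.

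The steps, in order, are as follows.
\begin{enumerate}
\item Verify the hypotheses. We need flatness of $\cal{Y}$ over $\cal{O}_K$, which holds because $\cal{A}$ is flat and the invariants of a flat $\cal{O}_K$-algebra form a torsion-free module, and torsion-free means flat over a DVR. We also need the fibre identifications from Lemma \ref{commutativity isomorphism}, and that all fibre singularities are rational double points, which is Section \ref{computation section}.
\item Localize. Artin's deformation-theoretic statement is local at each singular point of the special fibre. The versal deformation of an RDP of type $\Gamma$ pulls back to a resolvable family after base change along the Weyl group cover. We therefore reduce to the complete local rings $\widehat{\cal{O}}_{\cal{Y},y}$ at the finitely many singular points $y$ of $\cal{Y}_k$. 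Each such ring is a deformation over $\cal{O}_K$ of the explicit rings computed in Section \ref{computation section}.
\item Obtain the local resolutions. For each $y$, the map from $\Spec\cal{O}_K$ to the base of the versal deformation lifts to the Weyl cover after a finite extension $L_y/K$. Take $L$ to be the compositum of the $L_y$. Artin's construction then yields local simultaneous resolutions, and these glue with the identity away from the singular locus to give the algebraic space $\cal{X}$.
\item Conclude. $\cal{X}_L\to A_L/\tau$ is a minimal resolution, so $\cal{X}_L\cong X_L$. Since $\cal{X}$ is smooth and proper over $\cal{O}_L$, it is a smooth model of $X_L$, possibly as an algebraic space. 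Its special fibre is the K3 surface obtained by minimally resolving $\cal{A}_{k_L}/\tau$. Therefore $X$ has potential good reduction.
\end{enumerate}

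The main obstacle is the fibrewise minimality of the simultaneous resolution. Artin's theorem gives a simultaneous resolution whose generic fibre is the minimal resolution of $\cal{Y}_L$. This is consistent only because the generic fibre's singularities are ``adjacent'' to the special ones: the Dynkin diagrams of the generic singularities lying over a special point must embed appropriately into the special diagram. For example, $3A_2\subset E_6$, $2A_3+A_1\subset E_7$, $4A_1\subset D_4$, $A_5\subset E_6$, and $A_5+A_1\subset E_7$. The Milnor/Tjurina count must also be compatible, so the second Betti numbers match: each generic configuration has $18$ exceptional curves in total. I would check this numerically against Proposition \ref{computation summary}: each listed generic and special configuration has $18$ exceptional curves, so $b_2$ is constant, namely $6$ or $4{+}18$ appropriately. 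A resolution that is minimal on the special fibre is then automatically minimal on the generic fibre, because the generic exceptional curves specialize into the special ones. If Artin's general theorem is hard to apply in mixed characteristic with wild singularities such as $E_6^1$ or $E_7^3$, the fallback is to cite the version used in \cite[Section 5]{non-supersingular-kummer} for $D_4^1$, which relies on Artin's algebraic-space result valid over any excellent Henselian DVR.
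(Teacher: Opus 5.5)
Your proposal is correct and is essentially the paper's proof: flatness of $\cal{A}/\tau$ (torsion-freeness over the DVR), the identification of its fibres via Lemma~\ref{commutativity isomorphism} together with the rational double points from Proposition~\ref{computation summary}, then Artin's simultaneous resolution after a finite extension $L/K$, giving $\cal{X}_L\cong X_L$. Your ``main obstacle'' is not really one, since fibrewise minimality is part of what Artin's simultaneous resolution provides (and constancy of $b_2$ in the smooth proper family $\cal{X}$ rules out a non-minimal generic fibre anyway); incidentally, two of your local configurations are misidentified --- over the $E_6^1$ point for $\operatorname{ord}(\tau)=6$, $p=2$ the generic fibre has $A_5+A_1$ (not $A_5$), and over the $E_7^1$ point for $\operatorname{ord}(\tau)=6$, $p=3$ it has $A_5+A_2$ (not $A_5+A_1$), which is exactly what your own rank count requires.
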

\begin{proof}
    By construction, $\cal{A}/\tau$ is flat. Also, by Lemma \ref{commutativity isomorphism} and Proposition \ref{computation summary}, the generic fibre and special fibre of $\cal{A}/\tau$ are normal with rational double points. Then by \cite{artin_simultaneous_res}, after a finite extension $L/K$, we obtain the required morphism $\phi:\cal{X}\to \cal{A}_{\cal{O}_L}$. Since $X_L$ is the minimal desingularization of $A_L/\tau$, we obtain $\cal{X}_L\cong X_L$ and $\cal{X}$ is a smooth model of $X_L$.
\end{proof}

Let $\ell\neq p$ be a prime number. Denote by $S_K$ and $S_k$ respectively the sets of exceptional divisors of the minimal desingularizations of $A_{\overline{K}}/\tau$ and $\cal{A}_{\overline{k}}/\tau$, both viewed as $G_K$-sets as in Section \ref{computation section}. We will first give a criterion for the good reduction of $\Kum(A,\tau)$ that works for all orders of $\tau$.

\begin{prop}\label{general criterion}
    The following are equivalent:
    \begin{enumerate}
        \item the generalized Kummer surface $X=\Kum(A,\tau)$ has good reduction;
        \item there is a $G_K$-equivariant isomorphism $\bb{Q}_\ell^{S_K}\cong \bb{Q}_\ell^{S_k}$.
    \end{enumerate}
\end{prop}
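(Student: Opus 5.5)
The approach is to combine Theorem \ref{comparison theorem} with the cohomology decomposition of Proposition \ref{cohomology computation}, applied on both fibres. Before Theorem \ref{comparison theorem} can be used, we must know that $X$ has good reduction after a finite \emph{unramified} extension and identify its canonical reduction $X^\circ$. Theorem \ref{potential good redn} only provides a possibly ramified extension $L/K$. We therefore argue as follows.

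For (1)$\Rightarrow$(2), good reduction of $X$ already makes $X$ trivially potentially good over $K$. Lemma \ref{find the caonical reduction}, applied with $\cal{Y}=\cal{A}/\tau$ and using Theorem \ref{potential good redn} and Lemma \ref{commutativity isomorphism}, shows that $X^\circ$ is the minimal desingularization of $\cal{A}_k/\tau$. For (2)$\Rightarrow$(1), we first note that $G_K$ acts on $H^2_{\et}(A_{\overline{K}},\bb{Q}_\ell)$ unramifiedly, since $A$ has good reduction. By Proposition \ref{cohomology computation}, the representation $H^2_{\et}(X_{\overline{K}},\bb{Q}_\ell)$ is then unramified exactly when $\bb{Q}_\ell^{S_K}$ is. Condition (2) forces $I_K$ to act trivially on the characters of $\bb{Q}_\ell^{S_K}$, hence trivially on the permutation representation. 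A permutation representation determines the $I_K$-action on the set only up to character, but trivial character on every element means $\sigma$ has all points fixed, since the trace counts fixed points. So $H^2$ is unramified. From this we deduce good reduction after an unramified extension, either by the Néron–Ogg–Shafarevich-type criterion of Liedtke–Matsumoto (potential good reduction plus unramified $H^2$) or simply by taking $L$ in Theorem \ref{potential good redn}. Lemma \ref{find the caonical reduction} again identifies $X^\circ$ with the resolution of $\cal{A}_k/\tau$.

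Next we compare cohomology. Proposition \ref{cohomology computation} over $\overline{K}$ gives
\[
H^2_{\et}(X_{\overline{K}},\bb{Q}_\ell)\cong H^2_{\et}(A_{\overline{K}},\bb{Q}_\ell)^{\tau=\mathrm{id}}\oplus \bb{Q}_\ell^{S_K}(-1).
\]
Over $k$, applied to $\cal{A}_k$, it gives
\[
H^2_{\et}(X^\circ_{\overline{k}},\bb{Q}_\ell)\cong H^2_{\et}(\cal{A}_{\overline{k}},\bb{Q}_\ell)^{\tau=\mathrm{id}}\oplus \bb{Q}_\ell^{S_k}(-1).
\]
Here rigidity and the K3 property on the special fibre hold because $X^\circ$ is K3 and $\tau$ is rigid on $\cal{A}_k$, as in the proof of Corollary \ref{ordinary}. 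Smooth proper base change gives a $G_K\times\langle\tau\rangle$-equivariant isomorphism $H^2_{\et}(A_{\overline{K}})\cong H^2_{\et}(\cal{A}_{\overline{k}})$, so the $\tau$-invariant pieces agree.

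By Theorem \ref{comparison theorem}, good reduction is then equivalent to
\[
V\oplus \bb{Q}_\ell^{S_K}(-1)\cong V\oplus\bb{Q}_\ell^{S_k}(-1)
\]
for the common piece $V$. Since both sides are semisimple up to the relevant structure, we would like to cancel $V$. Some care is needed here: $V$ may not be semisimple, but $\bb{Q}_\ell^{S}(-1)$ factors through a finite quotient and is semisimple. I would argue via Brauer–Nesbitt-type cancellation after semisimplification, together with the fact that the two permutation summands are semisimple. Alternatively, one can compare characters on the open subgroup acting through finite quotients. Twisting by $(1)$ then yields (2), and the converse direction is immediate.

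The main obstacle is the cancellation step and, secondarily, verifying the hypothesis of Theorem \ref{comparison theorem} (good reduction after an unramified extension) in the direction (2)$\Rightarrow$(1).
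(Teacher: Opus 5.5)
Your proposal is correct and is essentially the paper's proof. In both directions you first secure good reduction after an unramified extension: you get unramifiedness of $H^2_{\et}(X_{\overline{K}},\bb{Q}_\ell)$ from Proposition~\ref{cohomology computation}, then apply Theorem~\ref{potential good redn} and Liedtke--Matsumoto. You then identify $X^\circ$ with the resolution of $\cal{A}_k/\tau$ by Lemma~\ref{find the caonical reduction}, apply Theorem~\ref{comparison theorem}, and cancel the common $\tau$-invariant summand after semisimplification, using that the permutation representations are semisimple.

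The only slip is your alternative of ``simply taking $L$ in Theorem~\ref{potential good redn}''. That $L$ comes from Artin's simultaneous resolution and may be ramified, so the Liedtke--Matsumoto criterion is genuinely needed at that step, exactly as in the paper.
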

\begin{proof}
    We may assume that $X$ has good reduction after a finite unramified extension. Indeed, this follows from either (1) or (2): this obviously holds if we have (1). On the other hand, suppose there is a $G_K$-equivariant isomorphism $\bb{Q}_\ell^{S_K}\cong \bb{Q}_\ell^{S_k}$. By Proposition \ref{cohomology computation}, we have
    \[
        H^2_{\et}(X_{\overline{K}},\bb{Q}_\ell)\cong H^2_{\et}(A_{\overline{K}},\bb{Q}_\ell)^{\tau=\text{id}}\oplus \bb{Q}_\ell^{S_K}(-1).
    \]
    Since $A$ has good reduction, $H^2_{\et}(A_{\overline{K}},\bb{Q}_\ell)^{\tau=\text{id}}$ is unramified. Also, $\bb{Q}_\ell^{S_K}\cong \bb{Q}_\ell^{S_k}$ implies that $\bb{Q}_\ell^{S_K}(-1)$ is unramified. Thus, $H^2_{\et}(X_{\overline{K}},\bb{Q}_\ell)$ is unramified. By Theorem \ref{potential good redn} and \cite{liedtke_2017_good_reduction_k3surfaces}, $X$ has good reduction after a finite unramified extension.

    Now assume $X$ has good reduction after a finite unramified extension. By Lemma \ref{find the caonical reduction}, $X^\circ$ is the minimal desingularization of $\cal{A}_k/\tau$. By Proposition \ref{cohomology computation}, there is a $G_K$-equivariant isomorphism
    \[
        H^2_{\et}(X^\circ_{\overline{k}},\bb{Q}_\ell)\cong H^2_{\et}(\cal{A}_{\overline{k}},\bb{Q}_\ell)^{\tau=\text{id}}\oplus \bb{Q}_\ell^{S_k}(-1).
    \]
    By Theorem \ref{comparison theorem}, $X$ has good reduction if and only if $H^2_{\et}(X_{\overline{K}},\bb{Q}_\ell)\cong H^2_{\et}(X^\circ_{\overline{k}},\bb{Q}_\ell)$ as $G_K$-representations. Then it suffices to show that $\bb{Q}_\ell^{S_K}\cong \bb{Q}_\ell^{S_k}$ if and only if $H^2_{\et}(X_{\overline{K}},\bb{Q}_\ell)\cong H^2_{\et}(X^\circ_{\overline{k}},\bb{Q}_\ell)$. Since $A$ has good reduction, $H^2_{\et}(A_{\overline{K}},\bb{Q}_\ell)^{\tau=\text{id}}\cong H^2_{\et}(\cal{A}_{\overline{k}},\bb{Q}_\ell)^{\tau=\text{id}}$. Then the ``only if'' implication is obvious, so we are left with the ``if'' implication.
    
    Note that $\bb{Q}_\ell^{S_K}$ and $\bb{Q}_\ell^{S_k}$ are semi-simple since the $G_K$-actions factor through a finite quotient. Suppose $H^2_{\et}(X_{\overline{K}},\bb{Q}_\ell)\cong H^2_{\et}(X^\circ_{\overline{k}},\bb{Q}_\ell)$. Tensoring with $\bb{Q}_\ell(1)$ and passing to the semi-simplification, we obtain
    \[
        H^2_{\et}(A_{\overline{K}},\bb{Q}_\ell(1))^{\tau=\text{id},ss}\oplus \bb{Q}_\ell^{S_K}\cong H^2_{\et}(\cal{A}_{\overline{k}},\bb{Q}_\ell(1))^{\tau=\text{id},ss}\oplus \bb{Q}_\ell^{S_k}.
    \]
    Since $A$ has good reduction, $H^2_{\et}(A_{\overline{K}},\bb{Q}_\ell(1))^{\tau=\text{id},ss}\cong H^2_{\et}(\cal{A}_{\overline{k}},\bb{Q}_\ell(1))^{\tau=\text{id},ss}$. After cancellation, we obtain $\bb{Q}_\ell^{S_K}\cong \bb{Q}_\ell^{S_k}$.
\end{proof}

Now we substitute our computation of $S_K$ and $S_k$ in Proposition \ref{computation summary} into Proposition \ref{general criterion} to prove Theorem \ref{main theorem}. We do this case by case.

\subsection{The order $3$, $p=3$ case}

We now prove Theorem \ref{main theorem}(1). By Proposition \ref{general criterion}, it suffices to prove that there is a $G_K$-equivariant isomorphism $\bb{Q}_\ell^{S_K}\cong \bb{Q}_\ell^{S_k}$ if and only if $\omega\in K$ and there is a $G_K$-equivariant splitting to the short exact sequence
\[
    0\to \cal{A}[1-\tau]^\circ(\overline{K})\to A[1-\tau](\overline{K})\to \cal{A}[1-\tau](\overline{k})\to 0.
\]

By Proposition \ref{computation summary}, we have $G_K$-equivariant bijections:
\begin{align*}
    S_K & \cong A[1-\tau](\overline{K}) \times \{\omega, \omega^2\},\\
    S_k & \cong \mathcal{A}[1-\tau](\overline{k})\times D(\cal{A}[1-\tau]^\circ)(\overline{k})\times \underline{2}.
\end{align*}
Let $V_1=\cal{A}[1-\tau]^\circ(\overline{K})$, $V_2=\cal{A}[1-\tau](\overline{k})$, $V=V_1\oplus V_2$, $W=A[1-\tau](\overline{K})$.

\begin{lem}\label{order 3 refinement}
    The following are equivalent:
    \begin{enumerate}
        \item there is a $G_K$-equivariant isomorphism $\bb{Q}_\ell^{S_K}\cong \bb{Q}_\ell^{S_k}$;
        \item $\omega\in K$ and there is a $G_K$-equivariant isomorphism $\bb{Q}_\ell^W\cong \bb{Q}_\ell^V$.
    \end{enumerate}
\end{lem}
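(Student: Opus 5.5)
We have $S_K\cong W\times\{\omega,\omega^2\}$ and $S_k\cong V_2\times D\times\underline{2}$, where $D=D(\cal{A}[1-\tau]^\circ)(\overline{k})$. Permutation modules of products are tensor products, so
\[
\bb{Q}_\ell^{S_K}\cong \bb{Q}_\ell^W\otimes \bb{Q}_\ell^{\{\omega,\omega^2\}},\qquad \bb{Q}_\ell^{S_k}\cong \bb{Q}_\ell^{V_2}\otimes\bb{Q}_\ell^{D}\otimes \bb{Q}_\ell^{2}.
\]
The plan is to compare these using characters. All the actions factor through finite quotients, so everything is semisimple and a representation is determined by its character. For a permutation representation, the character at $g$ is the number of fixed points. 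Write $\epsilon:G_K\to\{\pm1\}$ for the character giving the action of $G_K$ on $\{\omega,\omega^2\}$; it is trivial exactly when $\omega\in K$. Then $\bb{Q}_\ell^{\{\omega,\omega^2\}}\cong \mathbf{1}\oplus\epsilon$.

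Next I would show that $\epsilon$ also governs $D$. The group $D$ is cyclic of order $3$, so $G_K$ acts on it through a character $\eta:G_K\to \bb{F}_3^\times=\{\pm1\}$, and it fixes $0$. Hence $\bb{Q}_\ell^D\cong \mathbf{1}\oplus\mathbf{1}'\oplus\eta'$ in the appropriate sense; more usefully, the number of fixed points of $g$ on $D$ is $3$ if $\eta(g)=1$ and $1$ otherwise. I will also use that $V_1=\cal{A}[1-\tau]^\circ(\overline{K})$ is of order $3$. By Cartier duality with the \'etale part of the dual, as in the proof of Corollary \ref{group scheme embedding special case}, the character of $V_1$ is $\eta^{-1}\cdot(\text{cyclotomic mod }3)=\eta\epsilon$, since the mod $3$ cyclotomic character is exactly $\epsilon$. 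So $V_1\cong D\otimes\mu_3$.

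For (2)$\Rightarrow$(1): if $\omega\in K$, then $\epsilon=1$. Corollary \ref{group scheme embedding special case} gives $D\cong V_1$, and $\{\omega,\omega^2\}\cong\underline{2}$. Then
\[
\bb{Q}_\ell^{S_k}\cong \bb{Q}_\ell^{V_2}\otimes\bb{Q}_\ell^{V_1}\otimes\bb{Q}_\ell^2\cong \bb{Q}_\ell^{V}\otimes \bb{Q}_\ell^2\cong \bb{Q}_\ell^{W}\otimes\bb{Q}_\ell^{2}\cong\bb{Q}_\ell^{S_K}.
\]

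For (1)$\Rightarrow$(2), the main step is to force $\omega\in K$ by comparing fixed-point counts. Suppose $g\in G_K$ with $\epsilon(g)=-1$. Then $g$ has no fixed points on $\{\omega,\omega^2\}$, so $|S_K^g|=0$. On the other side, $|S_k^g|=|V_2^g|\cdot|D^g|\cdot 2$, and each factor is at least $1$ because $0\in V_2$ and $0\in D$ are fixed. So $|S_k^g|\ge 2$, contradicting equality of characters. Hence $\epsilon$ is trivial, which means $\omega\in K$.

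With $\omega\in K$, we get $D\cong V_1$ as above. Equality of characters then gives, for every $g$,
\[
2|W^g|=2|V_2^g||V_1^g|=2|V^g|,
\]
so $\bb{Q}_\ell^W\cong\bb{Q}_\ell^V$.

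The step I expect to be the main obstacle is the identification of $D$ with $V_1$, together with its twist by $\epsilon$ when $\omega\notin K$. However, the argument above only needs this identification once $\omega\in K$ is already known, and in that case Corollary \ref{group scheme embedding special case} supplies it directly. So the delicate point reduces to the fixed-point argument, which uses only that $0$ is fixed in $V_2$ and in $D$.
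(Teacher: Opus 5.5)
Your proposal is correct and is essentially the paper's proof: the fixed-point count ($|S_K^g|=0$ versus $|S_k^g|\ge 2$ for $g$ moving $\omega$) forces $\omega\in K$, after which Corollary \ref{group scheme embedding special case} gives $D(\cal{A}[1-\tau]^\circ)(\overline{k})\cong V_1$, so $S_K\cong W\times\underline{2}$ and $S_k\cong V\times\underline{2}$, and semisimplicity (equality of characters) settles both directions. Your aside $V_1\cong D\otimes\mu_3$ is correct but not needed, and the stray decomposition of $\bb{Q}_\ell^D$ should read $\mathbf{1}\oplus\mathbf{1}\oplus\eta$.
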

\begin{proof}
    We may assume that $\omega\in K$. Indeed, this is part of (2) and it also follows if (1) holds: suppose there is a $G_K$-equivariant isomorphism $\bb{Q}_\ell^{S_K}\cong \bb{Q}_\ell^{S_k}$. Then for every $\sigma\in G_K$, the traces of $\sigma$ on $\bb{Q}_\ell^{S_K}$ and $ \bb{Q}_\ell^{S_k}$ are the same. In particular, the numbers of fixed points of $\sigma$ on $S_K$ and $S_k$ are the same. Suppose $\omega\notin K$. Then there exists $\sigma\in G_K$ such that $\sigma(\omega)=\omega^2$. Then $\sigma$ has no fixed points on $S_K$, while $\sigma$ has at least two fixed points on $S_k$. This is a contradiction.
    
    Now assume $\omega\in K$. Then by Corollary \ref{group scheme embedding special case}, $D(\cal{A}[1-\tau]^\circ)(\overline{k})\cong \cal{A}[1-\tau]^\circ(\overline{K})=V_1$. In particular, we have $G_K$-equivariant bijections $S_K\cong W\times \underline{2}$ and $S_k\cong V\times \underline{2}$.
    Therefore, as $G_K$-representations, $\bb{Q}_\ell^{S_K}\cong \bb{Q}_\ell^{S_k}$ if and only if $\bb{Q}_\ell^W\cong \bb{Q}_\ell^V$ by semi-simplicity.
\end{proof}

We are now ready to prove Theorem \ref{main theorem}(1).

\begin{proof}[Proof of Theorem \ref{main theorem}(1)]
    This proof is analogous to that of \cite[Theorem 4.6]{non-supersingular-kummer}. By Lemma \ref{order 3 refinement}, it remains to prove that $\bb{Q}_\ell^W\cong \bb{Q}_\ell^V$ as $G_K$-representations if and only if there is a $G_K$-equivariant splitting to the short exact sequence
    \begin{equation}\label{VW exact sequence}
        0\to V_1\to W\to V_2\to 0.
    \end{equation}

    The ``if'' implication is clear. For the ``only if'' implication, assume $\bb{Q}_\ell^W\cong \bb{Q}_\ell^V$. Let $P\subset \GL(W)$ be the subgroup that preserves $V_1$. Let $G_W$ and $G_V$ respectively be the images of $G_K$ in $\GL(W)$ and $\GL(V_1)\times \GL(V_2)$. The natural homomorphism $\pi:P\to \GL(V_1)\times \GL(V_2)$ induces a surjection $G_W\to G_V$. Also, $\bb{Q}_\ell^W\cong \bb{Q}_\ell^V$ implies that for all $\sigma\in G_K$, $\sigma$ is trivial on $W$ if and only if it is on $V$. This implies that $\lvert G_W\rvert=\lvert G_V\rvert$, and therefore $G_W\cong G_V$.

    We first choose a section (possibly not $G_K$-equivariant) of $W\to V_2$. This induces a section $\sigma$ of $\pi:P\to \GL(V_1)\times \GL(V_2)$. Denote by $R_u(P)$ the kernel of $\pi$. Then the following short exact sequence of groups splits:
    \begin{equation}\label{general linear group split}
        1\to R_u(P)\to P\to \GL(V_1)\times \GL(V_2)\to 1.
    \end{equation}
    To prove that \eqref{VW exact sequence} splits, it suffices to show that $G_W$ is conjugate to $\sigma(G_V)$ by an element in $R_u(P)$.

    Take the pullback of \eqref{general linear group split} via $G_V\subset \GL(V_1)\times\GL(V_2)$ to get the following split short exact sequence:
    \[
        1\to R_u(P)\to R_u(P)\rtimes G_V\to G_V\to 1,
    \]
    where $G_V$ acts on $R_u(P)$ by conjugation. Since $G_W\cong \pi(G_W)=G_V$, it follows that $G_W$ is a complement to $R_u(P)$ in $R_u(P)\rtimes G_V$. It suffices to show that $H^1(G_V,R_u(P))=0$: indeed, by \cite[Proposition 9.21]{rotman_homological_algebra_2009}, this implies that all complements to $R_u(P)$ in $R_u(P)\rtimes G_V$ are conjugate. In particular, $\sigma(G_V)$ and $G_W$ are conjugate by some element in $R_u(P)$.

    Since $V_1$ and $V_2$ both have dimension $1$ over $\bb{F}_3$, $\GL(V_1)\times \GL(V_2)\cong \bb{Z}/2\times\bb{Z}/2$. In particular, $G_V$ is a $2$-group. Also, elements in $R_u(P)$ are of the form
    \[
        \begin{pmatrix}
            1 & a\\
            0 & 1
        \end{pmatrix}
    \]
    for $a\in \bb{F}_3$. Thus, $R_u(P)$ has order $3$. Then $H^1(G_V,R_u(P))$ is annihilated by both $2$ and $3$, thus trivial.
\end{proof}

\subsection{The order $4$, $p=2$ case}

We now prove Theorem \ref{main theorem}(2). By Proposition \ref{general criterion}, it suffices to prove that there is a $G_K$-equivariant isomorphism $\bb{Q}_\ell^{S_K}\cong \bb{Q}_\ell^{S_k}$ if and only if $i\in K$, $A[1-\tau](\overline{K})$ is trivial, and there is a $G_K\times\langle\tau\rangle$-equivariant splitting to the short exact sequence
\begin{equation}\label{order 4 sequence}
    0\to \cal{A}[2]^\circ(\overline{K})\to A[2](\overline{K})\overset{\rho}{\to} \cal{A}[2](\overline{k})\to 0.
\end{equation}

By Proposition \ref{computation summary}, we have $G_K$-equivariant bijections:
\begin{align*}
    S_K & \cong A[1-\tau](\overline{K}) \times \{1, \pm i\} \sqcup (A[2]({\overline{K}})\setminus A[1-\tau](\overline{K}))/\tau,\\
    S_k & \cong \underline{14}\sqcup (\mathcal{A}[2]^\circ(\overline{K})\times(\cal{A}[2](\overline{k})\setminus\cal{A}[1-\tau](\overline{k})))/\tau.
\end{align*}
We may simplify the problem. Let
\begin{align*}
    S_K' & =(A[2](\overline{K})\setminus (A[1-\tau](\overline{K})+\mathcal{A}[2]^\circ(\overline{K})))/\tau,\\
    S_k' & =(\mathcal{A}[2]^\circ(\overline{K})\times(\cal{A}[2](\overline{k})\setminus\cal{A}[1-\tau](\overline{k})))/\tau.
\end{align*}

\begin{lem}\label{order 4 refinement}
    The following are equivalent:
    \begin{enumerate}
        \item there is a $G_K$-equivariant isomorphism $\bb{Q}_\ell^{S_K}\cong \bb{Q}_\ell^{S_k}$;
        \item $i\in K$, $A[1-\tau](\overline{K})$ is trivial, and there is a $G_K$-equivariant isomorphism $\bb{Q}_\ell^{S_K'}\cong \bb{Q}_\ell^{S_k'}$.
    \end{enumerate}
\end{lem}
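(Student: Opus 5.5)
The plan is to compare numbers of fixed points of elements $\sigma\in G_K$ on $S_K$ and $S_k$. Traces of $\sigma$ on $\bb{Q}_\ell^{S_K}$ and $\bb{Q}_\ell^{S_k}$ are exactly these counts, as in the proof of Lemma \ref{order 3 refinement}. Throughout we use the bijections of Proposition \ref{computation summary}(3) and the degrees in the table of Section \ref{fixed points and degrees}: $\lvert A[1-\tau](\overline{K})\rvert=4$, $\lvert \cal{A}[1-\tau]^\circ(\overline{K})\rvert=2$, $\lvert \cal{A}[2]^\circ(\overline{K})\rvert=4$. Hence $\lvert S_K\rvert=12+6=18=14+4=\lvert S_k\rvert$, since $\tau$ acts freely on $\cal{A}[2](\overline{k})\setminus\cal{A}[1-\tau](\overline{k})$.

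\emph{(1)$\Rightarrow$ the first two conditions of (2).} Every $\sigma\in G_K$ fixes at least the $14$ points of $\underline{14}\subset S_k$, so it fixes at least $14$ points of $S_K$. If $\sigma(i)=-i$, then $\sigma$ fixes only the elements of $A[1-\tau](\overline{K})^\sigma\times\{1\}$ in the first piece of $S_K$. So it fixes at most $4+6=10<14$ points, a contradiction; hence $i\in K$. Now $\sigma$ fixes $3\lvert A[1-\tau](\overline{K})^\sigma\rvert$ points of the first piece and at most $6$ of the second. The inequality $3\lvert A[1-\tau](\overline{K})^\sigma\rvert+6\geq 14$ forces the subgroup $A[1-\tau](\overline{K})^\sigma$ to have order $4$. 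So $G_K$ acts trivially on $A[1-\tau](\overline{K})$.

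\emph{Key decomposition.} Assume now that $i\in K$ and $A[1-\tau](\overline{K})$ is trivial. I will show that, $G_K$-equivariantly,
\[
    S_K\cong \underline{14}\sqcup S_K'.
\]
The first piece $A[1-\tau](\overline{K})\times\{1,\pm i\}$ is then trivial of size $12$. The remaining six $\tau$-orbits in $(A[2](\overline{K})\setminus A[1-\tau](\overline{K}))/\tau$ split into those inside $B=A[1-\tau](\overline{K})+\cal{A}[2]^\circ(\overline{K})$ and $S_K'$. Since $\cal{A}[1-\tau]^\circ$ is the intersection of $\cal{A}[1-\tau]$ and $\cal{A}[2]^\circ$, we get $\lvert B\rvert=4\cdot 4/2=8$. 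So $B\setminus A[1-\tau](\overline{K})$ consists of $4$ points, forming $2$ orbits.

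For $x=a+c$ in $B$ with $a\in A[1-\tau](\overline{K})$ and $c\in\cal{A}[2]^\circ(\overline{K})$, I would show two things:
\begin{itemize}
    \item $(1-\tau)x=(1-\tau)c$ lies in $\cal{A}[1-\tau]^\circ(\overline{K})$ and is nonzero, because $(1-\tau)^2=-2\tau$ kills $A[2]$ and $1-\tau$ preserves $\cal{A}[2]^\circ$. Hence the $\tau$-orbit of $x$ is the coset $x+\cal{A}[1-\tau]^\circ(\overline{K})$.
    \item $\sigma x-x=\sigma c-c$ lies in $\cal{A}[1-\tau]^\circ(\overline{K})$. This holds because $G_K$ acts on the order-$2$ quotient $\cal{A}[2]^\circ(\overline{K})/\cal{A}[1-\tau]^\circ(\overline{K})$ trivially, and the subgroup is $G_K$-stable.
\end{itemize}
So $G_K$ fixes each of these two orbits, giving the $2$ extra trivial points. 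Together with the first piece this yields the $\underline{14}$. The same count gives $S_K'$ exactly $4$ orbits, matching $\lvert S_k'\rvert=4$.

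\emph{Conclusion.} With the decomposition, $\bb{Q}_\ell^{S_K}\cong\bb{Q}_\ell^{\underline{14}}\oplus\bb{Q}_\ell^{S_K'}$ and $\bb{Q}_\ell^{S_k}\cong\bb{Q}_\ell^{\underline{14}}\oplus\bb{Q}_\ell^{S_k'}$. These representations are semisimple because the actions factor through finite quotients. Cancelling the common trivial summand gives (1)$\Leftrightarrow$ the isomorphism $\bb{Q}_\ell^{S_K'}\cong\bb{Q}_\ell^{S_k'}$, which together with the previous step proves both implications. I expect the main obstacle to be the key decomposition, specifically checking that $B\setminus A[1-\tau](\overline{K})$ contributes Galois-fixed orbits. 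This relies on identifying $\cal{A}[1-\tau]^\circ$ with $\cal{A}[1-\tau]\cap\cal{A}[2]^\circ$, and on $(1-\tau)$ mapping $\cal{A}[2]^\circ$ into $\cal{A}[1-\tau]^\circ$ with nonzero image off $\cal{A}[1-\tau]^\circ$.
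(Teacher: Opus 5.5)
Your proposal is correct and follows essentially the same route as the paper. The paper also uses fixed-point counting to force $i\in K$ and a trivial Galois action on $A[1-\tau](\overline{K})$, then decomposes $S_K\cong\underline{14}\sqcup S_K'$ using the order-$8$ group $A[1-\tau](\overline{K})+\mathcal{A}[2]^\circ(\overline{K})$, and finally cancels by semisimplicity. The only difference is minor: the paper shows the two extra $\tau$-orbits are $G_K$-stable by writing them as $\mathcal{A}[2]^\circ(\overline{K})\setminus\mathcal{A}[1-\tau]^\circ(\overline{K})$ and $B\setminus(A[1-\tau](\overline{K})\cup\mathcal{A}[2]^\circ(\overline{K}))$, which are differences of $G_K$-stable sets, whereas you identify them as cosets of $\mathcal{A}[1-\tau]^\circ(\overline{K})$ via $(1-\tau)^2=-2\tau$, which works equally well.
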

\begin{proof}
    First, we may assume that $i\in K$ and $A[1-\tau](\overline{K})$ is trivial. Indeed, they are part of (2) and they also follow if (1) holds: suppose there is a $G_K$-equivariant isomorphism $\bb{Q}_\ell^{S_K}\cong \bb{Q}_\ell^{S_k}$. Then for every $\sigma\in G_K$, the numbers of fixed points of $\sigma$ in $S_K$ and $S_k$ are the same. Suppose $i\notin K$. Then there exists $\sigma\in G_K$ such that $\sigma(i)=-i$. Then $\sigma$ has at most ten fixed points on $S_K$, while at least fourteen fixed points on $S_k$, a contradiction. Next, suppose $A[1-\tau](\overline{K})$ is non-trivial. Then there exists $\sigma\in G_K$ such that $\sigma$ has either one or two fixed points on $A[1-\tau](\overline{K})$. Then $\sigma$ has at most twelve fixed points on $S_K$, while at least fourteen fixed points on $S_k$, again a contradiction.

    Now assume $i\in K$ and $A[1-\tau](\overline{K})$ is trivial. Then
    \[
        S_K\cong 12\sqcup (A[2]({\overline{K}})\setminus A[1-\tau](\overline{K}))/\tau.
    \]
    Since $S_k=\underline{14}\sqcup S_k'$, by semi-simplicity, it suffices to prove
    \[
        (A[2]({\overline{K}})\setminus A[1-\tau](\overline{K}))/\tau\cong  S_K'\sqcup \underline{2},
    \]
    i.e. $(A[1-\tau](\overline{K})+\cal{A}[2]^\circ(\overline{K}))\setminus A[1-\tau](\overline{K})$ consists of two $\tau$-orbits, each preserved by $G_K$.

    First, we claim that the inclusion $\cal{A}[1-\tau]^\circ(\overline{K})\subset A[1-\tau](\overline{K})\cap \cal{A}[2]^\circ(\overline{K})$ is an equality. Indeed, both $A[1-\tau](\overline{K})$ and $\cal{A}[2]^\circ(\overline{K})$ have order $4$, and $A[1-\tau](\overline{K})\cap \cal{A}[2]^\circ(\overline{K})$ has order $2$ or $4$. To prove the claim, it remains to show that $A[1-\tau](\overline{K})\neq \cal{A}[2]^\circ(\overline{K})$, which is clear.

    In particular, this claim shows that $A[1-\tau](\overline{K})+\cal{A}[2]^\circ(\overline{K})$ has order $8$. Therefore, $(A[1-\tau](\overline{K})+\cal{A}[2]^\circ(\overline{K}))\setminus A[1-\tau](\overline{K})$ has cardinality $4$ and consists of two $\tau$-orbits. Explicitly, one orbit is
    \[
        \cal{A}[2]^\circ(\overline{K})\setminus \cal{A}[1-\tau]^\circ(\overline{K}),
    \]
    which is preserved by $G_K$. The other is
    \[
        (A[1-\tau](\overline{K})+\mathcal{A}[2]^\circ(\overline{K}))\setminus (A[1-\tau](\overline{K})\cup \mathcal{A}[2]^\circ(\overline{K})),
    \]
    also preserved by $G_K$.
\end{proof}

\begin{proof}[Proof of Theorem \ref{main theorem}(2)]
    By Lemma \ref{order 4 refinement}, it suffices to prove that there is a $G_K$-equivariant isomorphism $\bb{Q}_{\ell}^{S_K'}\cong \bb{Q}_{\ell}^{S_k'}$ if and only if there is a $G_K\times\langle\tau\rangle$-equivariant splitting to \eqref{order 4 sequence}.

    We begin with the ``only if'' direction. First, we claim that $G_K$ fixes some element in $S_K'$. To see this, note that each of the two $\tau$-orbits in
    \[
        \cal{A}[1-\tau]^\circ(\overline{K})\times(\cal{A}[2](\overline{k})\setminus \cal{A}[1-\tau](\overline{k}))
    \]
    is preserved by $G_K$. This gives two elements in $S_k'$ fixed by $G_K$. Thus, the only non-trivial $G_K$ action on $S_k'$ exchanges the remaining two elements. Then since $\bb{Q}_{\ell}^{S_K'}\cong \bb{Q}_{\ell}^{S_k'}$, the $G_K$-action on $S_k'$ and therefore on $S_K'$ factors through $G_K/H$ where $H$ is a subgroup of $G_K$ of index $2$. Pick $\sigma\in G_K\setminus H$. To prove the claim, it suffices to prove that $\sigma$ fixes some element in $S_K'$, which is clear since $\sigma$ fixes at least two elements in $S_k'$.

    Given this claim, let $\{P,\tau(P)\}$ be a $\tau$-orbit in $A[2](\overline{K})\setminus(\cal{A}[2]^\circ(\overline{K})+A[1-\tau](\overline{K}))$ preserved by $G_K$. Let $W=P+\tau(P)\in A[1-\tau](\overline{K})$. Note that 
    \[
        \rho^{-1}(\cal{A}[1-\tau](\overline{k}))=\cal{A}[2]^\circ(\overline{K})+A[1-\tau](\overline{K}).
    \]
    In particular, $\rho(P)\notin \cal{A}[1-\tau](\overline{k})$ and $\rho(P)\neq \rho(\tau(P))$. Therefore, $\rho(P)$ and $\rho(\tau(P))$ span $\cal{A}[2](\overline{k})$ and $\rho$ induces $\{0,P,\tau(P),W\}\cong \cal{A}[2](\overline{k})$. This shows that \eqref{order 4 sequence} splits $G_K\times\langle\tau\rangle$-equivariantly.

    Conversely, suppose there is a $G_K\times\langle\tau\rangle$-equivariant isomorphism $A[2](\overline{K})\cong \cal{A}[2]^\circ(\overline{K})\times \cal{A}[2](\overline{k})$. Then we have the following $G_K\times\langle\tau\rangle$-equivariant isomorphisms:
    \begin{align*}
        & A[2](\overline{K})\setminus (A[1-\tau](\overline{K})+\mathcal{A}[2]^\circ(\overline{K}))\\
        \cong{} & (\cal{A}[2]^\circ(\overline{K})\times \cal{A}[2](\overline{k}))\setminus (\cal{A}[2]^\circ(\overline{K})\times \cal{A}[1-\tau](\overline{k}))\\
        ={} & \cal{A}[2]^\circ(\overline{K})\times (\cal{A}[2](\overline{k})\setminus \cal{A}[1-\tau](\overline{k}))
    \end{align*}
    Taking the $\tau$-orbits gives a $G_K$-equivariant bijection $S_K'\cong S_k'$.
\end{proof}

\subsection{The order $6$, $p=2$ case}

We now prove Theorem \ref{main theorem}(3). By Proposition \ref{general criterion}, it suffices to prove that there is a $G_K$-equivariant isomorphism $\bb{Q}_\ell^{S_K}\cong \bb{Q}_\ell^{S_k}$ if and only if there is a $G_K$-equivariant splitting to the short exact sequence
\begin{equation}\label{order 6 p=2 sequence}
    0\to \cal{A}[2]^\circ(\overline{K})\to A[2](\overline{K})\overset{\rho}{\to} \cal{A}[2](\overline{k})\to 0.
\end{equation}

By Proposition \ref{computation summary}, we have $G_K$-equivariant bijections:
\begin{align*}
    S_K & \cong \{1, \pm \omega, \pm \omega^2\} \sqcup (A[1-\tau^2](\overline{K})\setminus 0)/\tau \times \{\omega, \omega^2\} \sqcup (A[2]({\overline{K}})\setminus 0)/\tau,\\
    S_k & \cong \{1,\omega,\omega^2\}\times\underline{2}\sqcup (A[1-\tau^2](\overline{K})\setminus 0)/\tau\times \{\omega, \omega^2\}\sqcup (\cal{A}[2]^\circ(\overline{K})\times (\cal{A}[2](\overline{k})\setminus 0))/\tau.
\end{align*}
Let
\begin{align*}
    S_K' & = (A[2]({\overline{K}})\setminus \cal{A}[2]^\circ(\overline{K}))/\tau,\\
    S_k' & = (\cal{A}[2]^\circ(\overline{K})\times (\cal{A}[2](\overline{k})\setminus 0))/\tau.
\end{align*}

\begin{lem}\label{order 6 p=2 refinement}
    As $G_K$-representations, $\bb{Q}_\ell^{S_K}\cong \bb{Q}_\ell^{S_k}$ if and only if $\bb{Q}_\ell^{S_K'}\cong \bb{Q}_\ell^{S_k'}$.
\end{lem}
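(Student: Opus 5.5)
We compare the decompositions of $S_K$ and $S_k$ from Proposition \ref{computation summary}(4) and cancel the common summands. Both sets contain the $G_K$-set $(A[1-\tau^2](\overline{K})\setminus 0)/\tau\times\{\omega,\omega^2\}$. Since all the permutation representations involved are semi-simple (the actions factor through finite quotients), this common summand cancels. The claim therefore reduces to showing that
\[
    \bb{Q}_\ell^{\{1,\pm\omega,\pm\omega^2\}}\oplus \bb{Q}_\ell^{(A[2](\overline{K})\setminus 0)/\tau}\cong \bb{Q}_\ell^{\{1,\omega,\omega^2\}\times\underline{2}}\oplus\bb{Q}_\ell^{S_k'}
\]
holds if and only if $\bb{Q}_\ell^{S_K'}\cong\bb{Q}_\ell^{S_k'}$.

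First, $\{1,\pm\omega,\pm\omega^2\}\cong\{1,\omega,\omega^2\}\times\underline{2}$ as $G_K$-sets. Indeed, $G_K$ fixes $\pm1$, and it acts on $\{\pm\omega,\pm\omega^2\}$ only through $\omega\mapsto\omega^{\pm1}$, since it fixes signs; this is exactly the action on $\{\omega,\omega^2\}\times\underline{2}$. These summands therefore also cancel.

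It remains to show
\[
    (A[2](\overline{K})\setminus 0)/\tau\cong S_K'\sqcup (\cal{A}[2]^\circ(\overline{K})\setminus 0)/\tau
\]
as $G_K$-sets, and that $(\cal{A}[2]^\circ(\overline{K})\setminus 0)/\tau$ is a single $G_K$-fixed point. The displayed decomposition is immediate: $\cal{A}[2]^\circ(\overline{K})$ is a $\tau$-stable and $G_K$-stable subgroup, since it is the kernel of the $\tau$- and $G_K$-equivariant reduction map $\rho$. So $A[2](\overline{K})\setminus 0$ splits $\tau$-equivariantly as $(A[2](\overline{K})\setminus\cal{A}[2]^\circ(\overline{K}))\sqcup(\cal{A}[2]^\circ(\overline{K})\setminus 0)$.

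For the second point, use the table in Section \ref{fixed points and degrees}: $\cal{A}[2]^\circ(\overline{K})$ has order $4$, and $\tau$ fixes only the origin (the row $j=1$ has order $1$). Since $\tau^3=-1$ acts trivially on $2$-torsion, $\tau$ acts on $\cal{A}[2]^\circ(\overline{K})\setminus 0$ through an element of order $3$ (equivalently, $\tau^2+\tau+1=0$ there) without fixed points. Hence it permutes the three nonzero elements transitively, giving one $\tau$-orbit. This orbit is automatically $G_K$-stable, so it contributes a trivial summand $\bb{Q}_\ell$.

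The same reasoning identifies the orbits in $S_K'$ and $S_k'$: each consists of three elements, since $\tau$ has no nonzero fixed points on $A[2]$ or on $\cal{A}[2]^\circ\times\cal{A}[2](\overline{k})$. After this step both sides contain exactly one extra trivial summand, and cancelling it by semi-simplicity gives the equivalence. The only delicate step is the bookkeeping of these trivial summands. Counting cardinalities is a useful check: on the left, $6+5$ elements; on the right, $6+|S_k'|$ with $|S_k'|=4\cdot3/3=4$, plus the one extra orbit; both $S_K'$ and $S_k'$ have $4$ elements. This confirms that exactly one trivial summand must cancel.
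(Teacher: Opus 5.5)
Your overall strategy matches the paper's: cancel the common $4A_2$ summand $(A[1-\tau^2](\overline{K})\setminus 0)/\tau\times\{\omega,\omega^2\}$ by semi-simplicity, then split off the single $G_K$-fixed $\tau$-orbit $\cal{A}[2]^\circ(\overline{K})\setminus 0$ from $(A[2](\overline{K})\setminus 0)/\tau$. The bookkeeping of the remaining summands is wrong, however, and as written the argument does not close.

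The set $\{1,\pm\omega,\pm\omega^2\}$ indexes the five exceptional curves of the $A_5$ singularity. It does not contain $-1$, so it cannot be isomorphic to the six-element set $\{1,\omega,\omega^2\}\times\underline{2}$ indexing the $E_6$ curves.

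There is also no extra trivial summand on the special-fibre side. After cancelling the $4A_2$ part, that side is exactly $\{1,\omega,\omega^2\}\times\underline{2}\sqcup S_k'$, with $6+4=10$ elements. The generic side is $\{1,\pm\omega,\pm\omega^2\}\sqcup\underline{1}\sqcup S_K'$, with $5+1+4=10$ elements. Your cardinality check (11 on the left, plus an orbit on the right that does not exist) reflects both slips. Followed literally, your cancellations would leave $\bb{Q}_\ell\oplus\bb{Q}_\ell^{S_K'}\cong\bb{Q}_\ell^{S_k'}$, which is impossible by dimension.

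The fix is that the trivial orbit $(\cal{A}[2]^\circ(\overline{K})\setminus 0)/\tau$ is not cancelled against anything; it is absorbed into the $A_5$ part. As $G_K$-sets, $\{1,\pm\omega,\pm\omega^2\}\sqcup\underline{1}\cong\{1,\omega,\omega^2\}\times\underline{2}$. Indeed, $G_K$ fixes $1$ and acts on each of the pairs $\{\omega,\omega^2\}$ and $\{-\omega,-\omega^2\}$ through its action on $\omega$. Hence both sides are $\underline{2}\sqcup\{\omega,\omega^2\}\sqcup\{\omega,\omega^2\}$.

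With this identification,
\[
S_K\cong\{1,\omega,\omega^2\}\times\underline{2}\sqcup(A[1-\tau^2](\overline{K})\setminus 0)/\tau\times\{\omega,\omega^2\}\sqcup S_K'.
\]
This matches the decomposition of $S_k$ term by term, except for $S_K'$ versus $S_k'$, and semi-simplicity then gives the lemma. This is exactly the paper's proof.
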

\begin{proof}
    Since $S_k\cong \{1, \omega, \omega^2\}\times\underline{2} \sqcup (A[1-\tau^2](\overline{K})\setminus 0)/\tau \times \{\omega, \omega^2\}\sqcup S_k'$, it suffices to show $S_K\cong \{1, \omega, \omega^2\}\times\underline{2} \sqcup (A[1-\tau^2](\overline{K})\setminus 0)/\tau \times \{\omega, \omega^2\}\sqcup S_K'$ by semi-simplicity. Note that the three elements in $\cal{A}[2]^\circ(\overline{K})\setminus 0$ form a single $\tau$-orbit preserved by $G_K$. Thus, we obtain
    \begin{align*}
        S_K & \cong \{1, \pm \omega, \pm \omega^2\} \sqcup (A[1-\tau^2](\overline{K})\setminus 0)/\tau \times \{\omega, \omega^2\} \sqcup (\underline{1}\sqcup (A[2]({\overline{K}})\setminus \cal{A}[2]^\circ(\overline{K}))/\tau)\\
        & \cong \{1, \omega, \omega^2\}\times\underline{2} \sqcup (A[1-\tau^2](\overline{K})\setminus 0)/\tau \times \{\omega, \omega^2\} \sqcup (A[2]({\overline{K}})\setminus \cal{A}[2]^\circ(\overline{K}))/\tau\\
        & = \{1, \omega, \omega^2\}\times\underline{2} \sqcup (A[1-\tau^2](\overline{K})\setminus 0)/\tau \times \{\omega, \omega^2\}\sqcup S_K'.
    \end{align*}
\end{proof}

\begin{proof}[Proof of Theorem \ref{main theorem}(3)]
    By Lemma \ref{order 6 p=2 refinement}, it suffices to show that $\bb{Q}_{\ell}^{S_K'}\cong \bb{Q}_{\ell}^{S_k'}$ as $G_K$-representations if and only if there is a $G_K$-equivariant splitting to \eqref{order 6 p=2 sequence}.

    First, we make the following observations:
    \begin{enumerate}
        \item Since the Galois action commutes with $\tau$, any non-trivial Galois action on $\cal{A}[2](\overline{k})$ fixes $0$ and permutes the $\tau$-orbit $\cal{A}[2](\overline{k})\setminus 0$ cyclically.
        \item Similarly, the statement in (1) holds if we replace $\cal{A}[2](\overline{k})$ by $\cal{A}[2]^\circ(\overline{K})$, and $\cal{A}[2](\overline{k})\setminus 0$ by $\cal{A}[2]^\circ(\overline{K})\setminus 0$.
        \item Using (1) and (2), we see that any non-trivial $G_K$-action on $S_k'$ fixes the $\tau$-orbit $0\times (\cal{A}[2](\overline{k})\setminus 0)$ and permutes the remaining three $\tau$-orbits cyclically.
    \end{enumerate}

    Now we prove the ``only if'' implication. Using the $G_K$-equivariant isomorphism $\bb{Q}_{\ell}^{S_K'}\cong \bb{Q}_{\ell}^{S_k'}$ and observation (3), we see that $G_K$ fixes some element in $S_K'$. In particular, there exists a $\tau$-orbit $\{P,\tau(P),\tau^2(P)\}$ preserved by $G_K$ in $A[2](\overline{K})\setminus \cal{A}[2]^\circ(\overline{K})$. Note that $\tau^2(P)=\tau(P)+P$. Then $\{0, P,\tau(P),\tau^2(P)\}\subset A[2](\overline{K})$ is a subgroup that maps isomorphically onto $\cal{A}[2](\overline{k})$ via $\rho$ and provides a $G_K$-equivariant splitting to \eqref{order 6 p=2 sequence}.

    Conversely, suppose there is a $G_K$-equivariant isomorphism $A[2](\overline{K})\cong \cal{A}[2]^\circ(\overline{K})\times \cal{A}[2](\overline{k})$. Fix an arbitrary $\sigma\in G_K$. It suffices to show that $\sigma$ on $\bb{Q}_{\ell}^{S_K'}$ and $ \bb{Q}_{\ell}^{S_k'}$ has the same trace, i.e. $\sigma$ on $S_K'$ and $S_k'$ fixes the same number of elements. Equivalently, we prove that there is a $\sigma$-equivariant isomorphism $S_K'\cong S_k'$.
    
    By observations (1) and (2), $\sigma^3$ is trivial on $\cal{A}[2]^\circ(\overline{K})$ and $\cal{A}[2](\overline{k})$, therefore trivial on $A[2](\overline{K})$ and $S_K'$. In particular, $\sigma$ has at least one fixed $\tau$-orbit in $A[2]({\overline{K}})\setminus \cal{A}[2]^\circ(\overline{K})$. Let this $\tau$-orbit be $\{P,\tau(P),\tau^2(P)\}$. Similar to the reasoning in the ``only if'' implication, one checks that $\{0, P,\tau(P),\tau^2(P)\}$ provides a $\langle\sigma,\tau\rangle$-equivariant splitting to \eqref{order 6 p=2 sequence}. In particular, there is a $\langle\sigma,\tau\rangle$-equivariant isomorphism
    \[
         A[2](\overline{K})\setminus \cal{A}[2]^\circ(\overline{K})
         \cong \cal{A}[2]^\circ(\overline{K})\times (\cal{A}[2](\overline{k})\setminus 0).
    \]
    After taking the $\tau$-orbits, we obtain a $\sigma$-equivariant isomorphism $S_K'\cong S_k'$.
\end{proof}

\subsection{The order $6$, $p=3$ case}

We now prove Theorem \ref{main theorem}(4). By Proposition \ref{general criterion}, it suffices to prove that there is a $G_K$-equivariant isomorphism $\bb{Q}_\ell^{S_K}\cong \bb{Q}_\ell^{S_k}$ if and only if $\omega\in K$ and there is a $G_K$-equivariant splitting to the short exact sequence
\begin{equation}\label{order 6 p=3 sequence}
    0\to \cal{A}[1-\tau^2]^\circ(\overline{K})\to A[1-\tau^2](\overline{K})\overset{\rho}{\to} \cal{A}[1-\tau^2](\overline{k})\to 0.
\end{equation}

By Proposition \ref{computation summary}, we have $G_K$-equivariant bijections:
\begin{align*}
    S_K & \cong \{1, \pm \omega, \pm \omega^2\} \sqcup (A[1-\tau^2](\overline{K})\setminus 0)/\tau \times \{\omega, \omega^2\} \sqcup (A[2]({\overline{K}})\setminus 0)/\tau,\\
    S_k & \cong \underline{7}\sqcup (D(\cal{A}[1-\tau^2]^\circ)(\overline{k})\times(\cal{A}[1-\tau^2]
    (\overline{k})\setminus 0)
    )/\tau\times \underline{2}\sqcup (A[2](\overline{K})\setminus 0)/\tau.
\end{align*}
Let
\begin{align*}
    S_K' & = (A[1-\tau^2](\overline{K})\setminus \cal{A}[1-\tau^2]^\circ(\overline{K}))/\tau,\\
    S_k' & = (\mathcal{A}[1-\tau^2]^\circ(\overline{K})\times(\cal{A}[1-\tau^2]
    (\overline{k})\setminus 0)
    )/\tau.
\end{align*}

\begin{lem}\label{order 6 p=3 refinement}
    The following are equivalent:
    \begin{enumerate}
        \item there is a $G_K$-equivariant isomorphism $\bb{Q}_\ell^{S_K}\cong \bb{Q}_\ell^{S_k}$;
        \item $\omega\in K$ and there is a $G_K$-equivariant isomorphism $\bb{Q}_\ell^{S_K'}\cong \bb{Q}_\ell^{S_k'}$.
    \end{enumerate}
\end{lem}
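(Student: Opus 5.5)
We follow the pattern of Lemma \ref{order 3 refinement} and Lemma \ref{order 4 refinement}: first use a fixed-point count to force $\omega\in K$, and then cancel common permutation summands using semi-simplicity.

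\emph{Step 1: reducing to $\omega\in K$.} Condition (2) includes $\omega\in K$, so it suffices to derive it from (1). Suppose $\omega\notin K$ and pick $\sigma\in G_K$ with $\sigma(\omega)=\omega^2$. An isomorphism in (1) forces $\sigma$ to fix the same number of points of $S_K$ and of $S_k$.
\begin{itemize}
\item On $S_K$, the subset $\{1,\pm\omega,\pm\omega^2\}$ contributes exactly one fixed point.
\item On $S_K$, the factor $(A[1-\tau^2](\overline{K})\setminus 0)/\tau\times\{\omega,\omega^2\}$ contributes none, since $\sigma$ swaps $\omega$ and $\omega^2$.
\item On $S_k$, the summand $\underline{7}$ alone contributes seven fixed points.
\end{itemize}
The common summand $(A[2](\overline{K})\setminus 0)/\tau$ appears identically in $S_K$ and $S_k$. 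The remaining summand of $S_k$, namely $(D(\cal{A}[1-\tau^2]^\circ)(\overline{k})\times(\cal{A}[1-\tau^2](\overline{k})\setminus 0))/\tau\times\underline{2}$, contributes a nonnegative number. So $\sigma$ fixes at most $1+(\text{common})$ points of $S_K$ and at least $7+(\text{common})$ points of $S_k$, a contradiction.

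\emph{Step 2: matching the pieces once $\omega\in K$.} Assume $\omega\in K$. By Corollary \ref{group scheme embedding special case}(2), $D(\cal{A}[1-\tau^2]^\circ)(\overline{k})\cong\cal{A}[1-\tau^2]^\circ(\overline{K})$ as $G_K$-sets. One checks this identification is compatible with $\tau$: $\tau$ acts on this order-$3$ group by $\pm1$, and $\tau^2$ acts trivially on both sides. It follows that the middle summand of $S_k$ is $S_k'\times\underline{2}$.

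On the generic side, $\{1,\pm\omega,\pm\omega^2\}$ becomes $\underline{6}$, which is not quite $\underline{7}$. We correct this by splitting $A[1-\tau^2](\overline{K})\setminus 0$ as
\[
(\cal{A}[1-\tau^2]^\circ(\overline{K})\setminus 0)\sqcup(A[1-\tau^2](\overline{K})\setminus\cal{A}[1-\tau^2]^\circ(\overline{K})).
\]
The first piece has two elements forming a single $\tau$-orbit, because $\tau^3=-1$ acts as $-1$ on the order-$3$ group. That orbit is $G_K$-stable, so after multiplying by $\{\omega,\omega^2\}\cong\underline{2}$ it contributes $\underline{2}$. Hence
\[
S_K\cong\underline{8}\sqcup S_K'\times\underline{2}\sqcup(A[2](\overline{K})\setminus0)/\tau.
\]
Here $(A[1-\tau^2](\overline{K})\setminus\cal{A}[1-\tau^2]^\circ(\overline{K}))/\tau\times\{\omega,\omega^2\}$ is identified with $S_K'\times\underline{2}$ because $\omega\in K$.

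This gives $\underline{8}$ on the generic side against $\underline{7}$ on the special side, so the bookkeeping has to be redone. The special-side middle term is built from $\cal{A}[1-\tau^2]^\circ\times(\cal{A}[1-\tau^2](\overline{k})\setminus 0)$, which has $3\cdot2=6$ elements in three $\tau$-orbits. The generic set $A[1-\tau^2](\overline{K})\setminus\cal{A}[1-\tau^2]^\circ(\overline{K})$ also has $6$ elements in three orbits. So $|S_K'|=|S_k'|=3$, and the cardinalities are $|S_K|=6+2\cdot4+5=19$ and $|S_k|=7+2\cdot3+5=18$. One of the two counts must therefore be off by one. The likely fix is that the fixed orbit $\{0\}\times(\cal{A}[1-\tau^2](\overline{k})\setminus0)$ is really part of a larger $E_6$ configuration, so the two summands should be compared as $\underline{7}+S_k'\times\underline{2}$ against $\underline{8}+S_K'\times\underline{2}$ up to this constant term.

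The main obstacle is exactly this constant-term bookkeeping: reconciling the trivial summands on the two sides, presumably by making the $E_6$ indexing precise. Once the trivial summands agree, semi-simplicity of permutation representations and cancellation give
\[
\bb{Q}_\ell^{S_K'\times\underline{2}}\cong\bb{Q}_\ell^{S_k'\times\underline{2}},
\]
which is equivalent to $\bb{Q}_\ell^{S_K'}\cong\bb{Q}_\ell^{S_k'}$ (compare characters, which simply double). This yields the equivalence of (1) and (2).
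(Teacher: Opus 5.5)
Your Step 1 and your overall strategy (a fixed-point count to force $\omega\in K$, then Corollary \ref{group scheme embedding special case} and cancellation of common permutation summands) match the paper's proof. However, the proposal does not close. The step you call the main obstacle is left open, and the repair you suggest is wrong.

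The cause is a miscount. The set $\{1,\pm\omega,\pm\omega^2\}=\{1,\omega,-\omega,\omega^2,-\omega^2\}$ has \emph{five} elements, not six: it indexes the five curves of the $A_5$ chain over $q(O)$. With $\omega\in K$ this set is $\underline{5}$. The single $G_K$-stable $\tau$-orbit $\cal{A}[1-\tau^2]^\circ(\overline{K})\setminus 0$, times $\{\omega,\omega^2\}$, contributes $\underline{2}$. Together they give exactly $\underline{7}$, matching the $E_7^1$ summand on the special side. Accordingly
\[
\lvert S_K\rvert=5+2\cdot 4+5=18=7+2\cdot 3+5=\lvert S_k\rvert,
\]
as it must be, since $A_5+4A_2+5A_1$ and $E_7+E_6+5A_1$ both have $18$ components. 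A genuine $\underline{8}$ versus $\underline{7}$ mismatch would make (1) impossible for dimension reasons, so it could never be absorbed ``up to a constant term''. It was a sign of an arithmetic slip, not of a subtle indexing issue. Your proposed fix, that the orbit $0\times(\cal{A}[1-\tau^2](\overline{k})\setminus 0)$ is ``part of a larger $E_6$'', is neither correct nor needed. The set $S_k'\times\underline{2}$ already has six elements and is the whole $E_6$ configuration, and that orbit is just one of the three elements of $S_k'$.

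With the count corrected you get
\[
S_K\cong\underline{7}\sqcup S_K'\times\underline{2}\sqcup(A[2](\overline{K})\setminus 0)/\tau,\qquad S_k\cong\underline{7}\sqcup S_k'\times\underline{2}\sqcup(A[2](\overline{K})\setminus 0)/\tau.
\]
Your remaining steps then go through verbatim and reproduce the paper's argument: cancel the common summands by semisimplicity, then remove the multiplicity $2$ by comparing characters. One point is to your credit: you check that the identification in Corollary \ref{group scheme embedding special case}(2) is $\tau$-compatible, since $\tau$ acts by $-1$ on both order-$3$ groups. The paper leaves this detail implicit.
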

\begin{proof}
    First, we may assume $\omega\in K$: otherwise counting the possible fixed points of $G_K$ on $S_K$ and $S_k$ gives a contradiction as in the proof of Lemma \ref{order 3 refinement}.
    Then by Corollary \ref{group scheme embedding special case}, $D(\cal{A}[1-\tau^2]^\circ)(\overline{k})\cong \cal{A}[1-\tau^2]^\circ(\overline{K})$. Therefore,
    \[
        S_k\cong \underline{7}\sqcup (A[2](\overline{K})\setminus 0)/\tau\sqcup S_k'\times \underline{2}.
    \]
    By semi-simplicity, it suffices to prove $S_K\cong \underline{7}\sqcup S_K'\times \underline{2}\sqcup (A[2](\overline{K})\setminus 0)/\tau$.
    
    Note that the two elements in $\cal{A}[1-\tau^2]^\circ(\overline{K})\setminus 0$ form a single $\tau$-orbit preserved by $G_K$. Together with the assumption $\omega\in K$, this gives
    \begin{align*}
        S_K & \cong \underline{5} \sqcup (A[1-\tau^2](\overline{K})\setminus 0)/\tau \times \underline{2} \sqcup (A[2]({\overline{K}})\setminus 0)/\tau\\
        & \cong \underline{7}\sqcup(A[1-\tau^2](\overline{K})\setminus \cal{A}[1-\tau^2]^\circ(\overline{K}))/\tau\times\underline{2} \sqcup (A[2]({\overline{K}})\setminus 0)/\tau\\
        & = \underline{7}\sqcup (A[2](\overline{K})\setminus 0)/\tau\sqcup S_K'\times\underline{2}.
    \end{align*}
\end{proof}

\begin{proof}[Proof of Theorem \ref{main theorem}(4)]
    By Lemma \ref{order 6 p=3 refinement}, it suffices to prove that $\bb{Q}_{\ell}^{S_K'}\cong \bb{Q}_{\ell}^{S_k'}$ as $G_K$-representations if and only if there is a $G_K$-equivariant splitting to \eqref{order 6 p=3 sequence}.

    First, we claim that $\tau$ acts as $-1$ on $A[1-\tau^2](\overline{K})$. This follows since $A[1-\tau^2](\overline{K})$ is a vector space of dimension $2$ over $\bb{F}_3$, and $\tau$ on $A[1-\tau^2](\overline{K})$ is an automorphism of order $2$ that only fixes the origin.

    We now prove the ``only if'' direction. Note that $0\times(\cal{A}[1-\tau^2](\overline{k})\setminus 0)$ gives an element in $S_k'$ fixed by $G_K$. Thus, the only non-trivial $G_K$-action on $S_k'$ exchanges the remaining two elements. Then using the isomorphism $\bb{Q}_{\ell}^{S_K'}\cong \bb{Q}_{\ell}^{S_k'}$, we see that $G_K$ fixes some element in $S_K'$ and there exists a $\tau$-orbit $\{P,\tau(P)\}$ preserved by $G_K$ in $A[1-\tau^2](\overline{K})\setminus \cal{A}[1-\tau^2]^\circ(\overline{K})$. Then $\{0, P,\tau(P)\}$ provides a $G_K$-equivariant splitting to \eqref{order 6 p=3 sequence}.

    Conversely, fix a $G_K$-equivariant isomorphism $A[1-\tau^2](\overline{K})\cong \cal{A}[1-\tau^2]^\circ(\overline{K})\times \cal{A}[1-\tau^2](\overline{k})$. This is also $\tau$-equivariant since $\tau$ acts as $-1$ on $A[1-\tau^2](\overline{K})$. Then we obtain the following $G_K\times\langle\tau\rangle$-equivariant isomorphisms:
    \begin{align*}
        & A[1-\tau^2](\overline{K})\setminus \mathcal{A}[1-\tau^2]^\circ(\overline{K})\\
        \cong{} & (\mathcal{A}[1-\tau^2]^\circ(\overline{K})\times \mathcal{A}[1-\tau^2](\overline{k}))\setminus (\mathcal{A}[1-\tau^2]^\circ(\overline{K})\times 0)\\
        ={} & (\mathcal{A}[1-\tau^2]^\circ(\overline{K})\times (\mathcal{A}[1-\tau^2](\overline{k})\setminus 0))
    \end{align*}
    In particular, taking the $\tau$-orbits yields a $G_K$-equivariant bijection $S_K'\cong S_k'$.
\end{proof}

This finishes the proof of Theorem \ref{main theorem}. 

\begin{rem}
    Let $A$ be an abelian surface over $K$ with ordinary good reduction. By \cite[Remark 6.5]{non-supersingular-kummer}, if $\Kum(A)$ has good reduction, its reduction is also ordinary.

    We extend the argument in \cite[Remark 6.5]{non-supersingular-kummer} to generalized Kummer surfaces. If $X=\Kum(A,\tau)$ has good reduction, then its reduction is $X^\circ=\Kum(\cal{A}_k,\tau)$ by Lemma \ref{find the caonical reduction}. We want to show that $X^\circ$ is an ordinary K3 surface.

    Let $S_k$ be the set of exceptional divisors of $X^\circ_{\overline{k}}\to \cal{A}_{\overline{k}}/\tau$. Let $W=W(\overline{k})$ be the ring of Witt vectors of $\overline{k}$. To show that $X^\circ$ is ordinary, we need to prove that $H^2_{\cris}(X^\circ_{\overline{k}}/W)_{\bb{Q}}$ has a non-zero slope $0$ part.
    
    We have a formula of crystalline cohomology similar to Proposition \ref{cohomology computation}:
    \[
        H^2_{\cris}(X^\circ_{\overline{k}}/W)_{\bb{Q}}\cong H^2_{\cris}(\cal{A}_{\overline{k}}/W)_{\bb{Q}}^{\tau=\text{id}}\oplus W^{S_k}(-1)_{\bb{Q}}.
    \]
    By \cite[Theorem II.5.2]{illusie_deRham_Witt}, the slope-$0$ summand $(H^1_{\cris}(\cal{A}_{\overline{k}}/W)_{\bb{Q}})_{[0]}$ is the dual of the $p$-adic Tate module $V_p(\cal{A}_k)$ of $\cal{A}_k$. By \cite[Proposition 2.3.2]{Alvaro_thesis}, the minimal polynomial of $\tau$ on $V_p(\cal{A}_k)$ and thus on $(H^1_{\cris}(\cal{A}_{\overline{k}}/W)_{\bb{Q}})_{[0]}$ is a cyclotomic polynomial. In particular, $\tau$ acts on $(H^1_{\cris}(\cal{A}_{\overline{k}}/W)_{\bb{Q}})_{[0]}$ as a $2\times 2$-matrix with determinant $1$, and thus trivially on $\wedge^2 (H^1_{\cris}(\cal{A}_{\overline{k}}/W)_{\bb{Q}})_{[0]}\subset H^2_{\cris}(\cal{A}_{\overline{k}}/W)_{\bb{Q}}$. This gives a non-zero subspace of slope $0$ of $H^2_{\cris}(X^\circ_{\overline{k}}/W)_{\bb{Q}}$.
\end{rem}

\begin{rem}
    One might notice that the criterion for the good reduction of $X=\Kum(A,\tau)$ and that of $Y=\Kum(A)$ are the same if $p=2$ and $\tau$ has order $6$. In this remark, we justify this phenomenon by proving directly that the good reduction of $Y$ implies that of $X$.

    Since $\tau^3=-1$, it follows that $\tau$ descends to $A/\tau^3$ and lifts to $Y$. In particular, $\tau$ on $Y$ has order $3$ and is tame. Also, $X$ is the minimal desingularization of $Y/\tau$.

    We claim that $\tau$ on $Y$ is symplectic. Let $U\subset A$ be the complement of $A[2]$, and let $q:A\to A/\tau^3$ be the quotient map. Then $V=q(U)\subset A/\tau^3$ is smooth, and $q|_U$ is finite \'etale. Thus, we obtain
    \[
        H^0(V,\omega_V)= H^0(U,\omega_U)^{\tau^3=\text{id}}=H^0(U,\omega_U).
    \]
    Via the minimal desingularization $Y\to A/\tau^3$, we view $V$ as a dense open subscheme of $Y$, so the restriction $H^0(Y,\omega_Y)\to H^0(V,\omega_V)=H^0(U,\omega_U)$ is injective. This shows that $\tau$ acts trivially on $H^0(Y,\omega_Y)$.
    
    Suppose $Y$ has good reduction. Since $\tau$ on $Y$ is tame and symplectic, by \cite[Corollary 1.9]{extendability_paper}, $Y$ has a smooth model $\cal{Y}$ such that $\tau$ extends to $\cal{Y}$. Take the quotient $\cal{Y}/\tau$. Again by tameness, the singularities on its generic fibre $Y/\tau$ and special fibre $\cal{Y}_k/\tau$ are the same. Then we may desingularize $\cal{Y}/\tau$ to get a smooth model of $X$.

    Similarly, when $\tau$ has order $6$ and $p=3$, $\Kum(A,\tau)$ has good reduction if and only if $\Kum(A,\tau^2)$ has good reduction. As in the $p=2$ case, we can prove directly that the good reduction of $\Kum(A,\tau^2)$ implies that of $\Kum(A,\tau)$.
\end{rem}

\begin{rem}
    The method we use to prove Theorem \ref{main theorem} does not adapt to $A$ with supersingular good reduction. In that case, one can show that not all singularities on $\cal{A}_{\overline{k}}/\tau$ are rational double points, so we cannot apply \cite{artin_simultaneous_res} to prove Theorem \ref{potential good redn}, and our proof of Theorem \ref{main theorem} breaks down.
\end{rem}

\section{Examples}

Now we apply Theorem \ref{main theorem} to some examples of generalized Kummer surfaces.

\subsection{Examples of generalized Kummer surfaces and their good reduction}\label{toy examples}

Let $E$ be an elliptic curve over $K$ with ordinary good reduction. Let $A=E\times E$, which is an abelian surface over $K$ also with ordinary good reduction. Let $\cal{E}$ and $\cal{A}$ be the N\'eron models of $E$ and $A$ respectively. Note that $\cal{A}=\cal{E}\times \cal{E}$.

In this sub-section, for each $n\in \{2,3,4,6\}$, we give an example of $\tau$ on $A$ of order $n$, such that the minimal desingularization $\Kum(A,\tau)$ of $A/\tau$ is a K3 surface. Then we apply Theorem \ref{main theorem} to obtain a criterion for the good reduction of $\Kum(A,\tau)$ in the wild case.

\subsubsection{The order $2$ case}

Let $\tau=-1$. Consider the usual Kummer surface $\Kum(A)=\Kum(A,\tau)$.

\begin{prop}\label{good reduction of order 2 example}
    Suppose $p=2$. Then $\Kum(A)$ has good reduction if and only if $E[2](\overline{K})$ is trivial.
\end{prop}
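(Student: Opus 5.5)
Since $A=E\times E$ has ordinary good reduction, I will apply Theorem \ref{theorem usual kummer}(1): $\Kum(A)$ has good reduction if and only if
\[
    0\to \cal{A}[2]^\circ(\overline{K})\to A[2](\overline{K})\to \cal{A}[2](\overline{k})\to 0
\]
admits a $G_K$-equivariant splitting. Because $\cal{A}=\cal{E}\times\cal{E}$ and the connected-\'etale sequence is compatible with products, this sequence is the direct sum of two copies of
\[
    0\to \cal{E}[2]^\circ(\overline{K})\to E[2](\overline{K})\to \cal{E}[2](\overline{k})\to 0. \qquad (\ast)
\]
Each term of $(\ast)$ has order $2$, so both end terms are the trivial $G_K$-module $\bb{F}_2$. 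The task is therefore to show that the doubled sequence splits $G_K$-equivariantly if and only if $E[2](\overline{K})$ is trivial, i.e.\ $G_K$ acts trivially on $E[2](\overline{K})$.

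For the ``if'' direction, assume $G_K$ acts trivially on $E[2](\overline{K})$. Then every section of $(\ast)$ is $G_K$-equivariant, and taking the direct sum of two such sections splits the sequence for $A$.

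For the ``only if'' direction, suppose $s:\cal{A}[2](\overline{k})\to A[2](\overline{K})$ is a $G_K$-equivariant section. The source is a trivial $G_K$-module, so the image $s(\cal{A}[2](\overline{k}))$ is a subgroup of order $4$ fixed pointwise by $G_K$. I need to conclude that all of $A[2](\overline{K})$ is fixed. Write $E[2](\overline{K})=\{0,a,b,a+b\}$ with $a$ the nonzero element of $\cal{E}[2]^\circ(\overline{K})$, which is $G_K$-fixed. Each $\sigma\in G_K$ either fixes $b$ or sends $b\mapsto a+b$. Let $\sigma$ act on $A[2](\overline{K})=E[2]\oplus E[2]$ diagonally by the same rule. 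The lifts of the basis vectors $(1,0),(0,1)$ of $\cal{A}[2](\overline{k})=\bb{F}_2^2$ have the form $(b+\epsilon_1 a,\epsilon_2 a)$ and $(\epsilon_3 a, b+\epsilon_4 a)$ respectively. If $\sigma$ moves $b$, then it changes the first coordinate of the first lift, so that lift is not fixed, a contradiction. Hence every $\sigma$ fixes $b$, and $E[2](\overline{K})$ is trivial.

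The only real subtlety is making sure that the product decomposition of the connected-\'etale sequence for $\cal{E}\times\cal{E}$ is compatible with the reduction map $\rho$; this holds by functoriality. I expect this step to be straightforward.
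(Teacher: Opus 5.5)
Your proposal is correct and follows the paper's proof: you apply Theorem~\ref{theorem usual kummer}(1), decompose along $\cal{A}[2]=\cal{E}[2]\times\cal{E}[2]$, and use that the two end terms have order $2$ and therefore trivial $G_K$-action. Your explicit check that a $G_K$-equivariant section of the doubled sequence forces every $\sigma$ to fix $b$ justifies the step the paper simply asserts as ``this is equivalent to''; note one small slip, that the middle term $E[2](\overline{K})$ of $(\ast)$ has order $4$, not $2$.
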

\begin{proof}
    By Theorem \ref{theorem usual kummer}, $\Kum(A)$ has good reduction if and only if there is a $G_K$-equivariant splitting to
    \[
        0\to \cal{A}[2]^\circ(\overline{K})\to A[2](\overline{K})\to \cal{A}[2](\overline{k})\to 0.
    \]
    Since $\cal{A}[2]=\cal{E}[2]\times \cal{E}[2]$, this is equivalent to a $G_K$-equivariant splitting to
    \begin{equation}\label{order 2 example sequence}
        0\to \cal{E}[2]^\circ(\overline{K})\to E[2](\overline{K})\to \cal{E}[2](\overline{k})\to 0.
    \end{equation}
    Note that $\cal{E}[2]^\circ(\overline{K})$ and $\cal{E}[2](\overline{k})$ both have order $2$. Therefore, $G_K$ acts trivially on $\cal{E}[2]^\circ(\overline{K})$ and $\cal{E}[2](\overline{k})$. Then \eqref{order 2 example sequence} splits if and only if $E[2](\overline{K})$ is trivial.
\end{proof}

\subsubsection{The order $3$ case}

This example is from \cite[Section 7.2]{auto_kummer_surface_kondo}. Let $\tau:E\times E\to E\times E$ defined by
\[
    (P,Q)\mapsto (-P-Q,P).
\]
This is an automorphism of order $3$ and the minimal desingularization of $A/\tau$ is indeed a K3 surface $\Kum(A,\tau)$ by \cite[Section 7.2]{auto_kummer_surface_kondo}.

\begin{prop}\label{good reduction of order 3 example}
    Suppose $p=3$. Then $\Kum(A,\tau)$ has good reduction if and only if $\omega\in K$ and there is a $G_K$-equivariant splitting to
    \[
        0\to \cal{E}[3]^\circ(\overline{K})\to E[3](\overline{K})\to \cal{E}[3](\overline{k})\to 0.
    \]
\end{prop}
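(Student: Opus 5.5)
The plan is to deduce the statement directly from Theorem \ref{main theorem}(1), applied to $A=E\times E$ with the given $\tau$. We have $p=3$ and $\tau$ of order $3$. $A$ has ordinary good reduction because $E$ does, and $\Kum(A,\tau)$ is a K3 surface by \cite[Section 7.2]{auto_kummer_surface_kondo}. So Theorem \ref{main theorem}(1) says that $\Kum(A,\tau)$ has good reduction if and only if $\omega\in K$ and the sequence
\[
0\to \cal{A}[1-\tau]^\circ(\overline{K})\to A[1-\tau](\overline{K})\to \cal{A}[1-\tau](\overline{k})\to 0
\]
splits $G_K$-equivariantly. It therefore suffices to identify this sequence, $G_K$-equivariantly, with the connected-\'etale sequence \eqref{variation of connected etale sequence} for $\cal{E}[3]$.

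The key step is to show that the diagonal morphism $\Delta:\cal{E}\to\cal{E}\times\cal{E}=\cal{A}$ restricts to an isomorphism of finite flat group schemes $\cal{E}[3]\xrightarrow{\sim}\cal{A}[1-\tau]$ over $\cal{O}_K$. On points with values in any $\cal{O}_K$-scheme $T$, the map $1-\tau$ sends $(P,Q)\mapsto(2P+Q,\,Q-P)$. Its kernel is therefore cut out by the equations $Q=P$ and $3P=0$, so as a subfunctor of $\cal{A}$ it is exactly $\Delta(\cal{E}[3])$. Since $\Delta$ is a closed immersion and a homomorphism, this gives the isomorphism $\cal{E}[3]\cong\cal{A}[1-\tau]$ of group schemes, not merely of $\overline{K}$-points. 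This is the only place where some care is needed: one must check that the identification holds scheme-theoretically over $\cal{O}_K$, and the functor-of-points description handles this. As a consistency check, both group schemes have degree $9$, in agreement with the table in Section \ref{fixed points and degrees}.

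An isomorphism of finite flat group schemes over $\cal{O}_K$ respects connected-\'etale sequences. Hence $\Delta$ induces isomorphisms $\cal{E}[3]^\circ\cong\cal{A}[1-\tau]^\circ$ and $\cal{E}[3]^{\et}\cong\cal{A}[1-\tau]^{\et}$. Taking $\overline{K}$-points, and using the description of \eqref{variation of connected etale sequence} via the reduction map $\rho$, we obtain a $G_K$-equivariant isomorphism of short exact sequences
\[
\bigl(0\to \cal{E}[3]^\circ(\overline{K})\to E[3](\overline{K})\to \cal{E}[3](\overline{k})\to 0\bigr)\cong\bigl(0\to \cal{A}[1-\tau]^\circ(\overline{K})\to A[1-\tau](\overline{K})\to \cal{A}[1-\tau](\overline{k})\to 0\bigr).
\]
Compatibility with $\rho$ holds because $\Delta$ commutes with reduction. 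Consequently one sequence admits a $G_K$-equivariant splitting if and only if the other does. Combined with the condition $\omega\in K$ carried over unchanged from Theorem \ref{main theorem}(1), this proves the proposition.
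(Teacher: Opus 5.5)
Your proposal is correct and takes the same approach as the paper. The paper applies Theorem~\ref{main theorem}(1) to $A=E\times E$ and asserts $\cal{A}[1-\tau]\cong\cal{E}[3]$ ``by calculation''; your computation $(1-\tau)(P,Q)=(2P+Q,\,Q-P)$, showing the kernel is $\Delta(\cal{E}[3])$ scheme-theoretically over $\cal{O}_K$, is exactly that omitted calculation, and functoriality of the connected-\'etale sequence then transfers the splitting condition.
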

\begin{proof}
    By Theorem \ref{main theorem}, $\Kum(A,\tau)$ has good reduction if and only if $\omega\in K$ and there is a $G_K$-equivariant splitting to
    \[
        0\to \cal{A}[1-\tau]^\circ(\overline{K})\to A[1-\tau](\overline{K})\to \cal{A}[1-\tau](\overline{k})\to 0.
    \]
    By calculation, $\cal{A}[1-\tau]\cong \cal{E}[3]$. This finishes the proof.
\end{proof}

\subsubsection{The order $4$ case}

This example is from \cite[Section 3.2.3]{Alvaro_thesis}. Let $\tau:E\times E\to E\times E$ defined by
\[
    (P,Q)\mapsto (-Q,P).
\]
This is an automorphism of order $4$ and the minimal desingularization of $A/\tau$ is indeed a K3 surface $\Kum(A,\tau)$ by \cite[Section 3.2.3]{Alvaro_thesis}.

\begin{prop}\label{good reduction of order 4 example}
    Suppose $p=2$. Then $\Kum(A,\tau)$ has good reduction if and only if $i\in K$ and $E[2](\overline{K})$ is trivial.
\end{prop}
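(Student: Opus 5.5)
By Theorem \ref{main theorem}(2), $\Kum(A,\tau)$ has good reduction if and only if three conditions hold: $i\in K$, $A[1-\tau](\overline{K})$ is trivial, and the sequence \eqref{order 4 sequence} admits a $G_K\times\langle\tau\rangle$-equivariant splitting. So the plan is to translate each condition into a statement about $E$ and show the conjunction is equivalent to $i\in K$ together with $E[2](\overline{K})$ trivial.

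\textbf{Step 1: compute $A[1-\tau]$.} For $\tau(P,Q)=(-Q,P)$ we have $(1-\tau)(P,Q)=(P+Q,\,Q-P)$. Its kernel is $\{(P,-P): 2P=0\}=\{(P,P):P\in E[2]\}$. Hence $A[1-\tau](\overline{K})\cong E[2](\overline{K})$ as $G_K$-modules, via the diagonal embedding. So the condition ``$A[1-\tau](\overline{K})$ trivial'' is exactly ``$E[2](\overline{K})$ trivial''. This already gives the ``only if'' direction.

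\textbf{Step 2: the ``if'' direction.} Assume $i\in K$ and $G_K$ acts trivially on $E[2](\overline{K})$; then it acts trivially on $A[2](\overline{K})=E[2]^2$. We need a $\tau$-stable subgroup $H\subset A[2](\overline{K})$ mapping isomorphically onto $\cal{A}[2](\overline{k})=\cal{E}[2](\overline{k})^2$. Since the Galois action is trivial, $G_K$-equivariance is automatic.

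Write $E[2](\overline{K})=\{0,a,b,a+b\}$ with $\cal{E}[2]^\circ(\overline{K})=\{0,a\}$, so $\rho(b)$ generates $\cal{E}[2](\overline{k})$. On $2$-torsion, $\tau$ acts by $(P,Q)\mapsto(Q,P)$. Take $H=\{0,(b,0),(0,b),(b,b)\}$. This subgroup is swap-stable, and $\rho$ maps it isomorphically onto $\cal{E}[2](\overline{k})^2$, which gives the desired splitting.

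\textbf{Main obstacle.} The only real care needed is the explicit identification of $A[1-\tau]$ and the check that $\tau$ acts on $A[2]$ as the swap. Both are direct computations. Everything else is the formal reduction provided by Theorem \ref{main theorem}(2).
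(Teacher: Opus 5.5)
Your proposal is correct and follows essentially the same route as the paper: reduce via Theorem \ref{main theorem}(2), identify $A[1-\tau](\overline{K})\cong E[2](\overline{K})$ so that triviality of the Galois action transfers, and for the converse use the trivial $G_K$-action on $A[2](\overline{K})=E[2](\overline{K})^2$ together with the explicit $\tau$-stable section spanned by $(b,0),(0,b)$, where $b\notin\cal{E}[2]^\circ(\overline{K})$. You also check explicitly that $\tau$ acts on $A[2]$ as the coordinate swap, which the paper leaves implicit.
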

\begin{proof}
    By Theorem \ref{main theorem}, $\Kum(A,\tau)$ has good reduction if and only if $i\in K$, $A[1-\tau](\overline{K})$ is trivial, and there is a $G_K\times\langle\tau\rangle$-equivariant splitting to
    \begin{equation}\label{order 4 example sequence}
        0\to \cal{A}[2]^\circ(\overline{K})\to A[2](\overline{K})\to \cal{A}[2](\overline{k})\to 0.
    \end{equation}
    Note that $\cal{A}[1-\tau]\cong \cal{E}[2]$ and $\cal{A}[2]=\cal{E}[2]\times \cal{E}[2]$. Thus, it suffices to prove that \eqref{order 4 example sequence} splits $G_K\times\langle\tau\rangle$-equivariantly if $E[2](\overline{K})$ is trivial. Suppose $E[2](\overline{K})$ is trivial. Then $A[2](\overline{K})=E[2](\overline{K})\times E[2](\overline{K})$ is trivial. Therefore, we only need to find a $\tau$-equivariant splitting to \eqref{order 4 example sequence}.
    
    We do this explicitly. Let $a\in \cal{E}^\circ[2](\overline{K})$ be the non-zero element. Pick any $b\in E[2](\overline{K})\setminus \cal{E}^\circ[2](\overline{K})$ such that $a,b$ form a basis of $E[2](\overline{K})$ over $\bb{F}_2$. Then $a_1=(a,0), a_2=(0,a), b_1=(b,0), b_2=(0,b)$ form a basis of $A[2](\overline{K})$. Also, $a_1, a_2$ form a basis of $\cal{A}[2]^\circ(\overline{K})$. Then $\langle b_1,b_2\rangle$ provides a $\tau$-equivariant section to \eqref{order 4 example sequence}.
\end{proof}

\subsubsection{The order $6$ case}

This example is from \cite[Section 3.2.4]{Alvaro_thesis}. Let $\tau:E\times E\to E\times E$ defined by
\[
    (P,Q)\mapsto (P+Q,-P).
\]
This is an automorphism of order $6$ and the minimal desingularization of $A/\tau$ is indeed a K3 surface $\Kum(A,\tau)$ by \cite[Section 3.2.4]{Alvaro_thesis}.

\begin{prop}\label{good reduction of order 6, p=2 example}
    Suppose $p=2$. Then $\Kum(A,\tau)$ has good reduction if and only if $E[2](\overline{K})$ is trivial.
\end{prop}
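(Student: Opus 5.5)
By Theorem \ref{main theorem}(3), $\Kum(A,\tau)$ has good reduction if and only if the sequence
\[
    0\to \cal{A}[2]^\circ(\overline{K})\to A[2](\overline{K})\to \cal{A}[2](\overline{k})\to 0
\]
splits $G_K$-equivariantly. So the plan is to show that this splitting exists if and only if $E[2](\overline{K})$ is trivial, i.e.\ $G_K$ acts trivially on $E[2](\overline{K})$. The argument is then exactly that of Proposition \ref{good reduction of order 2 example}.

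The key observation is that the sequence for $\cal{A}[2]=\cal{E}[2]\times\cal{E}[2]$ is the direct sum of two copies of the corresponding sequence for $\cal{E}$:
\[
    0\to \cal{E}[2]^\circ(\overline{K})\to E[2](\overline{K})\to \cal{E}[2](\overline{k})\to 0.
\]
This decomposition respects the $G_K$-action. It is not compatible with $\tau$, but condition (3) of Theorem \ref{main theorem} only asks for $G_K$-equivariance, so $\tau$ plays no role.

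If the $E$-sequence splits $G_K$-equivariantly, the direct sum of its splittings splits the $A$-sequence. Conversely, suppose $s:\cal{A}[2](\overline{k})\to A[2](\overline{K})$ is a $G_K$-equivariant section. Compose it with the first-factor inclusion $\cal{E}[2](\overline{k})\to\cal{A}[2](\overline{k})$ and the first projection $A[2](\overline{K})\to E[2](\overline{K})$. The reduction map $\rho$ is compatible with these product structure maps, so the composite is a $G_K$-equivariant section of the $E$-sequence. Hence the $A$-sequence splits if and only if the $E$-sequence does.

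For the $E$-sequence, $E$ has ordinary good reduction, so $\cal{E}[2]^\circ(\overline{K})$ and $\cal{E}[2](\overline{k})$ each have order $2$, and $G_K$ acts trivially on both. Any $G_K$-equivariant splitting therefore identifies $E[2](\overline{K})$ $G_K$-equivariantly with a product of two trivial modules, so $G_K$ acts trivially. Conversely, if $G_K$ acts trivially on $E[2](\overline{K})$, any set-theoretic section is automatically $G_K$-equivariant.

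I expect no genuine obstacle. The only point needing care is the converse direction in the second paragraph: checking that projecting a section of the product sequence onto one factor yields a section, which rests on the fact that $\rho$ commutes with the product decomposition.
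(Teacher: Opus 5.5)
Your proof is correct and is essentially the paper's. The paper observes that the criterion in Theorem \ref{main theorem}(3) coincides with the Lazda--Skorobogatov criterion for $\Kum(A)$ and then cites Proposition \ref{good reduction of order 2 example}, whose proof is exactly the product decomposition and order-two argument you carry out inline; you also justify the converse direction of the reduction to the $E$-sequence, which the paper only asserts. One small wording fix: in the last step you need a homomorphic section of $E[2](\overline{K})\to\cal{E}[2](\overline{k})$ rather than an arbitrary set-theoretic one, but such a section obviously exists and is automatically $G_K$-equivariant when $G_K$ acts trivially.
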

\begin{proof}
    By Theorem \ref{main theorem}, $\Kum(A,\tau)$ has good reduction if and only if $\Kum(A)$ has good reduction. Then this follows from Proposition \ref{good reduction of order 2 example}.
\end{proof}

\begin{prop}\label{good reduction of order 6, p=3 example}
    Suppose $p=3$. Then $\Kum(A,\tau)$ has good reduction if and only if $\omega\in K$ and there is a $G_K$-equivariant splitting to
    \[
        0\to \cal{E}[3]^\circ(\overline{K})\to E[3](\overline{K})\to \cal{E}[3](\overline{k})\to 0.
    \]
\end{prop}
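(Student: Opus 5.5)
We apply Theorem \ref{main theorem}(4): $\Kum(A,\tau)$ has good reduction if and only if $\omega\in K$ and the sequence
\[
    0\to \cal{A}[1-\tau^2]^\circ(\overline{K})\to A[1-\tau^2](\overline{K})\to \cal{A}[1-\tau^2](\overline{k})\to 0
\]
splits $G_K$-equivariantly. It then suffices to identify the finite flat group scheme $\cal{A}[1-\tau^2]$ with $\cal{E}[3]$, compatibly with the connected-\'etale sequences and with $G_K$. This mirrors the proof of Proposition \ref{good reduction of order 3 example}.

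First compute $\tau^2$ explicitly. From $\tau(P,Q)=(P+Q,-P)$ we get
\[
    \tau^2(P,Q)=\tau(P+Q,-P)=(Q,-P-Q).
\]
So $(1-\tau^2)(P,Q)=(P-Q,\,2Q+P)$, and $(P,Q)\in\ker(1-\tau^2)$ exactly when $P=Q$ and $3P=0$. The morphism $\cal{E}\to\cal{A}$, $P\mapsto(P,P)$, is a closed immersion of group schemes over $\cal{O}_K$. The kernel computation holds functorially on points with values in any $\cal{O}_K$-scheme, so this morphism restricts to an isomorphism $\cal{E}[3]\cong\cal{A}[1-\tau^2]$ of finite flat group schemes over $\cal{O}_K$. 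As a check, both have degree $9$, matching the table in Section \ref{fixed points and degrees}. Scheme-theoretically, the kernel of $1-\tau^2$ is cut out by $P=Q$ and then $3P=0$, so it is exactly the image of $\cal{E}[3]$ under the diagonal.

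An isomorphism of finite flat group schemes over $\cal{O}_K$ respects connected components and \'etale quotients. Hence it induces a $G_K$-equivariant isomorphism from
\[
    0\to \cal{E}[3]^\circ(\overline{K})\to E[3](\overline{K})\to \cal{E}[3](\overline{k})\to 0
\]
to the sequence for $\cal{A}[1-\tau^2]$. In particular, one sequence splits $G_K$-equivariantly if and only if the other does, which proves the proposition.

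The only real obstacle is verifying the scheme-theoretic equality $\ker(1-\tau^2)=\Delta(\cal{E}[3])$ rather than just equality on $\overline{K}$-points. This follows from the functorial computation above, since the relevant equations ($P-Q=0$, $2Q+P=0$) are identities of $T$-points for arbitrary $T$.
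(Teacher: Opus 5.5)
Your proof is correct and rests on the same identification as the paper, $\cal{A}[1-\tau^2]=\Delta(\cal{E}[3])$, with a cleaner scheme-theoretic justification than the paper's ``by calculation''. The only difference is routing. The paper notes that parts (1) and (4) of Theorem~\ref{main theorem} give the same criterion, so $\Kum(A,\tau)$ has good reduction iff $\Kum(A,\tau^2)$ does, and then cites Proposition~\ref{good reduction of order 3 example}; this implicitly uses that $\tau^2(P,Q)=(Q,-P-Q)$ is the inverse of the order-$3$ automorphism $(P,Q)\mapsto(-P-Q,P)$. You instead apply part (4) directly, which is more self-contained.
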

\begin{proof}
    By Theorem \ref{main theorem}, $\Kum(A,\tau)$ has good reduction if and only if $\Kum(A,\tau^2)$ has good reduction. Then this follows from Proposition \ref{good reduction of order 3 example}.
\end{proof}

\subsection{Non-examples}

In this sub-section, we show that the conditions in Theorem \ref{main theorem} are not automatic. For each possible order of $\tau$, we find an elliptic curve $E$ such that, with $A=E\times E$ and the corresponding $\tau$ from Section \ref{toy examples}, $\Kum(A,\tau)$ does not have good reduction --- even if $\omega\in K$ or $i\in K$ (if required by the criteria) and $H^2_{\et}(\Kum(A,\tau)_{\overline{K}},\bb{Q}_\ell)$ is unramified.

Let $S_K$ be the set of exceptional divisors of the minimal desingularization of $A_{\overline{K}}/\tau$. Note that $H^2_{\et}(\Kum(A,\tau)_{\overline{K}},\bb{Q}_\ell)$ is unramified if and only if $S_K$ is unramified by Proposition \ref{cohomology computation}.

\begin{exmp}\label{order 2 counterexample}\cite[Example 6.4]{non-supersingular-kummer}
    Let $K=\bb{Q}_2$ and let $E$ be the elliptic curve over $K$ given by
    \[
        E:y^2+xy=x^3-20x-5.
    \]
    It has good reduction over $K$ and $\cal{E}_k$ is given by
    \[
        \cal{E}_k:y^2+xy=x^3+1.
    \]
    The $2$-torsion points of $\cal{E}_k$ are the origin and $(0,1)$, so $E$ has ordinary good reduction.

    By computation, the $2$-torsion points of $E$ are defined over $\bb{Q}_2(\sqrt{5})$, which is unramified over $\bb{Q}_2$. This gives the desired example in the order $2$, $p=2$ case and the order $6$, $p=2$ case. Also, since $\bb{Q}_2(\sqrt{5})/\bb{Q}_2$ is unramified and $\bb{Q}_2(i)/\bb{Q}_2$ is ramified, $\bb{Q}_2(i,\sqrt{5})/\bb{Q}_2(i)$ is unramified. Then after replacing $K$ with $\bb{Q}_2(i)$, we obtain the example in the order $4$, $p=2$ case.
\end{exmp}

\begin{exmp}[The order $3$, $p=3$ case]\label{order 3 counterexample}
    Let $K=\bb{Q}_3(\omega)$ and let $L/K$ be the unique unramified extension of degree $3$. By Kummer theory, there exists $t\in \cal{O}_K^\times$ such that $L=K(\sqrt[3]{t})$. Let $t'$ be the reduction of $t$ in the residue field $k=\bb{F}_3$.

    Consider the elliptic curve $E$ given by the homogeneous equation
    \[
        E:x^3+y^3+tz^3=xyz
    \]
    with origin $[1:-1:0]$. Then $E$ has good reduction and $\cal{E}_k$ is given by
    \[
        \cal{E}_k:x^3+y^3+t'z^3=xyz.
    \]

    The $3$-torsion points are exactly the points of inflection. By explicit computation, $[x:y:z]$ is a point of inflection if and only if $xyz=0$.
    Then the $3$-torsion points of $E$ and their reductions are given by:
    \begin{align*}
        \rho:E[3](\overline{K}) & \to E[3](\overline{k}),\\
        [1:-\omega^j:0] & \mapsto [1:-1:0],\\
        [\omega^j\sqrt[3]{t}:0:-1] & \mapsto [t':0:-1],\\
        [0:\omega^j\sqrt[3]{t}:-1] & \mapsto [0:t':-1],
    \end{align*}
    where $j\in\{0, 1, 2\}$.
    
    In particular, $E$ has ordinary good reduction, and the $3$-torsion points of $E$ are defined over $L=K(\sqrt[3]{t})$, which is unramified over $K$. This gives the desired example in the order $3$, $p=3$ case and the order $6$, $p=3$ case.
\end{exmp}

\printbibliography

\end{document}